\title[Microlocal properties of bisingular operators]{Microlocal properties of bisingular operators}
\author[M. Borsero]{Massimo Borsero}
\address{Dipartimento di Matematica ``Giuseppe Peano'', Universit\`a degli Studi di Torino, Via Carlo Alberto 10, 10123 Torino (TO), Italy.}
\email{massimo.borsero@unito.it}
\author[R. Schulz]{Ren\'e Schulz}
\address{Mathematisches Institut, Georg-August Universit\"at G\"ottingen, Bunsenstra\ss e 3--5, D--37073 G\"ottingen,
Germany}
\email{rschulz@uni-math.gwdg.de}
\subjclass[2010]{35S05, 35A18, 35A27} 
\keywords{Bisingular operators, Microlocal analysis, Wave front set}
\theoremstyle{definition}
\newtheorem{theo}{Theorem}[section]
\newtheorem{lem}{Lemma}[theo]
\newtheorem{coro}{Corollary}[theo]
\newtheorem{ex}[theo]{Example}
\newtheorem{remark}[theo]{Remark}
\newtheorem{deff}[theo]{Definition}
\newtheorem*{deff*}{Definition}
\newtheorem*{theo*}{Theorem}
\newtheorem{pipi}[theo]{Proposition}
\newcommand \erre{{\mathbb{R}}}
\newcommand \ci{{\mathbb{C}}}
\newcommand \zetapiu{{\mathbb{Z}_+^n}}
\newcommand \diprimo{{\mathcal{D}'}}
\newcommand \eeprimo{{\mathcal{E}'}}
\newcommand \esse{\mathscr{S}}
\newcommand \esseprimo{\mathscr{S}'}
\newcommand \RR{{\mathbb{R}}}
\newcommand \NN{{\mathbb{N}}}
\newcommand \WF{{\mathrm{WF}}}
\newcommand \WFcl{{\mathrm{WF}}_\mathrm{cl}}
\newcommand \Sm{{\mathcal{C}^\infty}}
\newcommand \Smc{{\mathcal{C}_0^\infty}}
\newcommand \Char{{\mathrm{Char}}}
\newcommand \Charcl{{\mathrm{Char}}_{\mathrm{cl}}}
\newcommand \normics{\langle x \rangle}
\newcommand \normxi{\langle \xi \rangle}
\newcommand{\semi}[1]{\langle #1 \rangle}
\newcommand {\WFsg}[1]{{\mathrm{WF}_{\mathrm{SG}}^{#1}}}
\newcommand {\WFbi}[1]{{\mathrm{WF}_{\mathrm{bi}}^{#1}}}
\newcommand {\Charbi}[1]{{\mathrm{Char}_{\mathrm{bi}}^{#1}}}
\newcommand{\bisi}[2]{\mathrm{S}^{#1,#2}}
\newcommand{\elbisi}[2]{\mathrm{L}^{#1,#2}}
\newcommand \SG{{\mathrm{SG}}}
\newcommand \SGL{{\mathrm{LG}}}
\newcommand \dist{{\mathcal{D}^\prime}}
\newcommand{\Feyn}[1]{#1\kern-0.45em/}
\newcommand{\dbar}{d\llap {\raisebox{.9ex}{$\scriptstyle-\!$}}}
\DeclareMathOperator{\supp}{\mathrm{supp}}
\begin{document}

\begin{abstract}
\noindent We study the microlocal properties of bisingular operators, a class of operators on the product of two compact manifolds. We define a wave front set for such operators, and analyse its properties. We compare our wave front set with the $SG$ wave front set, a global wave front set which shares with it formal similarities.
\end{abstract}

\maketitle

\section*{Introduction}
\noindent Bisingular operators were originally introduced by L. Rodino in \cite{Rod75} as a class of operators on a product of two compact manifolds $\Omega_1 \times \Omega_2$, defined as linear and continuous operators $A = Op(a) $ whose symbol satisfies, in local product-type coordinates, the estimate
\begin{equation*}
|D^{\alpha_1}_{\xi_1} D^{\alpha_2}_{\xi_2}D^{\beta_1}_{x_1}D^{\beta_2}_{x_2} a(x_1,x_2,\xi_1,\xi_2)|\leq C_{\alpha_1,\alpha_2,\beta_1,\beta_2} \semi{\xi_1}^{m_1-|\alpha_1|} \semi{\xi_2}^{m_2-|\alpha_2|}.
\end{equation*}
A simple and fundamental example of a bisingular operator is the tensor product $A_1 \otimes A_2$ of two pseudodifferential operators, with symbols in the H\"ormander class, $A_i \in  \mathrm{L}^{m_i}(\Omega_i)$, $i=1,2$, while more complex examples include the vector-tensor product $A_1 \boxtimes A_2$ studied in \cite{Rod75}, and the double Cauchy integral operator studied in \cite{NR06}.
To each symbols of a bisingular operator we can associate two maps
\begin{align*}
\sigma^{1} :&\, \Omega_1 \times \erre^{n_1} \rightarrow \mathrm{L}^{m_2}(\Omega_2) \\
\sigma^{2} :&\, \Omega_2 \times \erre^{n_2} \rightarrow  \mathrm{L}^{m_1}(\Omega_1),
\end{align*}  
and with these maps the bisingular calculus takes the form of a calculus with vector valued symbols. General vector valued calculi have been deeply studied, for example, by B. V. Fedosov, B.-W. Schulze and N. N. Tarkhanov in \cite{FST98}. \newline
\noindent In this paper we deal with bisingular operators whose symbols follow H\"{o}rmander-type estimates (see e.g. \cite{Hor83b}), however a global version of bisingular calculus was defined by U. Battisti, T. Gramchev, S. Pilipovi\'{c} and L. Rodino in \cite{BGPR13}. In particular, we only study operators on compact manifolds, given explicitly in local coordinates. We also note that `product-type' operators calculi, similar to bisingular calculus, were introduced by V. S. Pilidi \cite{Pil73}, R. V. Dudu\v{c}ava \cite{Dud79a}, \cite{Dud79b}, and, more recently, by R. Melrose and F. Rochon \cite{MR06}. Moreover, multisingular calculi were considered by V. S. Pilidi \cite{Pil71} and L. Rodino \cite{Rod80}. \newline
\noindent Applications of bisingular calculus include Index Theory, see e.g. \cite{NR06}, Analitic Number Theory, see e.g. \cite{Bat12}, and Geometric Analysis, see e.g. \cite{GH13}. \newline
The aim of this paper is to study the microlocal properties of bisingular operators. In order to do this, we define a suitable wave front set for such operators, called the \textit{Bi-wave front set}, which is the union of three components
	$$\WFbi{}(u)=\WFbi{1}(u)\cup \WFbi{2}(u)\cup \WFbi{12}(u),$$
$u\in\dist(\Omega_1\times\Omega_2)$. This definition is formulated using the calculus only, and, roughly speaking, has this interpretation: $\WFbi{1}$ takes care of the singularities which, in the wave front space, lie in the axis $\xi_2=0$, $\WFbi{2}$ takes care of the singularities which lie in the axis $\xi_1=0$, and $\WFbi{12}$ of the remaining singularities, which include all the classical ones. Our wave front set is related to the classical H\"{o}rmander wave front set $\WFcl$ (see \cite{Hor83a}) via the following inclusion
\begin{equation*}
\WFcl{u}\cap\left(\Omega_1\times\Omega_2\times(\RR^{n_1}\setminus\{0\})\times(\RR^{n_2}\setminus\{0\})\right) \subset \WFbi{12}(u).
\end{equation*}
Moreover, we have a global regularity result
$$\WFbi{}(u)=\emptyset\Leftrightarrow u\in\Sm(\Omega_1\times\Omega_2),$$
which implies that the bi-wave front set encompasses all the singularities. Our main result is the following:
\begin{pipi}
Let $C$ be a bisingular operator, $u\in\dist(\Omega_1\times\Omega_2)$. Then $$\WFbi{}(Cu)\subset\WFbi{}(u).$$
\end{pipi}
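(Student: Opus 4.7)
The plan is to establish the three separate inclusions
\[
\WFbi{j}(Cu)\subset\WFbi{j}(u)\qquad\text{for } j\in\{1,2,12\},
\]
which together give the claim. Since the bi-wave front set is defined purely in terms of the bisingular calculus, a point $z_0$ fails to belong to $\WFbi{j}(u)$ precisely when there exists an operator $B$ in the calculus which is $j$-elliptic at $z_0$ and for which $Bu$ is regular of type $j$ at $z_0$. My task is therefore, for each $j$ and each $z_0\notin\WFbi{j}(u)$, to construct an operator $A'$ that is $j$-elliptic at $z_0$ and for which $A'Cu$ has the corresponding $j$-regularity; this will imply $z_0\notin\WFbi{j}(Cu)$ and, after varying $j$, prove the proposition.

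Next I would fix $j$ and $z_0\notin\WFbi{j}(u)$, choose a witnessing $B$, and invoke the microlocal parametrix construction of the bisingular calculus to obtain $E$ satisfying $EB=I+R$, where the $j$-component microsupport of $R$ avoids $z_0$. Applied to $u$ this yields $u=EBu-Ru$, hence
\[
Cu = CE(Bu) - CRu.
\]
The first term is $j$-regular at $z_0$ since $Bu$ is, and the composition $CE$ lies in the bisingular calculus, which preserves the relevant microlocal regularity. The second term is $j$-regular at $z_0$ because the composition rule for the bisingular microsupport ensures that the $j$-microsupport of $CR$ still avoids $z_0$. Composing on the left with any $A'$ which is $j$-elliptic at $z_0$ and whose essential support is concentrated in a small conic neighbourhood of $z_0$ then yields the desired regularity of $A'Cu$.

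The main difficulty is the simultaneous bookkeeping of three different ellipticity notions, parametrices and microsupports, one for each stratum $j\in\{1,2,12\}$, and the verification that the bisingular composition rules actually propagate the component-wise regularity. The smoothing remainder $R$ produced by a $j$-parametrix is in general not regularizing in the classical sense, so one has to check that $CR$ still retains the $j$-microlocal property in a form strong enough to imply $j$-regularity of $A'CRu$ at $z_0$. In the case $j=12$ one must further reconcile the bisingular statement with the classical H\"ormander pseudolocality through the inclusion recalled before the proposition, in order to treat points where both $\xi_1\neq 0$ and $\xi_2\neq 0$ uniformly.
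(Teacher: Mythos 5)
Your high-level plan — proving the three inclusions $\WFbi{j}(Cu)\subset\WFbi{j}(u)$ separately by inserting a microlocal parametrix and then composing with a conically localized operator, using the composition formula and disjoint supports to kill the remainder — is exactly the structure of the paper's proof (Lemmas \ref{lem:1microloc} and \ref{lem:12microloc}). For $j=1,2$ what you sketch is essentially what the paper does: extend $A$ to a globally elliptic $A+A'$, take a parametrix $P$ with $P(A+A')=I-R$, and split $(H\otimes I)Cu$ into pieces that are smooth either by the hypothesis on $(A\otimes I)u$, by the disjoint conic supports of $H$ and $A'$ via Theorem~\ref{thm:compos}, or because $R\otimes I\in\mathrm{L}^{-\infty,0}$. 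The remaining vagueness there is a matter of detail, not of substance.

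For $j=12$, however, there is a real gap that your plan does not resolve. The condition $z_0\notin\WFbi{12}(u)$ is not a single regularity statement but the conjunction of the three conditions \eqref{eq:wft31}, \eqref{eq:wft32}, \eqref{eq:wft33}; Remark~\ref{rem:123indep} shows they are independent, and all three must be re-established for $Cu$. Moreover, the remainder coming from a parametrix of $B=A_1\otimes A_2$ splits as $R_1\otimes I + I\otimes R_2 - R_1\otimes R_2$, and the cross terms are not microlocally small in both pairs of variables: $R_1\otimes I$ is the full identity in the $\Omega_2$-variables. Localizing with $H_1\otimes H_2$ and using disjoint supports of $H_1$ and $R_1$ yields only $(H_1\otimes H_2)C(R_1\otimes I)u\in\Sm(\Omega_1,\diprimo(\Omega_2))$, \emph{not} $\Sm(\Omega_1\times\Omega_2)$. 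The paper fixes this by a second parametrix insertion in the $\Omega_2$-variable, after which \eqref{eq:wft33} is invoked on the resulting term. That extra step is the key difficulty of the $12$-case and is missing from your argument. Finally, the proposed fallback of reconciling with H\"ormander pseudolocality through the inclusion $\WFcl\cap\{|\xi_1||\xi_2|\neq 0\}\subset\WFbi{12}$ points the wrong way: Lemma~\ref{lem:wfcl} gives $z_0\notin\WFbi{12}\Rightarrow z_0\notin\WFcl$, whereas to control $\WFbi{12}(Cu)$ one would need the converse, which is false precisely because of the conditions \eqref{eq:wft32}, \eqref{eq:wft33}. So that route cannot close the $12$-component.
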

\noindent This Proposition shows the our wave front set is microlocal with respect to bisingular operators. Then, we define an appropriate notion of characteristic set for bisingular operators, given again as a union of three components, 
\begin{equation*}
\Charbi{}(C) := \Charbi{1}(C) \cup \Charbi{2}(C) \cup \Charbi{12}(C),
\end{equation*}     
and with this notion we can get a microellipticity result for the $1$- and $2$-components of the bi-wave front set
\begin{pipi}
Let $C$ be a bisingular operator, $u\in\dist(\Omega_1\times\Omega_2)$. Then
\begin{equation*}
\WFbi{i}(u) \subseteq \Charbi{i}(C) \cup \WFbi{i}( C u),
\end{equation*}
$i=1,2$.
\end{pipi}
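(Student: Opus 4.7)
The plan is to follow the standard microelliptic-parametrix strategy, suitably adapted to the vector-valued nature of bisingular calculus and to the $i$-component of the bi-wave front set.

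Fix $i \in \{1,2\}$ and a point $(z_0,\zeta_0)$ not lying in $\Charbi{i}(C) \cup \WFbi{i}(Cu)$. The first step is to construct a microlocal parametrix $B$ for $C$ adapted to the $i$-component, that is, a bisingular operator such that
\begin{equation*}
BC = I + R,
\end{equation*}
where $R$ is bisingular with $(z_0,\zeta_0) \notin \WFbi{i}(Rv)$ for every $v\in\dist(\Omega_1\times\Omega_2)$. Ellipticity at $(z_0,\zeta_0)$ in the $i$-sense, i.e.\ $(z_0,\zeta_0) \notin \Charbi{i}(C)$, should be precisely the condition that the $i$-th operator-valued principal symbol $\sigma^{i}(C)$ is invertible modulo terms which are regularizing in the $i$-component. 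Starting from such an inverse on a conic neighborhood $\Gamma \ni (z_0,\zeta_0)$, cut off outside $\Gamma$, I would build $B$ by the usual asymptotic Neumann expansion.

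With $B$ in hand, I write $u = BCu - Ru$. By the microlocality result (the previous Proposition) applied to $B$,
\begin{equation*}
\WFbi{}(BCu) \subseteq \WFbi{}(Cu),
\end{equation*}
and projecting onto the $i$-component, combined with the support property of the cutoff used to produce $B$, yields $(z_0,\zeta_0) \notin \WFbi{i}(BCu)$. By the construction of the parametrix, $(z_0,\zeta_0) \notin \WFbi{i}(Ru)$ as well. Hence $(z_0,\zeta_0) \notin \WFbi{i}(u)$, which is the desired contrapositive.

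The main obstacle will be the parametrix construction itself, which is much more delicate than in the scalar H\"ormander setting. Since the bisingular calculus is, via the maps $\sigma^{1}, \sigma^{2}$, a calculus with operator-valued symbols, inverting $\sigma^{1}(C)(x_1,\xi_1)$ at a point of the $1$-component is not a numerical inversion but an operator equation in $\elbisi{m_2}{}(\Omega_2)$ modulo lower-order perturbations, and analogously for $i=2$. I expect this to be handled by a two-level asymptotic expansion: an outer Neumann series exploiting $\xi_i$-ellipticity, and an inner use of the standard pseudodifferential calculus on $\Omega_j$ ($j \neq i$) to invert the operator-valued principal symbol modulo smoothing on $\Omega_j$. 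Particular care must be taken so that the error terms generated by the asymptotic summation and the cutoff all land in classes that are regularizing in the $i$-sense, which is exactly what is needed to guarantee $(z_0,\zeta_0) \notin \WFbi{i}(Rv)$. Once the parametrix is in place, the remainder of the argument is, as in the classical situation, essentially formal.
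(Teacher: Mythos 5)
Your plan follows the same parametrix-plus-microlocality strategy the paper uses, and it would succeed, but the precise parametrix identity you aim for differs from the one the paper constructs in a way that is worth flagging. You propose to build $B$ with $BC = I + R$, where $R$ is only \emph{microlocally} $i$-regularizing near the base point; the paper's Lemma 3.17 instead produces a \emph{localized} one-sided parametrix $H$ satisfying
\begin{equation*}
HC = A\otimes I - R, \qquad R \in \elbisi{-\infty}{0},
\end{equation*}
with $A\in\mathrm{L}^0(\Omega_1)$ a conic cut-off non-characteristic at $(x_1,\xi_1)$ and supported in the cone $\Gamma$ on which the non-$1$-characteristic condition holds. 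This is not merely a notational variant: it means the error $R$ is \emph{globally} smoothing in the $\Omega_1$-direction, so that by Lemma 2.6, $Rv\in\Sm(\Omega_1,\diprimo(\Omega_2))$ for \emph{any} $v$, and one need not prove a separate microlocal statement about $\WFbi{1}(Rv)$. In your version the claim ``$(z_0,\zeta_0)\notin\WFbi{i}(Rv)$ for every $v$'' is essentially what must be verified, and establishing it requires exactly the same bookkeeping (the interplay between the operator-valued inversion of $\sigma^2(C)(x_2,\xi_2)$ modulo $\elbisi{-\infty}{0}$, the invertibility of $\sigma^1(C)$ from condition (2) of $\Charbi{1}$, and the excision functions $\psi_1,\psi_2$) that the paper carries out explicitly; deferring all of it to ``asymptotic Neumann expansion'' understates where the real work is. The final step also differs slightly: rather than decomposing $u=BCu-Ru$ and invoking subadditivity of the wave front set, the paper computes $(A^2\otimes I)u = (A\otimes I)HCu + (A\otimes I)Ru$ and checks directly that this lies in $\Sm(\Omega_1,\diprimo(\Omega_2))$, since $A^2$ is still non-characteristic at the point; this verifies Definition 3.1 directly and avoids needing $R$ to vanish microlocally. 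Both routes are sound, but the $A\otimes I$-formulation buys cleaner error classes and a shorter endgame.
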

\noindent We note strong formal similarities between the bisingular calculus and the so called $SG$-calculus, introduced on $\RR^n$ by H. O. Cordes \cite{Cor95} and C. Parenti \cite{Par79}, see also R. Melrose \cite{Mel95}, Y. Egorov and B.-W. Schulze \cite{ES97}, and E. Schrohe \cite{Sch87}. For this reason, our wave front set has formal connections to and similar features as the global $SG$-wave front set, or $\mathscr{S}$-wave front set, introduced by S. Coriasco and L. Maniccia \cite{CM03}, see also R. Melrose \cite{Mel94} for a geometric scattering version. \newline
\noindent The paper is organized as follows. In Section \ref{sec:1} we fix some notation and briefly review the bisingular calculus. In Section \ref{sec:2}, following the ideas in \cite{Hor83a} and \cite{GS94}, we study the mapping properties of bisingular operators and their microlocal properties with respect to the classical wave front set $\WFcl$. In Section \ref{sec:3} we define the bi-wave front set and state the main results concercing microlocality and microellipticity of bisingular operators. In Sections \ref{sec:4} we compare the bisingular calculus and the $SG$ calculus, focusing on the relations and differences between the bi-wave front set and the $SG$ wave front set.
%
\subsection*{Acknowledgements}
%
We are grateful to Profs. D. Bahns, U. Battisti, S. Coriasco, L. Rodino, I. Witt for valuable advice and constructive criticism.
\noindent This work was supported by the German Research Foundation (Deutsche Forschungsgemeinschaft, DFG) through the Institutional Strategy of the University of G\"ottingen, in particular through the research training group GRK 1493 and the Courant Research Center ``Higher Order Structures in Mathematics''. \\
The second author is also grateful for the support received by the Studienstiftung des Deutschen Volkes and the German Academic Exchange Service (DAAD), as part of this collaboration was funded within the framework of a ``DAAD Doktorandenstipendium''.
\section{Preliminaries} \label{sec:1}
\subsection{Introduction to bisingular calculus}
In this section we will recall the main definitions and properties of bisingular symbols and bisingular operators with homogeneous principal symbols. For the H\"{o}rmander pseudodifferential operators, whose tensor products provide the model example of bisingular operators, we use the notations from \cite{Hor83b}. $\Omega_i,\, i=1,2$, denotes an open domain of $\RR^{n_i}$.
\begin{deff}
$\bisi{m_1}{m_2}(\Omega_1,\Omega_2)$ is the set of $\mathcal{C}^\infty(\Omega_1 \times \Omega_2 \times \RR^{n_1} \times \RR^{n_2})$ functions such that, for all multiindex $\alpha_i, \beta_i$ and for all compact subsets $K_i \subset \subset \Omega_i,\,i=1,2$, there exists a constant $C_{\alpha_1,\alpha_2,\beta_1,\beta_2,K_1,K_2} > 0$ such that
\begin{equation*}
|D^{\alpha_1}_{\xi_1} D^{\alpha_2}_{\xi_2}D^{\beta_1}_{x_1}D^{\beta_2}_{x_2} a(x_1,x_2,\xi_1,\xi_2)|\leq C_{\alpha_1,\alpha_2,\beta_1,\beta_2,K_1,K_2} \semi{\xi_1}^{m_1-|\alpha_1|} \semi{\xi_2}^{m_2-|\alpha_2|},
\end{equation*}
for all $x_i \in K_i,\ \xi_i \in \RR^{n_i}$. As usual, $\normxi := (1+|\xi|^2)^{\frac{1}{2}}$. An element of $\mathrm{S}^{m_1,m_2}(\Omega_1,\Omega_2)$ is called a symbol.
\end{deff}
\begin{deff}
A linear operator $A : \mathcal{C}^\infty_0(\RR^{n_1+n_2}) \rightarrow \mathcal{C}^\infty(\RR^{n_1+n_2})$ is called a bisingular operator if it can be written in the form
\begin{equation*}
\begin{split}
A(u)&(x_1,x_2) = Op(a)[u](x_1,x_2)
\\ &=\frac{1}{(2\pi)^{n_1+n_2}}\int_{\RR^{n_1}} \int_{\RR^{n_2}} e^{i(x_1 \cdot \xi_1 + x_2 \cdot \xi_2)}\,a(x_1,x_2,\xi_1,\xi_2)\hat{u}(\xi_1,\xi_2)\,d\xi_1 d\xi_2,
\end{split}
\end{equation*}
where $a \in \bisi{m_1}{m_2}(\Omega_1,\Omega_2)$ and $\hat{u}$ denotes the Fourier transform of $u$. \newline
\noindent $\elbisi{m_1}{m_2}(\Omega_1,\Omega_2)$ denotes the set of all bisingular operators with symbol in $\bisi{m_1}{m_2}(\Omega_1,\Omega_2)$. Moreover, we set 
\begin{equation*}
\begin{split}
\bisi{\infty}{\infty}(\Omega_1,\Omega_2) &:= \bigcup_{m_1,m_2} \bisi{m_1}{m_2}(\Omega_1,\Omega_2) \\
\bisi{-\infty}{-\infty}(\Omega_1,\Omega_2) &:= \bigcap_{m_1,m_2} \bisi{m_1}{m_2}(\Omega_1,\Omega_2)
\end{split}
\end{equation*}
and by  $\elbisi{\infty}{\infty}(\Omega_1,\Omega_2),\, \elbisi{-\infty}{-\infty}(\Omega_1,\Omega_2)$ the corresponding class of operators. The operators in $\elbisi{-\infty}{-\infty}(\Omega_1,\Omega_2)$ are called \textit{smoothing operators}.
\end{deff}
\noindent We associate to every $a \in \bisi{m_1}{m_2}(\Omega_1,\Omega_2)$ the two maps
\begin{align*}
A^{1} :&\, \Omega_1 \times \erre^{n_1} \rightarrow \mathrm{L}^{m_2}(\Omega_2) \\
& (x_1, \xi_1) \mapsto a(x_1,x_2,\xi_1,D_2),\\
A^{2} :&\, \Omega_2 \times \erre^{n_2} \rightarrow  \mathrm{L}^{m_1}(\Omega_1) \\
& (x_2, \xi_2) \mapsto a(x_1,x_2,D_1,\xi_2),\\
\end{align*}
and for $a \in \bisi{m_1}{m_2}(\Omega_1,\Omega_2),\, b \in \bisi{p_1}{p_2}(\Omega_1,\Omega_2)$ we set (for fixed $x_1,\ x_2$ respectively)
\begin{align*}
a \circ_1 b (x_1,x_2,\xi_1, D_2) &:= (A^{1} \circ B^{1}) (x_1,x_2,\xi_1, D_2) \in \mathrm{L}^{m_2+p_2}(\Omega_2) \\
a \circ_2 b (x_1,x_2,D_1, \xi_2) &:= (A^{2} \circ B^{2}) (x_1,x_2,D_1, \xi_2) \in \mathrm{L}^{m_1+p_1}(\Omega_1).
\end{align*}
\begin{deff}
Let $a \in \bisi{m_1}{m_2}(\Omega_1,\Omega_2)$. Then $a$ has a homogeneous principal symbol if
\begin{enumerate}[i)]
\item there exists $a_{m_1 ; \cdot } \in \bisi{m_1}{m_2}(\Omega_1,\Omega_2)$ such that
\begin{align*}
a_{m_1 ; \cdot }(x_1,x_2,t \xi_1,\xi_2) &= t^{m_1} a_{m_1 ; \cdot }(x_1,x_2,\xi_1,\xi_2), \,\,\forall x_1,x_2,\xi_2,\,\, \forall |\xi_1| > 1, t>0, \\
a-\psi_1(\xi_1)a_{m_1 ; \cdot } &\in \bisi{m_1-1}{m_2},
\end{align*}
where $\psi_1$ is an $0$-excision function. 
Moreover, $a_{m_1 ; \cdot } (x_1,x_2,\xi_1,D_2) \in \mathrm{L}^{m_2}_{\mathrm{cl}}(\Omega_2)$, so, being a classical symbol on $\Omega_2$, it admits an asymptotic expansion with respect to the $\xi_2$ variable.
\item there exists $a_{\cdot ; m_2 } \in \bisi{m_1}{m_2}(\Omega_1,\Omega_2)$ such that
\begin{align*}
a_{\cdot ; m_2 }(x_1,x_2,\xi_1,t\xi_2) &= t^{m_2} a_{\cdot ; m_2 }(x_1,x_2,\xi_1,\xi_2), \,\,\forall x_1,x_2,\xi_1,\,\, \forall |\xi_2| > 1, t>0, \\
a-\psi_2(\xi_2)a_{\cdot ; m_2 } &\in \bisi{m_1}{m_2-1},\,\, \psi_2 \text{ as $\psi_1$ above.} 
\end{align*}
Moreover, $a_{\cdot ; m_2 } (x_1,x_2,D_1,\xi_2) \in \mathrm{L}^{m_1}_{\mathrm{cl}}(\Omega_1)$, so, being a classical symbol on $\Omega_1$, it admits an asymptotic expansion with respect to the $\xi_1$ variable.
\item
the symbols $a_{m_1 ; \cdot }$ and $a_{\cdot ; m_2 }$ have the same leading term, so there exists $a_{m_1 ; m_2 }$ such that
\begin{align*}
a_{m_1 ; \cdot } - \psi_2(\xi_2) a_{m_1 ; m_2} &\in \mathrm{S}^{m_1,m_2-1}(\Omega_1,\Omega_2), \\
a_{\cdot ; m_2 } - \psi_1(\xi_1) a_{m_1 ; m_2} &\in \mathrm{S}^{m_1-1,m_2}(\Omega_1,\Omega_2), 
\end{align*}
and
\begin{equation*}
a-\psi_1 a_{m_1 ; \cdot}-\psi_2 a_{\cdot ; m_2} +\psi_1\psi_2 a_{m_1 ; m_2}\in \mathrm{S}^{m_1-1,m_2-1}(\Omega_1,\Omega_2).
\end{equation*}
\end{enumerate}
The symbols which admit a full bi-homogeneous expansion in $\xi_1$ and $\xi_2$ are called \textit{classical symbols}, their class is denoted by $\mathrm{S}^{m_1,m_2}_{\mathrm{cl}}(\Omega_1,\Omega_2)$, and the corresponding operator class by $\mathrm{L}^{m_1,m_2}_{\mathrm{cl}}(\Omega_1,\Omega_2)$.
\end{deff}
\noindent The previous Definition implies that, given $A 	\in \mathrm{L}^{m_1,m_2}_{\mathrm{cl}}(\Omega_1,\Omega_2)$, we can define maps $\sigma^{1},\,\sigma^{2},\,\sigma^{12}$ in this way
\begin{align*}
\sigma^{1}(A) :&\, T^*\Omega_1 \setminus 0 \rightarrow  \mathrm{L}^{m_2}_{\mathrm{cl}}(\Omega_2) \\
& (x_1, \xi_1) \mapsto a_{m_1 ; \cdot} (x_1,x_2,\xi_1,D_2),\\
\sigma^{2}(A) :&\, T^*\Omega_2 \setminus 0 \rightarrow  \mathrm{L}^{m_1}_{\mathrm{cl}}(\Omega_1) \\
& (x_2, \xi_2) \mapsto a_{\cdot ; m_2} (x_1,x_2,D_1,\xi_2),\\
\sigma^{12}(A) :&\, (T^*\Omega_1 \setminus 0) \times (T^*\Omega_2 \setminus 0)  \rightarrow  \ci \\
& (x_1, x_2, \xi_1, \xi_2) \mapsto a_{m_1 ; m_2} (x_1,x_2,\xi_1,\xi_2),
\end{align*}
such that, denoting by $\sigma(P)(x,\xi)$ the principal symbol of a pseudodifferential operator $P$, we have 
\begin{equation*}
\begin{split}
\sigma(\sigma^{1}(A)(x_1,\xi_1) )(x_2,\xi_2) &= \sigma(\sigma^{2}(A)(x_2,\xi_2) )(x_1,\xi_1) \\
&= \sigma^{12}(A)(x_1, x_2, \xi_1, \xi_2) = a_{m_1 ; m_2} (x_1,x_2,\xi_1,\xi_2).  
\end{split}
\end{equation*}
We call the couple $(\sigma^{1}(A), \sigma^{2}(A))$ the \textit{principal symbol} of $A$. \newline
\noindent In the following, we only consider bisingular operators on the product of two compact manifolds $\Omega_1, \Omega_2$. They are defined as above in local coordinates. For those, there exists a notion of ellipticity, called bi-ellipticity. For more details, see \cite{Rod75}.
\begin{deff}\label{def:biell}
Let $A 	\in \mathrm{L}^{m_1,m_2}_{\mathrm{cl}}(\Omega_1,\Omega_2)$. We say that $A$ is bi-elliptic if
\begin{enumerate}
\item[i)] $\sigma^{12}(A)(v_1,v_2) \neq 0$ for all $(v_1,v_2) \in (T^*\Omega_1 \setminus 0) \times (T^*\Omega_2 \setminus 0)$;
\item[ii)]$\sigma^{1}(A)(v_1)$ is exactly invertible as an operator in $\mathrm{L}^{m_2}_{\mathrm{cl}}(\Omega_2)$ for all $v_1 \in T^*\Omega_1 \setminus 0$, with inverse in $\mathrm{L}^{-m_2}_{\mathrm{cl}}(\Omega_2)$;  
\item[iii)]$\sigma^{2}(A)(v_2)$ is exactly invertible as an operator in $\mathrm{L}^{m_1}_{\mathrm{cl}}(\Omega_1)$ for all $v_2 \in T^*\Omega_2 \setminus 0$, with inverse in $\mathrm{L}^{-m_1}_{\mathrm{cl}}(\Omega_1)$. 
\end{enumerate}
\end{deff}
\noindent To elaborate on this definition, we give some examples.
\begin{ex}
Consider the differential operator
\begin{equation*}
A = \sum_{\substack{|\beta_1| \leq m_1 \\ |\beta_2| \leq m_2}} c_{\beta_1 , \beta_2}(x_1,x_2) D^{\beta_1}_1 D^{\beta_2}_2,
\end{equation*}
with $\Sm$ coefficients. In this case
\begin{align}
\sigma^{1}(A)(x_1,\xi_1) &= \sum_{\substack{|\beta_1| = m_1 \\ |\beta_2| \leq m_2}} c_{\beta_1 , \beta_2}(x_1,x_2) \xi^{\beta_1}_1 D^{\beta_2}_2  \label{laprima}\\
\sigma^{2}(A)(x_2,\xi_2) &= \sum_{\substack{|\beta_1| \leq m_1 \\ |\beta_2| = m_2}} c_{\beta_1 , \beta_2}(x_1,x_2) D^{\beta_1}_1 \xi^{\beta_2}_2. \label{laseconda}
\end{align}
\noindent A full bi-homogeneous expansion is given by
\begin{equation*}
\tilde \sigma^{i , j}(A)(x_1,x_2,\xi_1,\xi_2) = \sum_{\substack{|\beta_1| = i \\ |\beta_2| = j}} c_{\beta_1 , \beta_2}(x_1,x_2) \xi^{\beta_1}_1 \xi^{\beta_2}_2.
\end{equation*} 
\noindent The bi-ellipticity of $A$ is given by the condition  $\sigma^{12}(A)=\tilde \sigma^{m_1,m_2}(A)(v_1,v_2) \neq 0$ for all $(v_1,v_2) \in (T^*\Omega_1 \setminus 0) \times (T^*\Omega_2 \setminus 0)$ and the invertibility of the two maps \eqref{laprima} and \eqref{laseconda}. \newline
\noindent We may give a global meaning to $A$ on a product of compact manifolds $\Omega_1 \times \Omega_2$ by taking, for example, $\Omega_j = \mathbb{T}_j$, the $n_j$-dimensional torus, $j=1,2$, and $x_j$ angular coordinates on $\mathbb{T}_j$.
\end{ex}
\noindent With this in mind, it is possible to study some model cases of operators of the form $A\otimes B$. In the following Table \ref{tab:exbisi} we mean by $\Psi$DO the classical pseudodifferential operators on $\Omega_1 \times \Omega_2$, and by $\Psi$DO-order and $\Psi$DO-ell. their order and ellipticity, respectively.
\begin{table}[!ht]
\renewcommand{\arraystretch}{1.2}
\begin{center}
\begin{tabular}{|c|c|c|c|c|}
\hline 
Operator & $\Psi$DO-order & $\Psi$DO-ell. & Bi-order & Bi-ell.  \\
\hline
$I\otimes I$ & $0$ & $\surd$ & $(0,0)$ & $\surd$ \\ 
\hline 
$-\Delta_1 \otimes I+I\otimes (-\Delta_2)$ & $2$ & $\surd$ & $(2,2)$ & $\times$ \\ 
\hline 
$-\Delta_1 \otimes (-\Delta_2) $ & $4$ & $\times$ & $(2,2)$ & $\times$ \\ 
\hline 
$-\Delta_1 \otimes (-\Delta_2+I) $ & $4$ & $\times$ & $(2,2)$ & $\times$ \\  
\hline 
$(-\Delta_1+I) \otimes (-\Delta_2+I) $ & $4$ & $\times$ & $(2,2)$ & $\surd$ \\  
\hline 
$(-\Delta_1+I)^{-1} \otimes (-\Delta_2+I)^{-1} $ & not a $\Psi$DO &  & $(-2,-2)$ & $\surd$ \\ 
\hline 
\end{tabular} 
\end{center}
\label{tab:exbisi}
\caption{Some model cases of bisingular operators of tensor product type}
\end{table}
\begin{theo}
Let $A 	\in \mathrm{L}^{m_1,m_2}_{\mathrm{cl}}(\Omega_1,\Omega_2)$ be bi-elliptic. Then there exists $B \in \mathrm{L}^{-m_1,-m_2}_{\mathrm{cl}}(\Omega_1,\Omega_2)$ such that
\begin{align*}
AB &= I + K_1\\
BA &= I + K_2,
\end{align*}
where $I$ is the identity map and $K_1,\,K_2$ are smoothing operators. Moreover the principal symbol of $B$ is $(\sigma^{1}(A)^{-1}, \sigma^{2}(A)^{-1})$.
\end{theo}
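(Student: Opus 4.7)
The plan is to mimic the classical elliptic parametrix construction, adapted to accommodate the two-fold asymptotic structure of the bisingular calculus; the simultaneous reduction of both bi-orders is the main technical point.

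First, I would build a leading approximate inverse $B_0 \in \mathrm{L}^{-m_1,-m_2}_{\mathrm{cl}}(\Omega_1,\Omega_2)$ by prescribing its principal symbols as
$$\sigma^{1}(B_0)(v_1) := \sigma^{1}(A)(v_1)^{-1} \in \mathrm{L}^{-m_2}_{\mathrm{cl}}(\Omega_2), \qquad \sigma^{2}(B_0)(v_2) := \sigma^{2}(A)(v_2)^{-1} \in \mathrm{L}^{-m_1}_{\mathrm{cl}}(\Omega_1),$$
the two inverses being furnished by conditions (ii) and (iii) of Definition \ref{def:biell}. These prescriptions are compatible because, by condition (i), $\sigma^{12}(A)$ is everywhere non-vanishing and both candidates share the bi-homogeneous leading term $1/\sigma^{12}(A)$, as one sees by passing to principal symbols in the identities $\sigma^{1}(A)\sigma^{1}(B_0) = I$ and $\sigma^{2}(A)\sigma^{2}(B_0) = I$ and using the consistency relation for $\sigma^{12}$. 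Using the excision-function construction of classical bisingular symbols, one then assembles a genuine $b_0 \in \mathrm{S}^{-m_1,-m_2}_{\mathrm{cl}}$ realising these data.

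Second, I would analyse the remainder $R := I - AB_0$. By the composition rule in the bisingular calculus, $\sigma^{1}(AB_0) = \sigma^{1}(A)\sigma^{1}(B_0) = I$ and, analogously, $\sigma^{2}(AB_0) = I$, so both axial principal symbols of $R$ vanish. Hence $R \in \mathrm{L}^{-1,0}_{\mathrm{cl}} \cap \mathrm{L}^{0,-1}_{\mathrm{cl}}$, though not a priori in $\mathrm{L}^{-1,-1}_{\mathrm{cl}}$. The key observation is that composition adds bi-orders: picking up the $\mathrm{L}^{-1,0}_{\mathrm{cl}}$-membership on one factor and the $\mathrm{L}^{0,-1}_{\mathrm{cl}}$-membership on the other yields $R^{2} \in \mathrm{L}^{-1,-1}_{\mathrm{cl}}$, and an induction shows $R^{n} \in \mathrm{L}^{-\lfloor n/2 \rfloor,-\lceil n/2 \rceil}_{\mathrm{cl}}$. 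Thus both bi-orders of $R^{n}$ tend to $-\infty$ as $n \to \infty$.

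Third, I would Borel-sum the formal Neumann series. Setting $B_{N} := B_{0}\sum_{n=0}^{N} R^{n}$, a direct computation gives $AB_{N} - I = -R^{N+1}$, whose bi-order is arbitrarily negative by the previous step. Asymptotic summation within $\mathrm{L}^{-m_1,-m_2}_{\mathrm{cl}}$ then produces $B \in \mathrm{L}^{-m_1,-m_2}_{\mathrm{cl}}$ with $AB - I \in \mathrm{L}^{-\infty,-\infty}_{\mathrm{cl}}$. Running the same construction on the left yields $B'$ with $B'A - I$ smoothing, and the standard manipulation $B' \equiv B'(AB - K_{1}) \equiv (B'A)B \equiv B$ modulo smoothing operators shows that $B$ itself serves as a two-sided parametrix. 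The principal symbol of $B$ coincides with that of $B_{0}$ by construction, yielding the claimed formula $(\sigma^{1}(A)^{-1},\sigma^{2}(A)^{-1})$. I expect the main obstacle to lie in the third step: one must verify that the bisingular calculus supports asymptotic sums with remainders decaying simultaneously in both $\xi_1$ and $\xi_2$, so that the limiting remainder truly lies in $\mathrm{L}^{-\infty,-\infty}_{\mathrm{cl}}$ and not merely in an intersection of axial ideals. The staircase behaviour of $R^{n}$ established in step two is precisely what makes this possible.
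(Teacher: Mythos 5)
Your proposal is correct and is essentially the standard Neumann-series parametrix construction that the paper (following \cite{Rod75}) relies upon; the paper itself offers no inline proof for this theorem. One small streamlining: since you have already verified that $\sigma^{1}(R)$, $\sigma^{2}(R)$, and $\sigma^{12}(R)$ all vanish (the last because $\sigma^{12}(AB_0)=\sigma^{12}(A)/\sigma^{12}(A)=1$), condition (iii) in the definition of classical bisingular symbols immediately gives $R\in\mathrm{L}^{-1,-1}_{\mathrm{cl}}$, so the staircase estimate $R^n\in\mathrm{L}^{-\lfloor n/2\rfloor,-\lceil n/2\rceil}_{\mathrm{cl}}$ can be replaced by the diagonal $R^n\in\mathrm{L}^{-n,-n}_{\mathrm{cl}}$ and the asymptotic summation in step three is then the one already built into the composition formula of Theorem \ref{thm:compos}.
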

\noindent From now on we will assume, for simplicity, that symbols of bisingular operators have compact support in the $x_1, x_2$ variables.
\begin{theo}\label{thm:compos}
Let $a \in \bisi{m_1}{m_2}(\Omega_1,\Omega_2), b \in \bisi{p_1}{p_2}(\Omega_1,\Omega_2)$. Then $AB \in \mathrm{L}^{m_1+p_1,m_2+p_2}(\Omega_1,\Omega_2)$, and its symbol $c(x_1,x_2,\xi_1,\xi_2)$ has the asymptotic expansion
\begin{equation*}
c \sim \sum_{j=0}^{\infty} c_{m_1+p_1-j,m_2+p_2-j}
\end{equation*}
where
\begin{equation*}
\begin{split}
c_{m_1+p_1-j,m_2+p_2-j} =& \ c^1_{m_1+p_1-j-1,m_2+p_2-j} + c^2_{m_1+p_1-j,m_2+p_2-j-1} \\
&+ c^{12}_{m_1+p_1-j,m_2+p_2-j}
\end{split}
\end{equation*}
and
\begin{align*}
c^1&_{m_1+p_1-j-1,m_2+p_2-j}\\ 
&= \sum_{|\alpha_2| = j} \frac{1}{\alpha_2!} \left\{ \partial^{\alpha_2}_{\xi_2} a \circ_1 D^{\alpha_2}_{x_2} b - \sum_{|\alpha_1| \leq j} \frac{1}{\alpha_1 !} \partial^{\alpha_1}_{\xi_1} \partial^{\alpha_2}_{\xi_2} a\, D^{\alpha_1}_{x_1} D^{\alpha_2}_{x_2} b \right\} \\
c^2&_{m_1+p_1-j,m_2+p_2-j-1} \\
&= \sum_{|\alpha_1| = j} \frac{1}{\alpha_1!} \left\{ \partial^{\alpha_1}_{\xi_1} a \circ_2 D^{\alpha_1}_{x_1} b - \sum_{|\alpha_2| \leq j} \frac{1}{\alpha_2 !} \partial^{\alpha_1}_{\xi_1} \partial^{\alpha_2}_{\xi_2} a\, D^{\alpha_1}_{x_1} D^{\alpha_2}_{x_2} b \right\} \\
 c^{12}&_{m_1+p_1-j,m_2+p_2-j} \\
 &= \sum_{|\alpha_1| = |\alpha_2| = j}  \frac{1}{\alpha_1 ! \alpha_2 !} \partial^{\alpha_1}_{\xi_1} \partial^{\alpha_2}_{\xi_2} a\, D^{\alpha_1}_{x_1} D^{\alpha_2}_{x_2} b
\end{align*}
\end{theo}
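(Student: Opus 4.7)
The plan is to obtain the symbol of the composition via the classical pseudodifferential oscillatory integral and then reorganize the resulting expansion in accordance with the bi-graded structure of the bisingular calculus. First I would write, using the standing compact $(x_1,x_2)$-support assumption,
\[
c(x,\xi) = \frac{1}{(2\pi)^{n_1+n_2}} \iint e^{-i y \cdot \eta}\, a(x,\xi+\eta)\,b(x+y,\xi)\,dy\,d\eta,
\]
with $y = (y_1, y_2)$ and $\eta = (\eta_1, \eta_2)$. A Taylor expansion in $\eta$ coupled with integration by parts in $y$ then produces the formal doubly-indexed series
\[
c(x,\xi) \sim \sum_{\alpha_1,\alpha_2} \frac{1}{\alpha_1!\,\alpha_2!}\, \partial^{\alpha_1}_{\xi_1}\partial^{\alpha_2}_{\xi_2} a \cdot D^{\alpha_1}_{x_1} D^{\alpha_2}_{x_2} b,
\]
each term of which lies in $\bisi{m_1+p_1-|\alpha_1|}{m_2+p_2-|\alpha_2|}$.

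Next I would show that this series is asymptotic in the bi-graded sense, namely that grouping terms by $j := \min(|\alpha_1|,|\alpha_2|)$ and truncating at $j < N$ leaves a remainder lying in $\bisi{m_1+p_1-N}{m_2+p_2-N}$. To match the three-part decomposition stated in the theorem, I would split level $j$ as follows. The diagonal subset $|\alpha_1|=|\alpha_2|=j$ yields immediately $c^{12}_{m_1+p_1-j,m_2+p_2-j}$. For the off-diagonal subset with $|\alpha_2|=j$ and $|\alpha_1|>j$, I would recognize the infinite sum $\sum_{|\alpha_1|>j}\frac{1}{\alpha_1!}\partial^{\alpha_1}_{\xi_1}\partial^{\alpha_2}_{\xi_2}a\cdot D^{\alpha_1}_{x_1}D^{\alpha_2}_{x_2}b$ as the tail, beyond the order-$j$ truncation in $|\alpha_1|$, of the expansion of $\partial^{\alpha_2}_{\xi_2}a\circ_1 D^{\alpha_2}_{x_2}b$, the latter being viewed as an operator-valued $\Psi$DO composition in the $(x_1,\xi_1)$-variables with values in $\mathrm{L}^*(\Omega_2)$. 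Summing over $|\alpha_2|=j$ with the factor $1/\alpha_2!$ then reproduces $c^1_{m_1+p_1-j-1,m_2+p_2-j}$, and the symmetric argument with $1\leftrightarrow 2$ yields $c^2_{m_1+p_1-j,m_2+p_2-j-1}$.

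The main technical obstacle is the bi-graded remainder estimate: the standard pseudodifferential composition theorem only guarantees improvement in a single combined order, whereas we need simultaneous improvement in both $\xi_1$- and $\xi_2$-orders. To secure this I would perform the Taylor expansion in two stages \textemdash{} first in $\eta_2$ to order $N$ and then in $\eta_1$ to order $N$ \textemdash{} controlling each remainder piece through bisingular seminorm estimates with the product weights $\semi{\xi_1}^{\cdot}\semi{\xi_2}^{\cdot}$. The Leibniz-type arguments of \cite{Hor83b} adapted to this product-weight setting, together with the verification that the mixed remainders (improvement only in $\xi_1$ or only in $\xi_2$) can be absorbed into the $c^1$ and $c^2$ pieces of the expansion, constitute the bulk of the technical work.
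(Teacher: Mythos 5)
The paper does not contain a proof of this theorem. It appears in Section~\ref{sec:1} among the recalled facts of bisingular calculus from \cite{Rod75}, and is stated without argument; so there is no in-paper proof to compare against, and your proposal has to be judged on its own.

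On that basis, your outline is a sound reconstruction of the standard argument. The oscillatory-integral formula, the formal double expansion $c\sim\sum_{\alpha_1,\alpha_2}\tfrac{1}{\alpha_1!\alpha_2!}\partial^{\alpha_1}_{\xi_1}\partial^{\alpha_2}_{\xi_2}a\,D^{\alpha_1}_{x_1}D^{\alpha_2}_{x_2}b$, and the regrouping by $j=\min(|\alpha_1|,|\alpha_2|)$ are all correct, and that regrouping is precisely what reproduces the three-part decomposition in the statement: the diagonal $|\alpha_1|=|\alpha_2|=j$ gives $c^{12}$, while the $\circ_1$- and $\circ_2$-remainders collect the tails $|\alpha_1|>j=|\alpha_2|$ and $|\alpha_2|>j=|\alpha_1|$ respectively. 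A useful bookkeeping identity you leave implicit, and which would sharpen the write-up, is
\[
\sum_{j<N}c_{m_1+p_1-j,\,m_2+p_2-j}\;=\;T^1_N+T^2_N-T^{12}_N,
\]
where $T^1_N:=\sum_{|\alpha_2|<N}\tfrac{1}{\alpha_2!}(\partial^{\alpha_2}_{\xi_2}a)\circ_1(D^{\alpha_2}_{x_2}b)$, $T^2_N$ is its $1\leftrightarrow 2$ analogue, and $T^{12}_N:=\sum_{|\alpha_1|,|\alpha_2|<N}\tfrac{1}{\alpha_1!\alpha_2!}\partial^{\alpha_1}_{\xi_1}\partial^{\alpha_2}_{\xi_2}a\,D^{\alpha_1}_{x_1}D^{\alpha_2}_{x_2}b$; it follows from inclusion--exclusion on the index set $\{\min(|\alpha_1|,|\alpha_2|)<N\}$. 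One word of caution on your two-stage Taylor step: it is not enough to show that $c-T^1_N\in\bisi{m_1+p_1}{m_2+p_2-N}$ and $c-T^2_N\in\bisi{m_1+p_1-N}{m_2+p_2}$ separately, because that intersection is strictly larger than $\bisi{m_1+p_1-N}{m_2+p_2-N}$. What is actually needed is what you hint at in your last sentence: Taylor-expand the first-stage remainder $c-T^1_N$ once more in $\eta_1$, check that its main part reproduces $T^2_N-T^{12}_N$ exactly, and observe that the left-over double Taylor remainder does land in $\bisi{m_1+p_1-N}{m_2+p_2-N}$. With that step made explicit the argument closes.
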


\begin{coro} \label{coro:comm}
Let $a \in \bisi{m_1}{m_2}(\Omega_1,\Omega_2), b \in \bisi{p_1}{p_2}(\Omega_1,\Omega_2)$. Then the commutator $[A,B] := AB - BA$ belongs to $\mathrm{L}^{m_1+p_1-1, m_2+p_2} + L^{m_1+p_1, m_2+p_2-1}$.
\begin{proof}
By Theorem \ref{thm:compos} we have as leading order terms $(j=0)$:
\begin{align*}
&c^1_{m_1+p_1-1,m_2+p_2} = a \circ_1 b -   b \circ_1 a \\
&c^2_{m_1+p_1,m_2+p_2-1} = a \circ_2 b -   b \circ_2 a\\ 
&c^{12}_{m_1+p_1,m_2+p_2} = 0.
\end{align*}
Then, expanding $c^1$ and $c^2$ according to the definition of $\circ_j,\ j=1,2$, we get
\begin{align*}
c^j = i\{a,b\}_j + \text{terms of order $(m_j+p_j-2)$,}
\end{align*}
where with $\{a,b\}_j$ we denote the Poisson bracket of $a$ and $b$ in the $j$-argument. Therefore, the leading terms (up to order $(m_1+p_1-2,m_2+p_2-2)$) of the expansion of $c$ can be written as
\begin{align*}
c = 0 + i(\{a,b\}_1 + \{a,b\}_2) \in \mathrm{S}^{m_1+p_1-1, m_2+p_2} + \mathrm{S}^{m_1+p_1, m_2+p_2-1}.
\end{align*}
\end{proof}
\end{coro}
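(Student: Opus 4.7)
The plan is to apply the composition formula of Theorem~\ref{thm:compos} to both $AB$ and $BA$ and subtract the resulting asymptotic expansions term by term. At the top bi-order $(m_1+p_1, m_2+p_2)$ the only contribution comes from $c^{12}$, which is the pointwise product of the scalar symbols $a$ and $b$; these coincide for both orderings and so drop out in the difference. The essential content of the corollary is then that the first genuine correction splits cleanly into two pieces of bi-orders $(m_1+p_1-1, m_2+p_2)$ and $(m_1+p_1, m_2+p_2-1)$.

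Next I would read off the $c^1$- and $c^2$-contributions at $j=0$ from Theorem~\ref{thm:compos}. In the commutator these collapse respectively to
\begin{gather*}
a\circ_1 b - b\circ_1 a, \\
a\circ_2 b - b\circ_2 a,
\end{gather*}
which, for each fixed transverse parameter, are commutators of honest pseudodifferential operators on $\Omega_2$ and $\Omega_1$. Applying the standard H\"ormander composition result fiberwise, each of these is governed at leading order by $i$ times a Poisson bracket, and the bookkeeping shows that one piece drops the first bi-index by one while the other drops the second bi-index by one. Combined, this places the leading part of the commutator symbol inside $\bisi{m_1+p_1-1}{m_2+p_2} + \bisi{m_1+p_1}{m_2+p_2-1}$.

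Finally, I would dispose of the contributions with $j\geq 1$ from the asymptotic expansion: each has bi-order bounded by $(m_1+p_1-j, m_2+p_2-j)$ and is therefore trivially absorbed into either of the two target classes. The step that I expect to demand the most care is verifying that the fiberwise pseudodifferential commutator analysis produces remainders whose bisingular estimates are uniform in the transverse parameter, so that each term of the expansion genuinely belongs to $\bisi{\cdot}{\cdot}(\Omega_1,\Omega_2)$. This reduces to inspecting each Kohn--Nirenberg coefficient of the form $\partial^{\alpha}_{\xi_2}a\cdot D^{\alpha}_{x_2}b$ (or its counterpart in the first variable) and using the defining bisingular bounds on $a$ and $b$ to control its growth in the transverse frequency by $\semi{\xi_1}^{m_1+p_1}$ (respectively $\semi{\xi_2}^{m_2+p_2}$), after which passing back from the symbol to the operator class yields the conclusion.
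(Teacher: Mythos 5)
Your proposal follows the paper's argument essentially verbatim: you cancel the top-order $c^{12}$ term pointwise, reduce the $c^1$- and $c^2$-contributions at $j=0$ to fiberwise commutators of pseudodifferential operators whose leading parts are Poisson brackets (giving one drop in each bi-index), and observe that the $j\geq 1$ remainders are trivially absorbed. Your added remarks on uniformity of the fiberwise estimates in the transverse parameter and on the tail of the asymptotic expansion make explicit two points the paper leaves implicit, but the structure of the argument is the same as in the paper's proof.
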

\begin{remark}
This behaviour under commutators is indeed something peculiar about bisingular calculus. It has the consequence that we can not use a lot of the common ``commutator tricks" in the proofs to obtain microlocal properties. 
\end{remark}
\begin{ex}
\label{ex:comm}
For a better understanding of this phenomenon, consider the model case of a tensor product where $A = A_1 \otimes A_2 \in \elbisi{m_1}{m_2}$, $B = B_1 \otimes B_2 \in \elbisi{p_1}{p_2}$. Then
\begin{equation*}
\begin{split}
[A, B] = [A_1 \otimes A_2 , B_1 \otimes B_2] &= A_1 B_1 \otimes A_2 B_2 - B_1 A_1 \otimes B_2 A_2 \\
&= \underbrace{[A_1, B_1] \otimes A_2 B_2}_{\elbisi{m_1 +p_1 -1}{m_2+ p_2}} + \underbrace{B_1 A_1 \otimes [A_2 , B_2]}_{\elbisi{m_1 +p_1}{m_2 +p_2 -1}}.   
\end{split}
\end{equation*}
\end{ex}

\noindent To close this section, we note that all pseudodifferential operators of order zero or lower, in particular those corresponding to cut-offs and excision functions, are bisingular operators.

\begin{lem} \label{lem:inclu}
$\mathrm{S}^0(\Omega_1 \times \Omega_2) \subset \bisi{0}{0}(\Omega_1 ,\Omega_2)$.
\begin{proof}
Let $a \in \mathrm{S}^0(\Omega_1 \times \Omega_2)$. Then for all pair of multiindex $\alpha = (\alpha_1, \alpha_2),\,\beta=(\beta_1,\beta_2)$ we have
\begin{equation*}
\begin{split}
|D^{\alpha_1}_{\xi_1} D^{\alpha_2}_{\xi_2}D^{\beta_1}_{x_1}D^{\beta_2}_{x_2} a(x_1,x_2,\xi_1,\xi_2)| &= |D^{\alpha}_{\xi} D^{\beta}_{x} a(x,\xi)| \prec \semi{\xi}^{-|\alpha|}\\
& \prec \semi{\xi}^{-|\alpha_1|} \semi{\xi}^{-|\alpha_2|} \prec \semi{\xi_1}^{-|\alpha_1|} \semi{\xi_2}^{-|\alpha_2|},  
\end{split}
\end{equation*}
that is $a \in \bisi{0}{0}(\Omega_1 ,\Omega_2)$.
\end{proof}
\end{lem}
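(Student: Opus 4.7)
The plan is to unpack the definitions and observe that the $S^0$ estimate on $\Omega_1 \times \Omega_2$, where we treat $(\xi_1,\xi_2)$ as the single covariable $\xi \in \RR^{n_1+n_2}$, is in fact \emph{stronger} than the bisingular estimate with indices $(0,0)$. So the proof should essentially be a one-line chain of inequalities once the right elementary estimate is in place.

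The key elementary fact I would single out first is that $\semi{\xi}\geq \semi{\xi_j}$ for $j=1,2$, which is immediate from $|\xi|^2=|\xi_1|^2+|\xi_2|^2$. This gives $\semi{\xi}^{-1}\leq \semi{\xi_j}^{-1}$, and together with the additivity $|\alpha|=|\alpha_1|+|\alpha_2|$ one can split a single factor $\semi{\xi}^{-|\alpha|}$ into $\semi{\xi}^{-|\alpha_1|}\semi{\xi}^{-|\alpha_2|}\leq \semi{\xi_1}^{-|\alpha_1|}\semi{\xi_2}^{-|\alpha_2|}$.

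Given $a\in \mathrm{S}^0(\Omega_1\times\Omega_2)$, I would then combine the Hörmander estimate $|D^{\alpha}_\xi D^{\beta}_x a(x,\xi)|\leq C_{\alpha,\beta,K}\semi{\xi}^{-|\alpha|}$ with the above splitting, identifying $D^{\alpha_1}_{\xi_1}D^{\alpha_2}_{\xi_2}D^{\beta_1}_{x_1}D^{\beta_2}_{x_2} = D^{\alpha}_\xi D^{\beta}_x$ under $(\alpha_1,\alpha_2)\leftrightarrow \alpha$, $(\beta_1,\beta_2)\leftrightarrow \beta$. This produces precisely the bisingular symbol estimate with $m_1=m_2=0$ for arbitrary compact $K_1\times K_2\subset \Omega_1\times\Omega_2$, showing $a\in\bisi{0}{0}(\Omega_1,\Omega_2)$.

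There is no real obstacle here; the only thing worth noting is that the argument works only because we are at order zero on both sides. For $S^m$ with $m>0$, the same splitting would not yield the optimal bisingular orders $(m,m)$, since $\semi{\xi}^m$ is not bounded by $\semi{\xi_1}^m\semi{\xi_2}^m$ in the regime where one of the $\xi_j$ is small; that is the conceptual reason why the lemma is stated for $m=0$ and not in a more general form.
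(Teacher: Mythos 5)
Your argument is correct and is essentially identical to the paper's proof: both rely on the Hörmander estimate $|D^{\alpha}_\xi D^{\beta}_x a|\leq C\semi{\xi}^{-|\alpha|}$, the splitting $\semi{\xi}^{-|\alpha|}=\semi{\xi}^{-|\alpha_1|}\semi{\xi}^{-|\alpha_2|}$, and the elementary bound $\semi{\xi}^{-1}\leq\semi{\xi_j}^{-1}$. Your closing remark about why the argument is confined to order zero is a nice bit of added context, but the core reasoning matches the paper's line for line.
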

\section{Classical microlocal analysis of bisingular operators} \label{sec:2}
\noindent Let us first recall the notion of \emph{classical wave front set}, as introduced by H\"ormander \cite{Hor83a}. 
\begin{deff}
Let $\Omega\subset\RR^n$ open, $u\in\dist(\Omega)$. The distribution $u$ is microlocally $\mathcal{C}^\infty$ near $(x_0,\xi_0)\in\mathrm{T}^*\Omega\setminus0$ if one of the following equivalent conditions is satisfied:
\begin{enumerate}
\item There exists a pseudodifferential operator $A\in \mathrm{L}^0(\Omega)$, non-characteristic at $(x_0,\xi_0)$, such that $Au\in\mathcal{C}^\infty(\Omega)$,
\item There exists a cut-off $\phi\in\mathcal{C}^\infty_0 (\Omega)$ with $\phi\equiv 1$ in an open set containing $x_0$ such that there exists a conic open set $\Gamma\subset\RR^n\setminus 0$ containing $\xi_0$ and constants $C_N,\ R>0$ such that $\forall\ N\in\NN$
\[ 
|\widehat{\phi u}(\xi)|\leq C_N \langle \xi\rangle^{-N} \quad \forall\xi\in\Gamma,\ |\xi|>R
\]
\end{enumerate}
The classical wave front set of $u \in \diprimo(\Omega)$, that we denote by $\WFcl(u)$, is the complement of the set of points where $u$ is microlocally $\Sm$.
\end{deff}
\noindent It is now interesting to compare, for given operators $A$, the sets $\WFcl(Au)$ and $\WFcl(u)$. An operator is \emph{microlocal}, if $\WFcl(Au)\subset\WFcl(u)$. 
\subsection{Mapping properties of bisingular operators}
In this Section we shall estimate the classical wave front set $\WFcl(Au)$ for a linear operator $A$ and a distribution $u$ in terms of the Schwartz kernel $K_A$ of $A$, defined as follows:
\begin{deff}
To each operator $A: \mathcal{D}(\Omega_1)\rightarrow \mathcal{D}^\prime(\Omega_2)$ we can uniquely associate a distribution $K_A$, called the \emph{Schwartz kernel} of $A$, such that for all $f\in\mathcal{D}(\Omega_1)$, $g\in\mathcal{D}(\Omega_2)$ we have
\[
\langle Af,\ g\rangle=\langle K_A , f\otimes g\rangle.
\]
\end{deff}
\noindent The Schwartz kernel of a bisingular operator with symbol $a$ is then defined by the oscillatory integral
\begin{equation}
K_A(x_1,x_2,y_1,y_2)=\int_{\RR^{n_1+n_2}} e^{i(x_1-y_1,x_2-y_2)\cdot(\xi_1,\xi_2)} a(x_1,x_2,\xi_1,\xi_2)\dbar\xi_1\dbar\xi_2.
\end{equation}
\noindent
The Schwartz kernel Theorem states the following smoothing property:
\begin{pipi}
A linear map $\mathcal{D}^\prime(\Omega_1\times\Omega_2)\rightarrow\mathcal{D}^\prime(\Omega_1\times\Omega_2)$ is actually a mapping to $\mathcal{D}(\Omega_1\times\Omega_2)$, i.e. \textit{smoothing}, if and only if its distributional kernel is in $\mathcal{D}((\Omega_1\times\Omega_2)^{\times 2})$.
\end{pipi}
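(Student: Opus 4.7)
The plan is to prove both implications of this Schwartz-Kernel-type characterisation, interpreting the condition that $A$ is smoothing as continuity of $A: \mathcal{D}'(\Omega_1 \times \Omega_2) \to \mathcal{D}(\Omega_1\times\Omega_2)$ (equivalently, into $C^\infty$), and the kernel condition as smoothness (resp.\ compactly supported smoothness) of $K_A$ on $(\Omega_1\times\Omega_2)^{\times 2}$.

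For the sufficiency direction, I would start from $K_A \in \mathcal{D}((\Omega_1\times\Omega_2)^{\times 2})$ and define $(Au)(z) := \langle K_A(z, \cdot), u\rangle$ for $z = (x_1,x_2)$, the pairing being unambiguous thanks to the compact support of $K_A$. Joint smoothness of $K_A$ together with continuity of the distributional pairing against a compactly supported smooth function allows me to differentiate under the pairing: for each multi-index $\alpha$, $D^\alpha_z (Au)(z) = \langle D^\alpha_z K_A(z, \cdot), u\rangle$, continuously in $z$. The compact support of $K_A$ in the $z$-variables also forces $Au$ to have compact support, so $Au \in \mathcal{D}(\Omega_1 \times \Omega_2)$.

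For the necessity direction, the Schwartz Kernel Theorem already supplies a distributional kernel $K_A \in \mathcal{D}'((\Omega_1\times\Omega_2)^{\times 2})$, so the task reduces to upgrading its regularity. My strategy would build on the formal identity $K_A(z,w) = (A\delta_w)(z)$: fix $w_0$ and approximate $\delta_{w_0}$ in $\mathcal{D}'$ by mollifiers $\rho_\varepsilon(\cdot - w_0)$. Continuity of $A$ then forces $A\rho_\varepsilon(\cdot - w_0)$ to converge in the target topology to a smooth function, which can be identified via pairing against test functions with $K_A(\cdot, w_0)$. This yields smoothness of $K_A(\cdot, w_0)$ in $z$ for each fixed $w_0$.

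The main obstacle will be upgrading this separate smoothness in $z$ to joint smoothness in $(z,w)$. To overcome it, I would exploit that $w \mapsto \delta_w$ is smooth from $\Omega_1\times\Omega_2$ into $\mathcal{D}'$, in the sense that the distributional derivatives $\partial^\beta_w \delta_w$ are well-defined distributions depending continuously on $w$, and combine this with the continuity of $A$ to conclude that $w \mapsto A\delta_w$ is smooth as a map into $\mathcal{D}(\Omega_1\times\Omega_2)$. Evaluating at $z$ then yields joint continuity of every mixed derivative $D^\alpha_z D^\beta_w K_A(z,w)$. An alternative, possibly cleaner route would be a two-variable mollification argument: convolving $K_A$ with $\rho_\varepsilon^{(1)} \otimes \rho_\varepsilon^{(2)}$, one shows via the continuity hypothesis that the convolutions are uniformly bounded in every $C^\infty$-seminorm, and hence converge in $C^\infty$ to $K_A$, proving $K_A \in \mathcal{D}$ after noting that the support remains compact by the structure of $A$.
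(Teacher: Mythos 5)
The paper does not prove this Proposition; it simply invokes it as the smoothing version of the Schwartz kernel theorem. So there is no internal argument to compare against, and your proposal must stand on its own.

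Your sufficiency direction is fine: compact support of $K_A$ in the second group of variables makes the pairing against an arbitrary distribution unambiguous, differentiation under the pairing gives smoothness, and compact support in the first group of variables makes $Au$ compactly supported.

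Your necessity direction has the right skeleton, but it leaves a genuine gap on exactly the part of the conclusion that distinguishes $\mathcal{D}$ from $\mathcal{C}^\infty$: you need $K_A$ to be \emph{compactly supported}, and ``the support remains compact by the structure of $A$'' is not an argument. The standard way to close this is: $\{\delta_w : w\in\Omega_1\times\Omega_2\}$ is weakly bounded, hence bounded, in $\dist(\Omega_1\times\Omega_2)$; a continuous linear $A$ sends it to a bounded subset of $\di(\Omega_1\times\Omega_2)$; bounded subsets of $\di$ are contained in a single $\di_K$; hence $K_A(z,w)=(A\delta_w)(z)$ vanishes for $z\notin K$ uniformly in $w$. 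Compact support in $w$ then follows by running the same argument on the transpose $\,^tA$, which by reflexivity of $\di$ is again a continuous map $\dist\to\di$ with kernel $(w,z)\mapsto K_A(z,w)$. This observation also repairs the final step of your joint-smoothness argument: knowing that $w\mapsto A\delta_w$ takes values in the fixed Fr\'echet space $\di_K$ is what lets you combine its $\Sm$-dependence on $w$ (coming from $\Sm$-dependence of $w\mapsto\delta_w$ into $\dist$ with the strong topology, where $\partial_w^\beta\delta_w$ means $(-1)^{|\beta|}(\partial^\beta\delta)_w$) with evaluation in $z$ to conclude joint continuity of all mixed derivatives $D_z^\alpha D_w^\beta K_A$. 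With those additions the proof is complete; the two-variable mollification route you sketch at the end would also work, but it needs the same boundedness argument to control supports.
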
 
\noindent Therefore pseudodifferential operators with symbols in $S^{-\infty}$ and  bisingular operators with symbol in $S^ {-\infty,-\infty}$ can be seen to be smoothing.\\
As the prototype of a bisingular operator is the tensor product of two pseudodifferential operators, it makes sense also to define what is meant by an operator that is smoothing in one set of variables only.
\begin{deff}
We define the space $\mathcal{C}^\infty(\Omega_1,\diprimo(\Omega_2))$ as all such $u\in\diprimo(\Omega_1\times\Omega_2)$ such that for each $f\in\mathcal{D}(\Omega_2)$, the distribution $\mathcal{D}(\Omega_1)\ni u(g\otimes\cdot):g\mapsto u(g\otimes f)$ is actually a smooth function.\\
Correspondingly, we can define $\mathcal{C}^\infty(\Omega_2,\diprimo(\Omega_1))$.
\end{deff}
\noindent We can now list the mapping properties of bisingular operators on these spaces, following the ideas in \cite{Tre67}.
\begin{lem}
\label{lem:sminonevar}
Bisingular operators map the spaces $\mathcal{C}^\infty(\Omega_1,\diprimo(\Omega_2))$ and $\mathcal{C}^\infty(\Omega_2,\diprimo(\Omega_1))$ into themselves.\\
Let $a\in S^{-\infty,m}$, then the bisingular operator $Op(a)$ maps $\diprimo(\Omega_1\times\Omega_2)$ to $\mathcal{C}^\infty(\Omega_1,\diprimo(\Omega_2))$ and $\mathcal{C}^\infty(\Omega_2,\diprimo(\Omega_1))$ to $\mathcal{C}^\infty(\Omega_1\times\Omega_2).$\\
Accordingly, let $a\in S^{m,-\infty}$, then the bisingular operator $Op(a)$ maps $\diprimo(\Omega_1\times\Omega_2)$ to $\mathcal{C}^\infty(\Omega_2,\diprimo(\Omega_1))$ and $\mathcal{C}^\infty(\Omega_1,\diprimo(\Omega_2))$ to $\mathcal{C}^\infty(\Omega_1\times\Omega_2).$
\end{lem}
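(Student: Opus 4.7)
The proof is organized around the Schwartz kernel $K_A(x_1,x_2,y_1,y_2)$ of the bisingular operator $A=Op(a)$, viewed through its oscillatory integral representation. The plan is first to prove the two assertions for symbols in $S^{-\infty,m}$ and $S^{m,-\infty}$, and then to reduce the general preservation claim to them via a standard excision-function decomposition.

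Assume $a\in S^{-\infty,m}$. Since $a$ is rapidly decreasing in $\xi_1$, the partial oscillatory integral
\begin{equation*}
L(x_1,x_2,y_1,\xi_2) := (2\pi)^{-n_1}\int e^{i(x_1-y_1)\cdot\xi_1}\,a(x_1,x_2,\xi_1,\xi_2)\,d\xi_1
\end{equation*}
converges absolutely together with all its derivatives in $(x_1,y_1)$, so it lies in $\Sm(\Omega_1\times\Omega_1)$ with values in the H\"ormander class $S^m(\Omega_2,\RR^{n_2})$. A further oscillatory integration in $\xi_2$ shows that $K_A$ is smooth in $(x_1,y_1)$ with values in $\diprimo(\Omega_2\times\Omega_2)$. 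Pairing $\langle Op(a)u,g\otimes f\rangle$ with $g\in\mathcal{D}(\Omega_1)$, $f\in\mathcal{D}(\Omega_2)$ and applying Fubini yields that $x_1\mapsto\langle Op(a)u(x_1,\cdot),f\rangle$ is smooth, i.e.\ $Op(a)u\in\Sm(\Omega_1,\diprimo(\Omega_2))$ for any $u\in\diprimo(\Omega_1\times\Omega_2)$. If additionally $u\in\Sm(\Omega_2,\diprimo(\Omega_1))$, I use that for each fixed $(x_1,y_1)$, $L(x_1,\cdot,y_1,\cdot)$ is the symbol of a $\Psi$DO of order $m$ on $\Omega_2$, depending smoothly on the parameters; acting on the smooth-in-$y_2$ structure of $u$ and pairing with the remaining distribution in $y_1$ produces $Op(a)u\in\Sm(\Omega_1\times\Omega_2)$. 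The case $a\in S^{m,-\infty}$ is symmetric.

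For the general preservation assertion I would decompose
\begin{equation*}
a = \chi_1(\xi_1)\,a + (1-\chi_1(\xi_1))\,\chi_2(\xi_2)\,a + (1-\chi_1(\xi_1))(1-\chi_2(\xi_2))\,a,
\end{equation*}
with $\chi_i$ a cutoff supported near $\xi_i=0$. The first summand lies in $S^{-\infty,m_2}$ and the second in $S^{m_1,-\infty}$; by the previous step the first sends any $u\in\diprimo(\Omega_1\times\Omega_2)$ into $\Sm(\Omega_1,\diprimo(\Omega_2))$ (hence preserves it), while the second sends $\Sm(\Omega_1,\diprimo(\Omega_2))$ into $\Sm(\Omega_1\times\Omega_2)$ (and symmetrically for the other space). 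The third summand has symbol in $S^{m_1,m_2}$ supported where both frequencies are large; I would treat it via the operator-valued symbol viewpoint. Dualizing against $f\in\mathcal{D}(\Omega_2)$, the $\Omega_2$-adjoint $A^1(x_1,\xi_1)^{*}f\in\mathcal{D}(\Omega_2)$ depends smoothly on $x_1$ and is a H\"ormander symbol of order $m_1$ in $\xi_1$, so pairing with the smooth function $y_1\mapsto u(y_1,\cdot)$ reduces the question to the action of a standard scalar $L^{m_1}(\Omega_1)$ operator on $\Sm(\Omega_1)$, which preserves smoothness.

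The hardest step is the last one, namely the reduction of $Op((1-\chi_1)(1-\chi_2)a)$ to a scalar pseudodifferential operator on $\Omega_1$ via duality with a test function $f\in\mathcal{D}(\Omega_2)$. The technical point is to verify that the map $(x_1,\xi_1)\mapsto A^1(x_1,\xi_1)^{*}f$ indeed defines, upon further pairing with the smooth profile $y_1\mapsto u(y_1,\cdot)\in\diprimo(\Omega_2)$, a scalar H\"ormander symbol in $S^{m_1}(\Omega_1,\RR^{n_1})$, uniformly as $f$ ranges over bounded subsets of $\mathcal{D}(\Omega_2)$. Once this is established, the standard $\Sm(\Omega_1)\to\Sm(\Omega_1)$ continuity of $L^{m_1}(\Omega_1)$-operators closes the argument.
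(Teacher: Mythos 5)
The paper itself gives no proof of this Lemma --- it only points to the ideas of Treves [Tre67] --- so your proposal is filling a gap, and the kernel/operator-valued-symbol route you take is the natural expansion of that reference. The two specific assertions for $a\in S^{-\infty,m}$ and $a\in S^{m,-\infty}$ are handled correctly: rapid decay in one covariable makes the corresponding partial kernel integral absolutely convergent, so that $K_A$ is $\Sm$ in the matching base variables with values in distributions, and the two mapping statements follow from pairing against test functions. The excision decomposition for the preservation claim is reasonable but slightly redundant: the third summand is essentially a generic bisingular symbol, and the dualization argument you run on it applies verbatim to the full symbol $a$, so the splitting buys you nothing beyond a restatement of the already-proved parts.

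One step is phrased more loosely than the calculation warrants. After dualizing against $f\in\mathcal{D}(\Omega_2)$ and pairing with the profile $y_1\mapsto u(y_1,\cdot)\in\diprimo(\Omega_2)$, what you in fact obtain is an \emph{amplitude}
\begin{equation*}
b(x_1,y_1,\xi_1)=\bigl\langle u(y_1,\cdot),\,A^1(x_1,\xi_1)^{*}f\bigr\rangle
\end{equation*}
depending jointly on three variables and satisfying order-$m_1$ symbol estimates in $\xi_1$, rather than a scalar left symbol $c(x_1,\xi_1)$ times a fixed element of $\Sm(\Omega_1)$; the $\xi_1$- and $y_1$-dependences do not factor. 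The quantity $\langle Op(a)u(x_1,\cdot),f\rangle$ is then the oscillatory integral $\int e^{i(x_1-y_1)\cdot\xi_1}\,b(x_1,y_1,\xi_1)\,dy_1\,\dbar\xi_1$, i.e.\ the amplitude operator $Op(b)$ applied to the constant function $1$ (legitimate here thanks to the compact support assumptions), and the desired smoothness in $x_1$, uniform as $f$ ranges over bounded subsets of $\mathcal{D}(\Omega_2)$, follows after the standard Kuranishi reduction of amplitudes to left symbols. So the conclusion is right and the idea is the right one, but the reduction is to an amplitude operator acting on a constant, not literally to ``a standard scalar $\mathrm{L}^{m_1}(\Omega_1)$ operator on $\Sm(\Omega_1)$''. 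If you restate the last paragraph in those terms, the proof is complete.
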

\noindent The following Lemma (see e.g. \cite{GS94}) indicates how the singularities of a distribution transform under the action of a linear operator in terms of the singularities of its kernel.
\begin{lem}\label{lem:kerwf}
Let $K \subset \diprimo (\Omega_1 \times \Omega_2)$, and denote by the same letter the corresponding operator $K: \mathcal{C}_0^\infty(\Omega_2) \rightarrow \diprimo (\Omega_1)$. Set
\begin{equation*}
\begin{split}
&\WF'(K)  := \{(x_1,x_2,\xi_1,-\xi_2) \in T^*(\Omega_1 \times \Omega_2) \setminus 0; (x_1,x_2,\xi_1,\xi_2) \in \WFcl(K) \} \\
&\WF'_{\Omega_1}(K) := \{(x_1,\xi_1) \in T^*\Omega_1 \setminus 0 ;\, \exists y \in \Omega_2\,\text{with}\, (x_1, y, \xi_1, 0) \in \WF'(K) \} \\
&\WF'_{\Omega_2}(K) := \{(x_2,\xi_2) \in T^*\Omega_2 \setminus 0 ;\, \exists x \in \Omega_1\,\text{with}\, (x, x_2, 0, \xi_2) \in \WF'(K) \}. 
\end{split}
\end{equation*}
\noindent Then if $u \in \eeprimo(\Omega_2)$ and $\WFcl(u) \cap \WF'_{\Omega_2}(K) = \emptyset$, we have
\begin{equation*}
\WFcl(K u) \subset \WF'(K)(\WFcl(u)) \cup \WF'_{\Omega_1}(K),
\end{equation*}
where we considered $\WF'(K)$ as a relation $T^* \Omega_2 \rightarrow T^* \Omega_1$.
\end{lem}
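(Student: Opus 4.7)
My plan is to realize $Ku$ as a composition of two operations whose effect on wave front sets is well-understood---multiplication of distributions and pushforward along a projection---and then read off the claimed inclusion from the standard formulae. Concretely, let $\pi_1 : \Omega_1 \times \Omega_2 \to \Omega_1$ denote the first projection and introduce $\tilde u := 1_{\Omega_1} \otimes u \in \dist(\Omega_1 \times \Omega_2)$. A short preliminary step verifies that, under the stated hypothesis, $Ku$ coincides with $\pi_{1\ast}(K \cdot \tilde u)$; this can be checked by reducing to smooth kernels and approximating, or by a direct duality computation. Because $u \in \eeprimo(\Omega_2)$, the product $K \cdot \tilde u$ will have compact support in the $x_2$-variable, so $\pi_1$ restricted to that support is proper and the pushforward is a bona fide distribution on $\Omega_1$.

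The next step is to interpret the product $K \cdot \tilde u$ via H\"ormander's product theorem. Tensoring with the smooth constant function creates no singular directions in the $\xi_1$-factor, so
\begin{equation*}
\WFcl(\tilde u) \subset \{(x_1, x_2, 0, \eta) : (x_2,\eta) \in \WFcl(u)\}.
\end{equation*}
H\"ormander's non-cancellation condition for the product then reduces to requiring that whenever $(x_1, x_2, 0, \xi_2) \in \WFcl(K)$ one has $(x_2, -\xi_2) \notin \WFcl(u)$, which is exactly the hypothesis $\WFcl(u) \cap \WF'_{\Omega_2}(K) = \emptyset$. Once the product is defined, the product theorem furnishes
\begin{equation*}
\WFcl(K \tilde u) \subset \WFcl(K) \,\cup\, \WFcl(\tilde u) \,\cup\, \bigl(\WFcl(K) + \WFcl(\tilde u)\bigr),
\end{equation*}
where $+$ denotes the fibrewise sum of covectors at a common base point.

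Finally, the standard pushforward estimate along $\pi_1$ gives
\begin{equation*}
\WFcl(Ku) \subset \{(x_1, \xi_1) : \exists\, x_2,\ (x_1, x_2, \xi_1, 0) \in \WFcl(K \tilde u)\},
\end{equation*}
and I would then run through the three summands above. The contribution from $\WFcl(K)$ with vanishing second covector gives precisely $\WF'_{\Omega_1}(K)$; the contribution from $\WFcl(\tilde u)$ is vacuous on the locus $\xi_1 \neq 0$; and the mixed piece forces a decomposition $(x_1, x_2, \xi_1, 0) = (x_1, x_2, \xi_1, -\eta) + (x_1, x_2, 0, \eta)$ with $(x_1, x_2, \xi_1, -\eta) \in \WFcl(K)$ and $(x_2, \eta) \in \WFcl(u)$, which by the sign-flipping convention in the definition of the prime places $(x_1, \xi_1)$ into $\WF'(K)(\WFcl(u))$. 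The only delicate points I expect are the careful bookkeeping of the sign conventions between $\WFcl$ and $\WF'$, and the verification that the product-plus-pushforward construction actually reproduces the action of $K$ on $u$ under the assumed wave front condition---everything else is a direct application of the product and pushforward theorems.
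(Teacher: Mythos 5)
The paper states this lemma without proof, only referencing \cite{GS94}; it is the analogue of H\"ormander's composition theorem for wave front sets (see also \cite{Hor83a}, Theorem 8.2.13), so there is no ``paper's own proof'' to compare against. Your product--and--pushforward route is a correct and standard way to obtain it, and the bookkeeping checks out: the estimate $\WFcl(1\otimes u)\subset\{(x_1,x_2,0,\eta):(x_2,\eta)\in\WFcl(u)\}$ follows from the tensor lemma because $\WFcl(1)=\emptyset$; the non-cancellation condition for the product $K\cdot(1\otimes u)$ unwinds, via the sign flip built into $\WF'$, precisely to $\WFcl(u)\cap\WF'_{\Omega_2}(K)=\emptyset$; and the three pieces of the product estimate, once restricted by the pushforward to covectors of the form $(\xi_1,0)$ with $\xi_1\neq 0$, produce $\WF'_{\Omega_1}(K)$, the empty set, and $\WF'(K)(\WFcl(u))$, respectively. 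The one step you should not treat as formal is the identity $Ku=\pi_{1*}\bigl(K\cdot(1\otimes u)\bigr)$: it is elementary for a smooth kernel, but for a distributional $K$ you need to pass to the limit in $\langle Ku,\phi\rangle=\langle K,\phi\otimes u\rangle$, and the continuity statement that justifies this is exactly the sequential continuity of H\"ormander's product on distributions whose wave front sets lie in closed cones with no antipodal intersection --- so the same hypothesis that defines the product also validates the identification, and your ``direct duality computation'' should be spelled out along these lines.
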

\subsection{Classical microlocality properties of bisingular operators}
From the previous Lemma it is easy to derive that all pseudodifferential operators are microlocal. In fact, if $K$ is the kernel of a pseudodifferential operator $A$ on $\Omega = \Omega_1 = \Omega_2$, then $\WF'_{\Omega_1}(K) = \emptyset = \WF'_{\Omega_2}(K)$, $\WF'(K) = \{(x,x,\xi,-\xi)\}$, hence from Lemma \ref{lem:kerwf} we get $\WFcl(Au) \subset \WFcl(u)$.
We now study the microlocal properties of bisingular operators.

\begin{ex}\label{ex:conv}
Consider $\Omega_1=\Omega_2=\RR$.
We further pick positive $\phi,\ \psi\in\mathcal{C}^\infty_0(\RR)$.
Now define the pseudodifferential operator $T_\phi$ on
$\mathcal{C}_0^\infty\left(\RR\right)$ by
\begin{equation*}
T_\phi(f):=\phi * f.
\end{equation*}
Then the operator $A:= T_\phi\otimes I$ is a tensor product of two pseudodifferential operators and thus a bisingular operator on $\RR^2$.\\
Now consider the distribution $u=\psi\otimes\delta$. It has the following
wave front set:
\begin{equation*}
\WFcl(u)=\left\{(x_1,0,0,\xi_2)|\,x_1\in\supp(\psi),\ \xi_2\in \RR\setminus 0\right\}.
\end{equation*}
\noindent Then it is easy to see that
\begin{equation*}
\WFcl(A u)=\left\{(x_1,0,0,\xi_2)|\,x_1\in\big(\supp(\psi)+\supp(\phi)\big),\ \xi_2\in \RR\setminus 0\right\}.
\end{equation*}
The example can be similarly given in local coordinates on a product of two compact manifolds. We restricted ourselves to the Euclidean space for the sake of comprehensibility.
\end{ex}
\noindent The previous example shows that, in general, bisingular operators do not have the microlocal property. As a motivation, we start with the model case of a tensor product of two pseudodifferential operators. For that we use the well-known estimate for the wave front set of a tensor product of distributions (cf. e.g. \cite{Hor83a}):

\begin{lem}
Let $u\in\dist(\Omega_1)$, $v\in\dist(\Omega_2)$. Then 
\begin{align*}\
\WFcl(u\otimes v)& \subset \WFcl(u)\times\WFcl(v) \cup (\supp(u)\times\{0\})\times\WFcl(v) \\
&\cup\WFcl(v)\times(\supp(v)\times\{0\}).
\end{align*}
\label{lem:wftens}
\end{lem}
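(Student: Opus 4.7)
The plan is to verify the inclusion pointwise by contraposition, using the Fourier-transform characterization of the classical wave front set. Given $(x_1, x_2, \xi_1, \xi_2) \in T^*(\Omega_1 \times \Omega_2) \setminus 0$ outside the right hand side, I would exhibit a product cutoff $\phi_1 \otimes \phi_2$, with $\phi_i \in \Smc(\Omega_i)$ and $\phi_i(x_i) \neq 0$, together with a conic neighborhood $\Gamma$ of $(\xi_1, \xi_2)$, on which
$$\widehat{(\phi_1 \otimes \phi_2)(u \otimes v)}(\eta_1, \eta_2) = \widehat{\phi_1 u}(\eta_1)\, \widehat{\phi_2 v}(\eta_2)$$
is rapidly decreasing. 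The systematic tool is the global polynomial bound $|\widehat{\phi w}(\eta)| \leq C \langle \eta \rangle^{M}$ valid for any compactly supported distribution, which will control the factor that is \emph{not} chosen via a microlocal smoothness property.

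The argument then splits according to which of $\xi_1, \xi_2$ vanishes. If both are nonzero, the hypothesis gives $(x_1, \xi_1) \notin \WFcl(u)$ or $(x_2, \xi_2) \notin \WFcl(v)$; by symmetry assume the former, and pick $\phi_1$ and a conic neighborhood $\Gamma_1 \ni \xi_1$ witnessing rapid decay of $\widehat{\phi_1 u}$, with $\phi_2$ arbitrary. If $\xi_1 = 0$ (hence $\xi_2 \neq 0$), the hypothesis is that $x_1 \notin \supp u$ or $(x_2, \xi_2) \notin \WFcl(v)$. In the first subcase one takes $\phi_1$ supported away from $\supp u$ with $\phi_1(x_1) \neq 0$, so that $\phi_1 u \equiv 0$ and the Fourier transform vanishes identically; in the second one picks $\phi_2$ and $\Gamma_2 \ni \xi_2$ as in the $\WFcl$ definition and $\phi_1$ arbitrary. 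The case $\xi_2 = 0$, $\xi_1 \neq 0$ is symmetric.

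The key step is to combine the microlocal rapid decay in one factor with the polynomial growth in the other inside a genuine conic neighborhood of $(\xi_1, \xi_2)$ in $\RR^{n_1 + n_2}$. Concretely, when $\xi_1 \neq 0$ I would shrink the neighborhood so that every $\eta = (\eta_1, \eta_2) \in \Gamma$ satisfies both $|\eta_1| \geq c|\eta|$ and $\eta_1 \in \Gamma_1$, which is possible because the direction $(\xi_1, \xi_2)/|(\xi_1, \xi_2)|$ has nonzero first component. The product is then dominated by
$$C_N \langle \eta_1 \rangle^{-N} \langle \eta_2 \rangle^{M} \leq C'_N \langle \eta \rangle^{M - N},$$
which decays as fast as desired on $\Gamma$ by taking $N$ large. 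The symmetric estimate handles the case $\xi_1 = 0$, $\xi_2 \neq 0$, with the roles of the two factors swapped.

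The main obstacle, beyond careful bookkeeping of the cases, is precisely this geometric observation: in order to convert a microlocal smoothness statement on a single factor into decay in a conic neighborhood of the tensor direction, one must ensure that the component carrying the rapid-decay estimate is comparable to the full frequency $|\eta|$ and stays inside its microlocal cone. Once this is set up, the polynomial growth of the complementary factor is absorbed by the arbitrarily fast decay of the first, and the three regions listed on the right hand side correspond exactly to the configurations in which neither of the above cutoff constructions is available.
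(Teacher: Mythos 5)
The paper does not prove this lemma; it is stated as a well-known fact and cited to H\"ormander, where it appears as Theorem~8.2.9. Your argument is correct and reproduces that standard proof: contraposition via the Fourier characterization of $\WFcl$, the factorization $\widehat{(\phi_1\otimes\phi_2)(u\otimes v)}(\eta_1,\eta_2)=\widehat{\phi_1 u}(\eta_1)\,\widehat{\phi_2 v}(\eta_2)$, and the pairing of arbitrarily fast decay in the chosen factor against the Paley--Wiener--Schwartz polynomial bound in the other. The geometric observation you isolate --- that in a sufficiently small cone around $(\xi_1,\xi_2)$ with $\xi_1\neq 0$ one has $|\eta_1|\geq c|\eta|$ and $\eta_1$ remains in the microlocal cone $\Gamma_1$ --- is precisely what converts decay in $\langle\eta_1\rangle$ into decay in $\langle\eta\rangle$, and is the only non-mechanical step. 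One remark: the statement as printed in the paper carries a typo, the third term should read $\WFcl(u)\times(\supp(v)\times\{0\})$, and your case $\xi_2=0,\ \xi_1\neq 0$ is in fact handling this corrected version.
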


\noindent This can be used to estimate the wave front relation for a bisingular operator given as the tensor product of two pseudodifferential operators. As a matter of fact this extends by direct calculation using standard techniques of integral regularization (see, e.g.  \cite{Hor83a}, \cite{Shu01}, \cite{GS94}) to
\begin{theo}\label{theo:wfkern}
Let $A\in \elbisi{m_1}{m_2}(\Omega_1,\Omega_2)$. Then we can estimate the wave front set of the corresponding kernel as follows:
\begin{equation*}
\begin{split}\label{eq:wfkerntotal}
\WFcl(\mathcal{K}_A)&\subset \{(x_1,x_2,x_1,x_2, \xi_1,\xi_2, -\xi_1,-\xi_2)|x_i\in\Omega_i,\ \xi_i\in \RR^{n_i} \setminus 0\}\\
 &\cup \{(x_1,x_2,x_1,y_2,\xi_1,0,-\xi_1,0)|x_i, y_i\in\Omega_i,\ \xi_1\in \RR^{n_1} \setminus 0\}\\
 &\cup \{(x_1,x_2,y_1,x_2,0,\xi_2, 0,-\xi_2)|x_i, y_i\in\Omega_i,\ \xi_2\in \RR^{n_2} \setminus 0\}.
\end{split}
\end{equation*}
\end{theo}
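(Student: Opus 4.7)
The plan is to bound the Fourier transform $\widehat{\chi\mathcal{K}_A}(\eta,\theta)$ for a cut-off $\chi \in \Smc$ supported near a candidate base point $(x^0, y^0)$, and to show that whenever $(x^0, y^0; \eta^0, \theta^0)$ lies outside the union on the right, this Fourier transform decays rapidly in a conic neighborhood of $(\eta^0, \theta^0)$. After Fubini,
\[
\widehat{\chi\mathcal{K}_A}(\eta,\theta) = \iiint e^{i\Phi}\chi(x,y) a(x,\xi)\,dxdy\,\dbar\xi,\qquad \Phi = (x-y)\cdot\xi - x\cdot\eta - y\cdot\theta.
\]
The critical set of $\Phi$ is $\{x = y,\ \xi = \eta = -\theta\}$, which in the classical pseudodifferential case forces the wave front set of $\mathcal{K}_A$ into the first of the three listed sets. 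The roles of the second and third sets are to capture additional singularities coming from the failure of bisingular symbols to decay rapidly when only one of the two frequency blocks is large.

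To implement this, I would decompose $a = a\psi_1\psi_2 + a(1-\psi_1)\psi_2 + a\psi_1(1-\psi_2) + a(1-\psi_1)(1-\psi_2)$ via $0$-excision functions $\psi_i = \psi_i(\xi_i)$. The last summand lies in $\bisi{-\infty}{-\infty}$ and hence produces a smooth kernel. The summand $a\psi_1(1-\psi_2) \in \bisi{m_1}{-\infty}$: by Lemma \ref{lem:sminonevar} its kernel is smooth in the $(x_2,y_2)$-variables, so its wave front set has vanishing $(\eta_2,\theta_2)$-components, and the standard pseudodifferential wave-front analysis in $(x_1,y_1,\xi_1)$ then places it inside the second set. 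Symmetrically, $a(1-\psi_1)\psi_2$ contributes only to the third set. For the principal piece $a\psi_1\psi_2$, supported where both $|\xi_i| \gtrsim 1$, I would run non-stationary phase integration by parts: in directions with $\eta^0 + \theta^0 \neq 0$, integrations by parts in $(x,y)$ combined with $|\xi - \eta|^2 + |\xi + \theta|^2 \geq \tfrac12|\eta + \theta|^2$ yield rapid decay in $(\eta,\theta)$; in directions with $\eta^0 = -\theta^0$ but $x^0 \neq y^0$, separate integrations by parts in $\xi_1$ and $\xi_2$ using the non-vanishing factors $x_i - y_i$ arbitrarily lower the orders $m_i$, after which a final round of integration by parts in $(x, y)$ completes the decay estimate. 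Outside the diagonal critical set every base direction is thereby covered, leaving only the first set as possible wave front contribution from this piece.

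The main obstacle is that on the principal piece $a\psi_1\psi_2$ the bisingular estimate is \emph{not} a classical H\"ormander estimate on the joint frequency $\xi = (\xi_1, \xi_2) \in \RR^{n_1+n_2}$: only the mixed bound $\semi{\xi_1}^{m_1-|\alpha_1|}\semi{\xi_2}^{m_2-|\alpha_2|}$ is available, rather than $\semi{\xi}^{m_1+m_2-|\alpha|}$. Consequently, each integration by parts in $\xi$ must be split into separate operations in $\xi_1$ and $\xi_2$, each lowering only the corresponding order, and the resulting regularized integrand must be estimated via Peetre-type inequalities to convert $\semi{\xi_i}$-growth into $\semi{\xi_i - \eta_i}$- and $\semi{\eta_i}$-factors that combine correctly with the subsequent $(x,y)$-integrations by parts. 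This bookkeeping is the technical heart of the argument, but past it the computation reduces on each summand to the standard integral regularization techniques of \cite{Hor83a, Shu01, GS94}.
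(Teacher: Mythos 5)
Your overall framework is sensible and, for three of the four excision pieces, your reasoning is sound: the piece in $\bisi{-\infty}{-\infty}$ has smooth kernel, and the cross pieces in $\bisi{m_1}{-\infty}$, $\bisi{-\infty}{m_2}$ have kernels whose wave front sets land in the second and third sets respectively (the citation of Lemma \ref{lem:sminonevar} is slightly off — you want smoothness of the \emph{kernel} in the pair $(x_2,y_2)$, which does follow since the rapid decay of the symbol in $\xi_2$ lets you integrate $\xi_2$ out absolutely and produces rapid decay in $x_2 - y_2$, hence vanishing of $(\eta_2,\theta_2)$ on $\WFcl$).

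The gap is in the principal piece. The claim that $a\psi_1\psi_2$ contributes \emph{only} to the first set is false, and your non-stationary-phase argument tacitly assumes both $x_1 \neq y_1$ and $x_2 \neq y_2$. Consider $a = a_1(x_1,\xi_1)$ with $A_1 = a_1(x_1,D_1)$ having off-diagonal kernel support (e.g. the convolution operator of Example \ref{ex:conv}). Then $a\psi_1\psi_2 = (a_1\psi_1)\otimes\psi_2$ and its kernel is, modulo smooth corrections, $K_{a_1\psi_1}(x_1,y_1)\otimes\delta(x_2-y_2)$. By Lemma \ref{lem:wftens}, the term $(\supp K_{a_1\psi_1}\times\{0\})\times\WFcl(\delta)$ produces points $(x_1,x_2,y_1,x_2,0,\xi_2,0,-\xi_2)$ with $x_1\neq y_1$ — these lie in the \emph{third} set, not the first. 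Concretely, when $x_1^0\neq y_1^0$ but $x_2^0=y_2^0$ and $\eta_1^0=\theta_1^0=0$ while $\eta_2^0 = -\theta_2^0\neq 0$, your procedure stalls: $\xi_2$-IBP is unavailable (no factor $x_2-y_2$), the $(x_2,y_2)$-IBP is stationary at $\xi_2=\eta_2=-\theta_2$, and $\xi_1$-IBP combined with $(x_1,y_1)$-IBP only yields $\langle\xi_1\rangle^{-M}\langle\xi_1-\eta_1\rangle^{-N}$ with $\eta_1\approx 0$, which gives convergence but no decay along the ray. That is exactly as it should be — those points genuinely belong to $\WFcl(\mathcal K_A)$ in general, and to the third set of the theorem.

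The fix is a reorganization, not a different method. Do not try to confine the principal piece to the first set; instead, for a base point $(x^0,y^0;\eta^0,\theta^0)$ lying \emph{outside all three} sets, observe that at least one pair $(x_i-y_i,\eta_i)$ has \emph{both} entries nonzero. Then $\xi_i$-IBP (using $x_i-y_i\neq 0$) yields $\langle\xi_i\rangle^{-M}$ factors, $(x_i,y_i)$-IBP yields $\langle\xi_i-\eta_i\rangle^{-N}$ (or, when $\eta^0+\theta^0\neq 0$, the stronger $\langle\eta+\theta\rangle^{-N}$ via the parallelogram bound), and Peetre's inequality combines them into $\langle\eta_i\rangle^{-N}$, which is rapid decay along the ray since $|\eta_i|\sim t$ there. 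This case analysis over base points — not over excision pieces — is what delivers the containment.
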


\begin{coro}
Let $A \in \elbisi{m_1}{m_2}(\Omega_1, \Omega_2)$, $u\in\mathcal{E}^\prime(\Omega_1\times\Omega_2)$. Then
\begin{equation*} 
\begin{split}\label{eq:bswf} 
\WFcl(A u) \subset \WFcl(u) & \cup \{(x_1,x_2,0,\xi_2); \exists y_1 \in \Omega_1 : (y_1,x_2,0,\xi_2) \in \WFcl(u) \} \\
&\cup \{(x_1,x_2,\xi_1,0) ; \exists y_2 \in \Omega_2 : (x_1,y_2,\xi_1,0) \in \WFcl(u)\}.
\end{split}
\end{equation*}
\begin{proof}
Use Theorem \ref{theo:wfkern} and Lemma \ref{lem:kerwf}.
\end{proof}
\end{coro}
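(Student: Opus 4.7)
The plan is to apply Lemma \ref{lem:kerwf} directly to the Schwartz kernel $\mathcal{K}_A$ of $A$, with the roles of $\Omega_1$ and $\Omega_2$ in that lemma both played by the product manifold $\Omega_1\times\Omega_2$ (so that $A$ is regarded as an operator $\mathcal{C}_0^\infty(\Omega_1\times\Omega_2)\to\diprimo(\Omega_1\times\Omega_2)$). The necessary estimate on $\WFcl(\mathcal{K}_A)$ is already supplied by Theorem \ref{theo:wfkern}, so the entire argument reduces to bookkeeping: I have to translate that estimate into the twisted object $\WF'(\mathcal{K}_A)$, verify that the two auxiliary sets in Lemma \ref{lem:kerwf} are empty, and then compose the resulting relation with $\WFcl(u)$.

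First I will form $\WF'(\mathcal{K}_A)$ by flipping the sign of the covectors attached to the second pair of base variables $(y_1,y_2)$. The three components in Theorem \ref{theo:wfkern} become, respectively, the symmetric diagonal $\{(x_1,x_2,x_1,x_2;\xi_1,\xi_2,\xi_1,\xi_2)\}$ and the two partial symmetric diagonals $\{(x_1,x_2,x_1,y_2;\xi_1,0,\xi_1,0)\}$ and $\{(x_1,x_2,y_1,x_2;0,\xi_2,0,\xi_2)\}$. In each of these three pieces the two halves of the covector coincide; in particular, forcing either half to vanish forces the other half to vanish as well. Hence both auxiliary sets $\WF'_{\Omega_1\times\Omega_2}(\mathcal{K}_A)$ (in the input and output roles) are empty, the compatibility hypothesis of Lemma \ref{lem:kerwf} is trivially satisfied, and the extra term $\WF'_{\Omega_1}(K)$ in the conclusion of the lemma contributes nothing.

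It then remains to compute the image $\WF'(\mathcal{K}_A)(\WFcl(u))$. The diagonal piece sends every $(y_1,y_2,\eta_1,\eta_2)\in\WFcl(u)$ to itself, giving the term $\WFcl(u)$. The second piece matches a point $(y_1,y_2,\eta_1,0)\in\WFcl(u)$ to the output points $(y_1,x_2,\eta_1,0)$ with $x_2\in\Omega_2$ arbitrary, which is exactly the $\{(x_1,x_2,\xi_1,0):\exists y_2\in\Omega_2,\ (x_1,y_2,\xi_1,0)\in\WFcl(u)\}$ summand of the corollary. The third piece yields, symmetrically, the $\{(x_1,x_2,0,\xi_2):\exists y_1\in\Omega_1,\ (y_1,x_2,0,\xi_2)\in\WFcl(u)\}$ summand. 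Lemma \ref{lem:kerwf} now delivers the claimed inclusion.

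There is no genuine obstacle here; the only point requiring care is the sign convention built into $\WF'$, together with the need to track the three distinct roles simultaneously played by the product $\Omega_1\times\Omega_2$ (as the base of $u$, as the base of $Au$, and as each of the two factors in the kernel's base). Once the three pieces of $\WFcl(\mathcal{K}_A)$ have been correctly rewritten, the conclusion is a direct unwinding of definitions.
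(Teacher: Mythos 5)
Your proof is correct and is precisely the argument the paper intends by its one-line ``Use Theorem \ref{theo:wfkern} and Lemma \ref{lem:kerwf}'': you form $\WF'(\mathcal{K}_A)$ from the estimate in Theorem \ref{theo:wfkern}, observe that the two auxiliary sets of Lemma \ref{lem:kerwf} vanish because the covector halves coincide on each component, and compose the resulting relation with $\WFcl(u)$. The bookkeeping of the three pieces and the sign convention is carried out correctly.
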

\noindent Based on this observation we find that bisingular operators are microlocal with respect to a modified version of the classical wave front set. It is obtained by dropping the information about the precise location of the singularities with the covariable $\xi_1=0$ or $\xi_2=0$ in the corresponding variable.
\begin{pipi} \label{prop:bswf}
Let $A \in \elbisi{m_1}{m_2}(\Omega_1, \Omega_2)$, $u\in\mathcal{E}^\prime(\Omega_1\times\Omega_2)$. Then
\begin{equation*} \label{deff:bswf} 
\begin{split}
\WFcl(Au) &\subset \widetilde{\WFcl}(u) \\
&:= \{(x_1,x_2,\xi_1,\xi_2) : (x_1,x_2,\xi_1,\xi_2) \in \WFcl(u) ; |\xi_1|\,|\xi_2| \neq 0\} \\
\qquad &\cup \{(x_1,x_2,\xi_1,0) ; \exists y_2 \in \Omega_2 : (x_1,y_2,\xi_1,0) \in \WFcl(u)\}\\ 
\qquad &\cup \{(x_1,x_2,0,\xi_2); \exists y_1 \in \Omega_1 : (y_1,x_2,0,\xi_2) \in \WFcl(u) \}.
\end{split}
\end{equation*}
\end{pipi}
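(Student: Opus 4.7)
The plan is to obtain Proposition \ref{prop:bswf} as a direct packaging of the Corollary that immediately precedes it. That Corollary, derived by feeding Theorem \ref{theo:wfkern} (the kernel estimate for bisingular operators) into Lemma \ref{lem:kerwf} (the kernel--wave-front relation), already yields
$$\WFcl(Au) \subset \WFcl(u) \cup R_1 \cup R_2,$$
where $R_1 = \{(x_1,x_2,0,\xi_2) : \exists y_1 \in \Omega_1, (y_1,x_2,0,\xi_2) \in \WFcl(u)\}$ and $R_2 = \{(x_1,x_2,\xi_1,0) : \exists y_2 \in \Omega_2, (x_1,y_2,\xi_1,0) \in \WFcl(u)\}$. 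The sets $R_1$ and $R_2$ are verbatim the third and second components of $\widetilde{\WFcl}(u)$, so the only task remaining is to verify the inclusion $\WFcl(u) \subset \widetilde{\WFcl}(u)$.

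For that step I would partition $\WFcl(u)$ according to the location of the covariable. Since $\WFcl(u)$ excludes the zero section, every $(x_1,x_2,\xi_1,\xi_2) \in \WFcl(u)$ falls into exactly one of three mutually exclusive cases. In the first case, $|\xi_1|\,|\xi_2| \neq 0$, so the point is covered by the first component of $\widetilde{\WFcl}(u)$ by definition. In the second case $\xi_2 = 0$ and necessarily $\xi_1 \neq 0$; choosing $y_2 = x_2$ shows that $(x_1,x_2,\xi_1,0)$ belongs to the second component. The third case $\xi_1 = 0$, $\xi_2 \neq 0$ is symmetric: choose $y_1 = x_1$ to place the point in the third component.

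Combining $\WFcl(u) \subset \widetilde{\WFcl}(u)$ with the inclusion $R_1 \cup R_2 \subset \widetilde{\WFcl}(u)$ established by inspection yields the stated microlocal estimate. I do not anticipate any genuine obstacle here; the proposition is essentially a cosmetic repackaging of the Corollary, and all of the analytical content sits upstream in Theorem \ref{theo:wfkern}, whose proof is the only nontrivial step (an integral regularization argument modeled on the classical tensor-product estimate of Lemma \ref{lem:wftens}).
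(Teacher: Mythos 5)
Your argument is correct and matches the paper's (implicit) reasoning: the Proposition is presented immediately after the Corollary with the remark ``Based on this observation,'' so the intended derivation is exactly the repackaging you describe, and the three-case check that $\WFcl(u)\subset\widetilde{\WFcl}(u)$ is the only detail the paper leaves unsaid.
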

\begin{pipi}\label{pipi:equal}
Let $A 	\in \mathrm{L}^{m_1,m_2}_{\mathrm{cl}}(\Omega_1,\Omega_2)$ be bi-elliptic. Then $$\widetilde{\WFcl}(Au) = \widetilde{\WFcl}(u).$$
\begin{proof}
We already know that $\widetilde{\WFcl}(Au)\subset\widetilde{\WFcl}(u)$.\\
 Now, $A 	\in \mathrm{L}^{m_1,m_2}_{\mathrm{cl}}(\Omega_1,\Omega_2)$, therefore there exists $B 	\in \mathrm{L}^{-m_1,-m_2}_{\mathrm{cl}}(\Omega_1,\Omega_2)$ such that $BA - I = K \in  \mathrm{L}^{-\infty,-\infty}(\Omega_1,\Omega_2)$. Thus
\begin{equation*}
\begin{split}
\widetilde{\WFcl}(u) := \widetilde{\WFcl} ((BA - K) u ) &\subset \widetilde{\WFcl}( BA u) \cup \widetilde{\WFcl}( Ku) \\
& \subset \widetilde{\WFcl}(Au) \cup \emptyset = \widetilde{\WFcl}(Au).
\end{split}
\end{equation*} 
\end{proof}
\end{pipi}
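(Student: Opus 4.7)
The plan is to establish the two inclusions separately. The inclusion $\widetilde{\WFcl}(Au) \subset \widetilde{\WFcl}(u)$ is immediate: since $A \in \elbisi{m_1}{m_2}_{\mathrm{cl}}(\Omega_1,\Omega_2) \subset \elbisi{m_1}{m_2}(\Omega_1,\Omega_2)$, this is just Proposition \ref{prop:bswf} applied to $A$. So the substantive work is the reverse inclusion.

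For the reverse inclusion, I would exploit bi-ellipticity. By the parametrix theorem stated just after Definition \ref{def:biell}, there exists $B \in \elbisi{-m_1}{-m_2}_{\mathrm{cl}}(\Omega_1,\Omega_2)$ and a smoothing operator $K \in \elbisi{-\infty}{-\infty}(\Omega_1,\Omega_2)$ with $BA = I + K$, so we may write
\begin{equation*}
u = BAu - Ku.
\end{equation*}
The idea is to bound $\widetilde{\WFcl}(u)$ via the wave front sets of the two right-hand-side terms. Three ingredients are needed: (i) subadditivity of $\widetilde{\WFcl}$ under sums, i.e. $\widetilde{\WFcl}(v_1+v_2) \subset \widetilde{\WFcl}(v_1) \cup \widetilde{\WFcl}(v_2)$; (ii) the fact that $Ku \in \Sm(\Omega_1\times\Omega_2)$ because $K$ is smoothing, so $\widetilde{\WFcl}(Ku) = \emptyset$; and (iii) a further application of Proposition \ref{prop:bswf} to the bisingular operator $B$ acting on $Au$, which gives $\widetilde{\WFcl}(BAu) \subset \widetilde{\WFcl}(Au)$. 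Putting these together yields
\begin{equation*}
\widetilde{\WFcl}(u) \subset \widetilde{\WFcl}(BAu) \cup \widetilde{\WFcl}(Ku) \subset \widetilde{\WFcl}(Au) \cup \emptyset = \widetilde{\WFcl}(Au),
\end{equation*}
closing the argument.

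The only point not already in the toolkit is ingredient (i), subadditivity of $\widetilde{\WFcl}$. Since $\widetilde{\WFcl}$ is a union of three components, each built from $\WFcl$ by either intersecting with a conical set (the piece with $|\xi_1||\xi_2|\neq 0$) or by enlarging via an existential quantifier over a transverse variable (the two pieces with one covariable vanishing), and since the classical wave front set is subadditive under sums, each component of $\widetilde{\WFcl}$ inherits this monotonicity by inspection. This is a short set-theoretic check rather than an obstacle, and in fact it is the only place where the precise shape of $\widetilde{\WFcl}$ matters; the skeleton of the argument is the standard ``parametrix plus smoothing remainder'' strategy, with Proposition \ref{prop:bswf} playing the role that microlocality plays for the classical calculus.
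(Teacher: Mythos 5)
Your proof is correct and follows essentially the same route as the paper: parametrix from bi-ellipticity, write $u = BAu - Ku$, then apply subadditivity of $\widetilde{\WFcl}$, the smoothing of $K$, and Proposition \ref{prop:bswf} applied to $B$. The only addition is that you explicitly flag the subadditivity of $\widetilde{\WFcl}$ as a point to verify, which the paper silently takes for granted; that is a reasonable bit of care, and your sketch of why it holds componentwise is fine.
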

\noindent This notion encourages to study microlocal properties of bisingular operators with respect to this modified notion of $\widetilde{\WF}(u)$. However, all of the previous results have been obtained by the study of distribution kernels. It is far more desirable to have a notion of singularities in terms of the actual calculus. This will be provided in the next section.


\section{A wave front set in terms of bisingular operators}\label{sec:3}


\subsection{Microlocal properties of bisingular operators}


While being the description that naturally arises when analysing the kernels of bisingular operators, the notion of wave front set used in the previous section has several drawbacks: it is defined in terms of the H\"ormander wave front set, so in order to calculate it one first has to find that set and then ``forget information". Also, it is not defined in terms of the bisingular calculus, but indeed with respect to the pseudodifferential one.\\
In the following we establish a second notion that does not have these drawbacks. In fact it turns out to be quite similar to the notion introduced in \cite{CM03} for the $SG$-calculus. This is not surprising, as there is a strong similarity in the formulas arising in both calculi. \newline
\noindent From now on, all the pseudodifferential operators will be assumed as properly supported, and all the bisingular operators to be classical.
\begin{deff} \label{def:wft}
Let $u\in\dist(\Omega_1\times\Omega_2)$. We define $\WFbi{}(u)\subset\Omega_1\times\Omega_2\times(\RR^{n_1+n_2}\setminus 0) $ 
as 
	$$\WFbi{}(u)=\WFbi{1}(u)\cup \WFbi{2}(u)\cup \WFbi{12}(u)$$
where
\begin{itemize}
	\item $(x_1,x_2,\xi_1,0)$ is not in $\WFbi{1}(u)$ if there exists an $A\in \mathrm{L}^0_\mathrm{cl}(\Omega_1)$ non-characteristic at $(x_1,\xi_1)$ such that 
	\begin{equation} \label{eq:wft1}
	(A\otimes I)u\in\Sm(\Omega_1,\diprimo(\Omega_2)).
	\end{equation}
	\item $(x_1,x_2,0,\xi_2)$ is not in $\WFbi{2}(u)$ if there exists an $A\in \mathrm{L}^0_\mathrm{cl}(\Omega_2)$ non-characteristic at $(x_2,\xi_2)$ such that 
	\begin{equation} \label{eq:wft2}	
	(I\otimes A)u\in\Sm(\Omega_2,\diprimo(\Omega_1)).
	\end{equation}
	\item $(x_1,x_2,\xi_1,\xi_2)$, $|\xi_1||\xi_2|\neq 0$, is not in $\WFbi{12}(u)$ if there exists a $A_i \in \mathrm{L}^0_\mathrm{cl}(\Omega_i)$, non-characteristic at $(x_i,\xi_i)$, $i=1,2$, such that 
	\begin{align} 
	(A_1 \otimes A_2) u&\in\Sm(\Omega_1\times\Omega_2) \label{eq:wft31}\\
	(A_1 \otimes I) u&\in\Sm(\Omega_1,\diprimo(\Omega_2)) \label{eq:wft32},\\ 
	(I \otimes A_2) u&\in\Sm(\Omega_2,\diprimo(\Omega_1)) \label{eq:wft33}.
  \end{align}
\end{itemize}
\end{deff}
\begin{remark}
\label{rem:123indep}
Note that the conditions \eqref{eq:wft32} and \eqref{eq:wft33} do not follow from \eqref{eq:wft31}: Take $u\in\diprimo(\RR\times\RR)$, $u=\delta(x-1)\delta(y+1)$ and $\psi$ smooth such that $\psi\equiv 1$ for $x>1/2$ and $\psi\equiv 0$ for $x\leq 0$. Then $(\psi(x)\otimes\psi(y))u=0$, as $(1,-1)\notin\supp(\psi\otimes\psi)$. However, for $g\in\mathcal{D}(\RR)$ with $g(-1)\neq 0$ we have $(\psi(x)\otimes I)u(.\otimes g)=\delta(x-1)g(-1)$, which is not smooth.
\end{remark}

\begin{remark}
For a distribution of the form $u = u_1 \otimes u_2$, we have that $\WFbi{12}(u) = \WFcl(u_1) \times \WFcl(u_2)$.
\end{remark}
\noindent In fact we have the following inclusion result:
\begin{lem}
\label{lem:wfcl}
If a point $(x_1,x_2,\xi_1,\xi_2)$, $|\xi_1||\xi_2|\neq 0$, is not in $\WFbi{12}(u)$, then it is not in $\WFcl(u)$.
\end{lem}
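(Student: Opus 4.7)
\emph{Proof proposal.} Let $(x_1,x_2,\xi_1,\xi_2)$ with $|\xi_1||\xi_2|\neq 0$ lie outside $\WFbi{12}(u)$, with witnesses $A_1,A_2$ as in Definition \ref{def:wft}. My plan is to extract from the three conditions \eqref{eq:wft31}--\eqref{eq:wft33} a clean microlocal statement about cutoffs $\psi_i\in\mathrm{L}^0_\mathrm{cl}(\Omega_i)$ whose full symbols are identically $1$ on conic neighbourhoods $\Gamma_i$ of $(x_i,\xi_i)$, and then deduce $(x_1,x_2,\xi_1,\xi_2)\notin\WFcl(u)$ via a tensor-product partition of unity.

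First I would use non-characteristicity of $A_i$ to construct parametrices $B_i\in\mathrm{L}^0_\mathrm{cl}(\Omega_i)$ together with cutoff operators $\psi_i=\chi_i(x_i,D_i)\in\mathrm{L}^0_\mathrm{cl}(\Omega_i)$ with $\chi_i\equiv1$ on $\Gamma_i$, satisfying $B_iA_i=\psi_i+R_i$ modulo a smoothing $R_i$. By Lemma \ref{lem:sminonevar}, the bisingular operator $B_1\otimes I$ preserves $\Sm(\Omega_1,\diprimo(\Omega_2))$ while $R_1\otimes I$ (of bi-order $(-\infty,0)$) sends all of $\diprimo(\Omega_1\times\Omega_2)$ into this space; applying $B_1\otimes I$ to \eqref{eq:wft32} then yields $(\psi_1\otimes I)u\in\Sm(\Omega_1,\diprimo(\Omega_2))$, and symmetrically $(I\otimes\psi_2)u\in\Sm(\Omega_2,\diprimo(\Omega_1))$. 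Applying $B_1\otimes B_2\in\mathrm{L}^{0,0}_\mathrm{cl}$ to \eqref{eq:wft31} and rearranging,
\begin{equation*}
(\psi_1\otimes\psi_2)u = (B_1A_1\otimes B_2A_2)u - (R_1\otimes\psi_2)u - (\psi_1\otimes R_2)u - (R_1\otimes R_2)u;
\end{equation*}
all four terms are in $\Sm(\Omega_1\times\Omega_2)$ (bisingular operators preserve $\Sm$, $(R_1\otimes R_2)u$ has smooth kernel, and the cross terms---written as $(R_1\otimes I)(I\otimes\psi_2)u$ and $(I\otimes R_2)(\psi_1\otimes I)u$---are smoothed by the second clause of Lemma \ref{lem:sminonevar}). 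Hence $(\psi_1\otimes\psi_2)u\in\Sm(\Omega_1\times\Omega_2)$.

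Now apply the algebraic identity $I\otimes I = \psi_1\otimes\psi_2 + \psi_1\otimes(I-\psi_2) + (I-\psi_1)\otimes\psi_2 + (I-\psi_1)\otimes(I-\psi_2)$ to $u$. The first summand is smooth. Each of the other three contains a factor $I-\psi_j$ whose full symbol $1-\chi_j$ vanishes on $\Gamma_j$, so that $\mathrm{essupp}(I-\psi_j)$ is disjoint from $(x_j,\xi_j)$. The Schwartz kernel of each remaining summand is a tensor product of pseudodifferential operator kernels (e.g.\ $\delta(x_1-y_1)K_{I-\psi_2}(x_2,y_2)$ for the second summand); estimating it via Lemma \ref{lem:wftens} and feeding the result through Lemma \ref{lem:kerwf}, the possible contributions to $\WFcl$ of the output at our target point split into (i) a ``diagonal'' one requiring $(x_j,\xi_j)\in\mathrm{essupp}(I-\psi_j)$, and (ii) two ``axial'' ones with $\xi_1=0$ or $\xi_2=0$. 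All three are excluded: (i) by the construction of $\psi_j$, and (ii) by $|\xi_1||\xi_2|\neq 0$. Combining, $(x_1,x_2,\xi_1,\xi_2)\notin\WFcl(u)$. The main obstacle is precisely this kernel-wave-front bookkeeping in the last step: the hypothesis $|\xi_1||\xi_2|\neq 0$ is exactly what rules out the axial contributions that Lemma \ref{lem:wftens} forces into the wave front set of every tensor-product kernel, and this is also the reason why the bi-wave front set must split into the off-axis component $\WFbi{12}$ and the axial components $\WFbi{1},\WFbi{2}$, with only the former being captured by $\WFcl$.
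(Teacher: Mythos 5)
Your proof is correct, but it takes a genuinely different route from the paper's. The paper's proof uses \emph{only} the condition \eqref{eq:wft31}: it picks a pseudodifferential symbol $\psi$ on $\Omega_1\times\Omega_2$, non-characteristic at $(x_1,x_2,\xi_1,\xi_2)$ and supported in the region where $\langle\xi_1\rangle\lesssim\langle\xi_2\rangle\lesssim\langle\xi_1\rangle$, and observes that the composition $\psi(x,D)\circ(A_1\otimes A_2)$ is then, modulo a smoothing remainder, an honest pseudodifferential operator of order $0$ on $\Omega_1\times\Omega_2$ --- the bisingular estimates collapse to H\"ormander estimates on the support of $\psi$ --- which is non-characteristic at the target and satisfies $Bu=\psi\big((A_1\otimes A_2)u\big)\in\Sm$. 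The conclusion then follows directly from the pseudodifferential characterization of $\WFcl$. Your proof instead exploits all three conditions \eqref{eq:wft31}--\eqref{eq:wft33} to produce genuine cutoffs $\psi_i\in\mathrm{L}^0_\mathrm{cl}(\Omega_i)$ with $(\psi_1\otimes\psi_2)u\in\Sm$, then runs a tensor-product partition of unity and a kernel wave front bookkeeping argument via Lemmas \ref{lem:wftens} and \ref{lem:kerwf}. Both are sound, but the paper's argument is substantially shorter, hypothesis-economical (it needs only the $12$-clause, which in fact suffices for this lemma), and makes transparent the structural remark following the lemma: in the conic region $|\xi_1||\xi_2|\neq 0$ one can pass from bisingular to pseudodifferential calculus by multiplying by a $\Psi$DO supported where the two covariables are comparable. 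Your argument, by contrast, makes the geometric mechanism (why the hypothesis $|\xi_1||\xi_2|\neq 0$ is needed to kill the axial contributions from tensor-product kernels) more explicit, at the cost of extra machinery; it also needs $u$ to be compactly supported (or a preliminary cutoff) for Lemma \ref{lem:kerwf} to apply. One minor slip: the Schwartz kernel of the second summand $\psi_1\otimes(I-\psi_2)$ is $K_{\psi_1}(x_1,y_1)K_{I-\psi_2}(x_2,y_2)$, not $\delta(x_1-y_1)K_{I-\psi_2}(x_2,y_2)$; this only over-estimates the wave front set of the kernel and does not affect the conclusion, since the excluding constraint is correctly attributed to the $I-\psi_j$ factor.
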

\begin{proof}
The proof is a variant of \cite{Hor91}, Proposition 2.8. By definition there exists $A:= A_1 \otimes A_2 \in \mathrm{L}^{0,0}(\Omega_1 \times \Omega_2)$, with $\sigma^{12}(A)(x_1,x_2,\xi_1,\xi_2) \neq 0$, such that $A u\in\Sm(\Omega_1\times\Omega_2)$. Now take a ($\Psi$DO symbol) $\psi(x_1,x_2,\xi_1,\xi_2)\in \mathrm{L}^0_\mathrm{cl}(\Omega_1\times\Omega_2)$ such that
\begin{itemize}
\item on the support of $\psi$ we have %
$\langle \xi_1\rangle\lesssim \langle \xi_2\rangle\lesssim \langle \xi_1\rangle$
\item $\psi$ is non-characteristic at $(x_1,x_2,\xi_1,\xi_2)$. 
\end{itemize}
Then the (bi-singular) operator product $$B:=\psi(x_1,x_2,D_1,D_2)\circ A(x_1, x_2,D_1, D_2)$$
yields a pseudodifferential operator of order 0, plus a smoothing remainder, by virtue of the above inequality on the support of $\psi$ and the symbol expand in Theorem \ref{thm:compos}. It has the following properties:
\begin{itemize}
\item its principal symbol is $\psi\cdot \sigma^{12}(A)$, and thus is non-characteristic (in the sense of $\Psi$DOs) at $(x_1,x_2,\xi_1,\xi_2)$ and of zero order.
\item $Bu= \psi(x_1,x_2,D_1,D_2) A(x_1,x_2,D_1,D_2)u\in \Sm$
\end{itemize}
This proves the claim.
\end{proof}
\begin{remark}
This lemma asserts that in the conic region where both covariables are non-vanishing we can pass from bisingular to pseudodifferential calculus by multiplying by a $\Psi$DO. This has the consequence that the two operator classes have similar microlocal properties (with respect to the classical wave front set) in that region. 
\end{remark}
\noindent The following Lemma gives a similar interpretation to the remaining components, illustrating the loss of localization of singularities already encountered in the previous section.

\begin{lem}
\label{lem:wfclproj}
Let $u\in\eeprimo(\Omega_1\times\Omega_2)$, $(x_1^0,\xi_1^0)\in\Omega_1\times(\RR^{n_1}\setminus 0)$. If for all $y\in\Omega_2$ we have $(x_1^0,y,\xi_1^0,0)\notin\WFcl(u)$ then for all $y\in\Omega_2$ we have $(x_1^0,y,\xi_1^0,0)\notin\WFbi{1}(u)$.\\
Similarly, if for all $x\in\Omega_1$ we have $(x,x_2^0,0,\xi_2^0)\notin\WFcl(u)$ then for all $x\in\Omega_1$ we have $(x,x_2^0,0,\xi_2^0)\notin\WFbi{2}(u)$.

\begin{proof}
We prove the claim for $\WFbi{1}$. Take $(x_1^0,y,\xi_1^0,0)\notin\WFcl(u)$. Then there exist a cut-off $\phi\in\Smc(\Omega_1\times\Omega_2)$ and a conic localizer $\psi_y\in\Sm(\RR^{n_1+n_2})$, non-vanishing in a (conic) neighbourhood $(x_1^0,y)$ and $(\xi_1^0,0)$ respectively, such that 
$$\psi_y(\xi_1,\xi_2)\mathcal{F}_{(x_1,x_2)\mapsto(\xi_1,\xi_2)}\{\phi(x_1,x_2) u\}\in\esse(\RR^{n_1+n_2}).$$
As this holds true for any $y$, and due to compactness of the support of $u$, there exists a cut-off $\phi_1\in\Smc(\Omega_1)$ such that for some conic localizer $\psi\in\Sm(\RR^{n_1+n_2})$
$$\psi(\xi_1,\xi_2)\mathcal{F}_{(x_1,x_2)\mapsto(\xi_1,\xi_2)}\{\phi(x_1) u\}\in\esse(\RR^{n_1+n_2}).$$
This means we can find a conic localizer $\psi_1\in\Sm(\RR^{n_1})$, non-vanishing around $\xi_1^0$, such that 
$\psi(\xi_1)\mathcal{F}_{(x_1,x_2)\mapsto(\xi_1,\xi_2)}\{\phi(x_1) u\}\in\Sm(\RR^{n_1+n_2})$, rapidly decaying with respect to the first variable $\xi_1$ for fixed $\xi_2$, and polynomially bounded everywhere, by the Paley-Wiener-Schwartz Theorem.\\
We define $A\in\mathrm{L}_\mathrm{cl}^0(\Omega_1)$ as the operator 
$$Av(y_1)=\mathcal{F}^{-1}_{\xi_1\mapsto y_1}(\psi(\xi_1)\mathcal{F}_{x_1\mapsto\xi_1}\{\phi(x_1) u\}.$$
By the assumptions on $\phi_1$ and $\psi_1$, $A$ is non-characteristic in the sense of pseudodifferential operators at $(x_1^0,\xi_1^0).$
But for any $f\in\mathcal{D}(\Omega_2)$ we have that $[(A\otimes I) u](f)(x_1)=\mathcal{F}^{-1}_{\xi_1\mapsto x_1}\langle \psi(\xi_1)\mathcal{F}_{(y_1,y_2)\mapsto(\xi_1,\xi_2)}\{\phi(y_1) u\},\widehat{f}\rangle$ is a smooth function, which means $(A\otimes I) u\in\mathcal{C}^\infty(\Omega_1,\diprimo(\Omega_2))$.
\end{proof}
\end{lem}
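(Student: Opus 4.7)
The plan is to construct a single operator $A \in \mathrm{L}^0_\mathrm{cl}(\Omega_1)$, non-characteristic at $(x_1^0, \xi_1^0)$, such that $(A \otimes I) u \in \Sm(\Omega_1, \diprimo(\Omega_2))$. Observe that the non-characteristicity condition in the definition of $\WFbi{1}$ refers only to $(x_1, \xi_1)$, and the smoothness condition is global in $x_2$; hence a single such $A$ simultaneously witnesses that $(x_1^0, y, \xi_1^0, 0) \notin \WFbi{1}(u)$ for every $y \in \Omega_2$. The statement for $\WFbi{2}$ follows by interchanging the roles of the two factors.

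The construction proceeds in three steps. First, for each fixed $y \in \Omega_2$ the hypothesis yields a cut-off $\chi_y \in \Smc(\Omega_1 \times \Omega_2)$ with $\chi_y(x_1^0, y) \neq 0$ and a conic neighborhood $V_y \subset \RR^{n_1+n_2}\setminus 0$ of $(\xi_1^0, 0)$ on which $\widehat{\chi_y u}$ is rapidly decreasing. Since $u \in \eeprimo(\Omega_1 \times \Omega_2)$, the second projection of $\supp(u)$ is compact; extracting a finite subcover $\{\chi_{y_j}\}$ and subordinating a partition of unity in $x_2$ to it would produce an open neighborhood $U' \ni x_1^0$ such that any $\phi_1 \in \Smc(U')$ yields a product cut-off satisfying $\widehat{(\phi_1 \otimes 1) u}$ rapidly decreasing on the joint conic neighborhood $V := \bigcap_j V_{y_j}$ of $(\xi_1^0, 0)$. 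Choose such a $\phi_1$ with $\phi_1(x_1^0) \neq 0$. Second, pick a classical symbol $\psi \in \mathrm{S}^0_\mathrm{cl}(\RR^{n_1})$ with $\psi(\xi_1^0) \neq 0$ and $\supp \psi \subset \Gamma_1$, where $\Gamma_1 \subset \RR^{n_1}\setminus 0$ is a conic neighborhood of $\xi_1^0$ so narrow that for some $c > 0$ the wedge $\{(\xi_1, \xi_2) : \xi_1 \in \Gamma_1,\, |\xi_2| \leq c |\xi_1|\}$ lies inside $V$. Third, set $A := \psi(D_{x_1}) \phi_1(x_1) \in \mathrm{L}^0_\mathrm{cl}(\Omega_1)$ (properly supported after standard modification), whose principal symbol $\psi(\xi_1) \phi_1(x_1)$ is non-vanishing at $(x_1^0, \xi_1^0)$.

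To verify $(A \otimes I) u \in \Sm(\Omega_1, \diprimo(\Omega_2))$, fix $f \in \di(\Omega_2)$ and compute the partial Fourier transform of the slice $x_1 \mapsto [(A \otimes I)u](x_1, f)$: up to constants it equals $\psi(\xi_1) \int \widehat{(\phi_1 \otimes 1) u}(\xi_1, -\xi_2)\, \hat{f}(\xi_2)\, d\xi_2$. I would split this integral at $|\xi_2| = c |\xi_1|$. On $\{|\xi_2| \leq c |\xi_1|\}$ the argument $(\xi_1, -\xi_2)$ lies in $V$, so the integrand decays faster than any negative power of $(1+|\xi_1|)$, while $\hat{f}$ guarantees integrability in $\xi_2$. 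On $\{|\xi_2| > c |\xi_1|\}$ one combines the Paley--Wiener--Schwartz polynomial bound on $\widehat{(\phi_1 \otimes 1) u}$ with the Schwartz decay of $\hat{f}$: any negative power of $(1+|\xi_2|)$ dominates, and since $|\xi_2| \gtrsim |\xi_1|$ this transfers into arbitrary negative powers of $(1+|\xi_1|)$. An identical argument for $\xi_1$-derivatives then gives smoothness of the slice in $x_1$, and this holds for every $f$, as required.

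The main obstacle is conceptual rather than computational: a conic neighborhood of $(\xi_1^0, 0)$ in $\RR^{n_1+n_2}\setminus 0$ is \emph{not} of the form $\Gamma_1 \times \RR^{n_2}$ and only contains the wedge $|\xi_2| \lesssim |\xi_1|$ inside any such product. Consequently the joint $(\xi_1, \xi_2)$-localization provided by $\WFcl$ does not directly turn into rapid decay in $\xi_1$ alone after integration against $\hat{f}$. The split into two regimes, with the Paley--Wiener polynomial bound beaten by Schwartz decay in the complementary region, is precisely what bridges this gap; the compactness reduction in the first step is what allows one to take a cut-off depending only on $x_1$, which the tensor structure of $A \otimes I$ demands.
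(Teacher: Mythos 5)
Your proposal is correct and follows essentially the same route as the paper: localize near each $(x_1^0,y,\xi_1^0,0)$, use compactness of $\supp u$ to pass to a cut-off depending on $x_1$ alone, and build $A=\psi_1(D_1)\phi_1(x_1)$ with $\psi_1$ supported in a narrow cone around $\xi_1^0$. What you add that the paper leaves implicit is the decisive observation about the geometry of conic neighborhoods of $(\xi_1^0,0)$: such a neighborhood contains only a wedge $|\xi_2|\lesssim|\xi_1|$ out of any product cone $\Gamma_1\times\RR^{n_2}$, and the split of the $\xi_2$-integral at $|\xi_2|=c|\xi_1|$, playing the Paley--Wiener--Schwartz polynomial bound against the Schwartz decay of $\hat f$ on the complementary region, is exactly what converts the joint conic decay into rapid decay in $\xi_1$ alone. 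The paper instead asserts directly that one can find $\psi_1$ so that $\psi_1(\xi_1)\widehat{(\phi_1\otimes 1)u}$ is rapidly decaying in $\xi_1$ for fixed $\xi_2$ and polynomially bounded everywhere, and then states the smoothness of the slice without carrying out the integral estimate; your version makes this step fully explicit and is a welcome clarification.
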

\begin{pipi}
\label{pipi:regu}
Let $u\in\mathcal{E}^\prime(\Omega_1\times\Omega_2)$. Then $$\WFbi{}(u)=\emptyset\Leftrightarrow u\in\Sm(\Omega_1\times\Omega_2).$$
\end{pipi}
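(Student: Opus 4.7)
The implication $(\Leftarrow)$ is immediate: if $u \in \Sm(\Omega_1 \times \Omega_2)$ then $u$ belongs in particular to $\Sm(\Omega_1, \diprimo(\Omega_2))$ and $\Sm(\Omega_2, \diprimo(\Omega_1))$, so choosing the identity as witness operator verifies each of \eqref{eq:wft1}, \eqref{eq:wft2}, \eqref{eq:wft31}--\eqref{eq:wft33} at every phase-space point, giving $\WFbi{}(u) = \emptyset$.

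For $(\Rightarrow)$, the plan is to show $\WFcl(u) = \emptyset$, which for $u \in \eeprimo$ is equivalent to $u \in \Sm(\Omega_1 \times \Omega_2)$. I would fix a candidate wave-front point $(x_1^0, x_2^0, \xi_1^0, \xi_2^0) \in \Omega_1 \times \Omega_2 \times (\RR^{n_1 + n_2} \setminus 0)$ and distinguish three cases by the vanishing of the covariables. When $|\xi_1^0|\,|\xi_2^0| \neq 0$, Lemma \ref{lem:wfcl} combined with $\WFbi{12}(u) = \emptyset$ yields the conclusion at once; the two cases with one covariable zero are symmetric, and I would only treat $\xi_2^0 = 0$, $\xi_1^0 \neq 0$.

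In this case the hypothesis $\WFbi{1}(u) = \emptyset$ furnishes $A \in \mathrm{L}^0_{\mathrm{cl}}(\Omega_1)$ non-characteristic at $(x_1^0, \xi_1^0)$ with $v := (A \otimes I)u \in \Sm(\Omega_1, \diprimo(\Omega_2))$. To invert $A$ microlocally, I would pick a left parametrix $B \in \mathrm{L}^0_{\mathrm{cl}}(\Omega_1)$ with $BA = I + R$ and $(x_1^0, \xi_1^0) \notin \WF'(R)$, and decompose
\[
u = (B \otimes I) v - (R \otimes I) u.
\]
By Lemma \ref{lem:sminonevar} the first summand lies in $\Sm(\Omega_1, \diprimo(\Omega_2))$; I would then invoke the auxiliary fact that every compactly supported distribution in this space has its classical wave front disjoint from $\{(x_1, x_2, \xi_1, 0) : \xi_1 \neq 0\}$. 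For the second summand, the Schwartz kernel factors as $K_R(x_1, y_1)\delta(x_2 - y_2)$, so Lemmas \ref{lem:wftens} and \ref{lem:kerwf} together with $(x_1^0, \xi_1^0) \notin \WF'(K_R)$ yield $(x_1^0, x_2^0, \xi_1^0, 0) \notin \WFcl((R \otimes I)u)$. Assembling the three cases then gives $\WFcl(u) = \emptyset$.

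The principal obstacle is the auxiliary wave-front containment for $\Sm(\Omega_1, \diprimo(\Omega_2))$-distributions, which serves as a ``converse'' of Lemma \ref{lem:wfclproj} without being its literal contrapositive. Proving it requires delicate Fourier analysis: since the $x_2$-order of $\partial_{x_1}^\alpha v$ generally grows with $|\alpha|$, the naive bound $|\xi_1|^{-|\alpha|}\langle \xi_2\rangle^{N+|\alpha|}$ is insufficient, and one must exploit the oscillatory coupling between $x_1$-differentiation of $v$ and the testing exponential $e^{-ix_2\xi_2}$, so that the effective decay direction is transverse to $\xi_1 + \xi_2 = 0$, which the relevant cones around $(\xi_1^0, 0)$ in any case avoid.
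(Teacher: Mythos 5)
Your proof takes a genuinely different route from the paper's. For the directions with $\xi_2=0$ the paper does not pass through $\WFcl$ at all: by compactness of the cosphere bundle of $\Omega_1$ it patches the operators $A$ provided by $\WFbi{1}(u)=\emptyset$ into a \emph{globally} elliptic $\tilde A\in\mathrm{L}^0_\mathrm{cl}(\Omega_1)$, takes a genuine parametrix, concludes that $Bu\in\Sm(\Omega_1\times\Omega_2)$ for \emph{every} $B\in\mathrm{L}^{m,-\infty}(\Omega_1\times\Omega_2)$, and then inserts explicit symbols $b(x,\xi)=\phi(x_1,x_2)f(\xi_2)$ to read off that $\widehat u$ is rapidly decaying in $\xi_1$ (and, symmetrically, in $\xi_2$); combined with Lemma \ref{lem:wfcl} this gives $u\in\Sm$ directly. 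You instead aim to show $\WFcl(u)=\emptyset$ pointwise, using a \emph{microlocal} parametrix for a single $A$ and the decomposition $u=(B\otimes I)v-(R\otimes I)u$. That skeleton is sound, and the treatment of $(R\otimes I)u$ via Lemmas \ref{lem:wftens} and \ref{lem:kerwf} works out, but the weight of the argument is shifted onto the ``auxiliary fact,'' which the paper never needs.

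That auxiliary claim is where the genuine gap sits, and the heuristic you offer for it points in the wrong direction. You assert that the $x_2$-order of $\partial_{x_1}^\alpha v$ must grow with $|\alpha|$, forcing some ``oscillatory coupling'' transverse to $\xi_1+\xi_2=0$. For the distributions actually arising in your proof — $v=(A\otimes I)u$ and $(B\otimes I)v=(BA\otimes I)u$ with $u\in\eeprimo$ — this is not so: $\partial_{x_1}^\alpha v=((\partial^\alpha\circ A)\otimes I)u$ with $(\partial^\alpha\circ A)\otimes I\in\elbisi{|\alpha|}{0}$, which maps $\sobol{s_1}{s_2}$ to $\sobol{s_1-|\alpha|}{s_2}$ and so leaves the $\xi_2$-regularity untouched; since $u\in\eeprimo$ lies in some $\sobol{s_1}{s_2}$, the $x_2$-order of $\partial_{x_1}^\alpha v$ is \emph{uniform} in $\alpha$. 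With that uniformity the very bound you discard as ``insufficient'' (with $N_0$ in place of $N+|\alpha|$) already yields, on a cone $|\xi_2|\leq\varepsilon|\xi_1|$, the estimate $|\widehat{\phi v}(\xi)|\lesssim_\alpha\langle\xi\rangle^{-|\alpha|+N_0}$, which is rapid decay. There is no oscillatory cancellation to exploit and the line $\xi_1+\xi_2=0$ plays no role; the saving device is purely the bisingular $(|\alpha|,0)$ order count. Stated for an \emph{arbitrary} compactly supported element of $\Sm(\Omega_1,\diprimo(\Omega_2))$ the auxiliary fact would require a separate uniformity argument, so you should formulate and prove it only for the distributions of the form $(P\otimes I)u$, $P\in\mathrm{L}^0(\Omega_1)$, that your proof actually uses.
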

\begin{proof}
Assume $\WFbi{}(u)=\emptyset$. Then, by virtue of Lemma \ref{lem:wfcl}, we have $\WFcl(u)\cap\{(x_1,x_2,\xi_1,\xi_2)|:|\xi_1||\xi_2|\neq 0\}=\emptyset$. Thus $\widehat{u}$ is rapidly decaying on any ray $\RR\cdot(\xi_1,\xi_2)$ where $|\xi_1||\xi_2|\neq 0.$\\
As also $\WFbi{1}(u)=\emptyset$ we can find for each $(x_1,x_2,\xi_1,\xi_2)$ an $A\in \mathrm{L}^0_\mathrm{cl}(\Omega_1)$ non-characteristic at $(x_1,\xi_1)$ such that, by Lemma \ref{lem:sminonevar}, for any  $B\in\mathrm{L}^{m,-\infty}(\Omega_1\times\Omega_2)$ we have $B(I\otimes A)u\in\Sm(\Omega_1\times\Omega_2)$. By compactness and a parametrix construction we can thus conclude that for all $ B\in\mathrm{L}^{m,-\infty}(\Omega_1\times\Omega_2)$ we get $B u\in\Sm(\Omega_1\times\Omega_2).$
Now pick $\phi\in \mathcal{C}_0^{\infty}(\Omega_1\times\Omega_2)$ with $\phi\equiv 1$ on the support of $u$, and define $b(x_1,x_2,\xi_1,\xi_2)=\phi(x_1,x_2)f(\xi_2)$, with $f\in \esse(\RR^{d_2})$. Then 
$$\esse(\RR^{d_1+d_2})\ni\mathcal{F}(Bu)=\mathcal{F}\big(b(x_1,x_2,D_1,D_2)u\big)=(1\otimes f)\widehat{u}.$$
As $f$ was arbitrary and rapidly decaying, this means that $\widehat{u}$ must already be rapidly decaying in the first variable. Repeating the argument for the second variable proves the assertion.
\end{proof}
\noindent Using a parametrix construction, we get by the same arguments:
\begin{pipi}
Let $u\in\mathcal{E}^\prime(\Omega_1\times\Omega_2)$. Then 
\begin{align*}
\WFbi{1}(u)&=\emptyset\Leftrightarrow u\in\Sm(\Omega_1,\diprimo(\Omega_2)),\\
\WFbi{2}(u)&=\emptyset\Leftrightarrow u\in\Sm(\Omega_2,\diprimo(\Omega_1)).
\end{align*}
\end{pipi}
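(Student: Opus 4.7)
The plan is to follow the structure of the proof of Proposition~\ref{pipi:regu}, but to bypass its compactness and parametrix step, because each of the hypotheses $\WFbi{1}(u)=\emptyset$ and $\WFbi{2}(u)=\emptyset$ reduces the problem, after testing against a test function in the other variable, to a classical microlocal statement in one variable. I will prove the first equivalence; the second then follows by exchanging the roles of $\Omega_1$ and $\Omega_2$.

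For the direction $(\Leftarrow)$, given $u\in\Sm(\Omega_1,\diprimo(\Omega_2))$, I would take $A=I\in\mathrm{L}^0_\mathrm{cl}(\Omega_1)$, which has principal symbol $1$ and is non-characteristic at every point. Then $(A\otimes I)u=u\in\Sm(\Omega_1,\diprimo(\Omega_2))$, so the defining condition of $\WFbi{1}$ fails at every $(x_1,x_2,\xi_1,0)$ with $\xi_1\neq 0$, giving $\WFbi{1}(u)=\emptyset$.

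For $(\Rightarrow)$, assume $\WFbi{1}(u)=\emptyset$ and fix $f\in\mathcal{D}(\Omega_2)$. Define $u_f\in\mathcal{E}^\prime(\Omega_1)$ by $u_f(g):=u(g\otimes f)$. For any $A\in\mathrm{L}^0_\mathrm{cl}(\Omega_1)$, a direct computation with transposes yields
\begin{equation*}
\bigl[(A\otimes I)u\bigr](g\otimes f)=\langle u,(A^{t}g)\otimes f\rangle=u_f(A^{t}g)=(Au_f)(g),
\end{equation*}
so $[(A\otimes I)u](\,\cdot\,\otimes f)=Au_f$ as distributions on $\Omega_1$. Consequently, $(A\otimes I)u\in\Sm(\Omega_1,\diprimo(\Omega_2))$ forces $Au_f\in\Sm(\Omega_1)$. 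Given any $(x_1,\xi_1)\in\Omega_1\times(\RR^{n_1}\setminus 0)$, the hypothesis $\WFbi{1}(u)=\emptyset$ furnishes such an $A$, non-characteristic at $(x_1,\xi_1)$, with $Au_f\in\Sm(\Omega_1)$; the classical characterization of $\WFcl$ then gives $(x_1,\xi_1)\notin\WFcl(u_f)$. As $(x_1,\xi_1)$ was arbitrary, $\WFcl(u_f)=\emptyset$, whence $u_f\in\Sm(\Omega_1)$. Since $f\in\mathcal{D}(\Omega_2)$ was also arbitrary, $u\in\Sm(\Omega_1,\diprimo(\Omega_2))$.

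No serious obstacle is expected. The only step demanding care is the dual identification $[(A\otimes I)u](\,\cdot\,\otimes f)=Au_f$, which is the mechanism that converts the bisingular microlocal condition at $(x_1,\xi_1)$ into a classical microlocal one for $u_f$. In the proof of Proposition~\ref{pipi:regu} this identification had to be supplemented by Lemma~\ref{lem:sminonevar}, a compactness argument, a parametrix construction and a Fourier-side decay argument in order to recover full smoothness; here, because we only seek smoothness in one variable, the reduction to $\WFcl(u_f)=\emptyset$ already suffices.
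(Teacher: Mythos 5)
Your proof is correct, and it takes a cleaner, more direct route than the one the paper gestures at. The paper asserts that the claim follows ``by the same arguments'' as Proposition~\ref{pipi:regu}, ``using a parametrix construction'' --- i.e.\ one is expected to patch the pointwise non-characteristic $A$'s (via compactness of $\Omega_1$) into a globally elliptic operator, construct a parametrix $P$ with $PA=I-R$, and then transfer smoothness in the first slot through the identity $u=(P\otimes I)(A\otimes I)u+(R\otimes I)u$, invoking the mapping properties of Lemma~\ref{lem:sminonevar}. You instead observe that the defining condition of $\Sm(\Omega_1,\diprimo(\Omega_2))$ is tested one $f\in\mathcal{D}(\Omega_2)$ at a time, and that the duality identity $[(A\otimes I)u](\,\cdot\,\otimes f)=Au_f$ converts the hypothesis on $(A\otimes I)u$ into the statement $Au_f\in\Sm(\Omega_1)$. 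From there the conclusion $\WFcl(u_f)=\emptyset\Rightarrow u_f\in\Sm(\Omega_1)$ is classical one-variable microlocal analysis, and no patching or parametrix needs to be performed at the bisingular level. Your route buys brevity and delegates the compactness/parametrix machinery to the standard pseudodifferential theory on $\Omega_1$; the paper's route keeps everything internal to the bisingular calculus and reuses the scheme already set up for Proposition~\ref{pipi:regu}. The one step you should make explicit is that the operator $A$ provided by the definition of $\WFbi{1}$ is allowed to depend on $(x_1,\xi_1)$ but not on $f$, so that for fixed $f$ you indeed obtain, for each $(x_1,\xi_1)$, a non-characteristic $A$ with $Au_f\in\Sm(\Omega_1)$ --- this is the quantifier order that makes $\WFcl(u_f)=\emptyset$ legitimate; you state it correctly but it is worth flagging.
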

\begin{remark}
Note that $u\in\Sm(\Omega_1,\diprimo(\Omega_2))\bigcap\Sm(\Omega_2,\diprimo(\Omega_1))$ does not imply that $u\in\Sm(\Omega_1\times\Omega_2)$. Following \cite{Tre67}, a counterexample is, for instance, $\delta(x_1-x_2)$. The additional regularity needed such that $u\in\Sm(\Omega_1\times\Omega_2)$ is therefore, by Proposition \ref{pipi:regu}, encoded in $\WFbi{12}(u)$.
\end{remark}
\noindent The bisingular wave front set admits the following properties:
\begin{pipi}[\textbf{Properties of $\WFbi{}$}]
\label{pr:wfprop}
Let $u,\ v\in\dist(\Omega_1\times\Omega_2)$, $f\in\Sm(\Omega_1\times\Omega_2)$.
\begin{itemize}
	\item $\WFbi{}$ a closed set and is conic with respect to both covariables jointly.
	\item Let $\Omega_1=\Omega_2=\Omega$. Define for $f\in\Sm(\Omega \times \Omega)$ $Au(x_1,x_2)=u(x_2,x_1)$. Then we can define the pull-back $A^*$ as an endomorphism on $\dist(\Omega \times \Omega)$  by duality and we have $(x_1,x_2,\xi_1,\xi_2)\in\WFbi{}(Au)\Leftrightarrow (x_2,x_1,\xi_2,\xi_1)\in\WFbi{}(u)$.
	\item $\WFbi{}(u+v)\subset \WFbi{}(u)\cup\WFbi{}(v)$; $\WFbi{}(fu)\subset \WFbi{}(u)$.
\end{itemize}
\end{pipi}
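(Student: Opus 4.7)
For a point $p = (x_1^0, x_2^0, \xi_1^0, \xi_2^0)$ outside $\WFbi{}(u)$, hence outside each of the three components in which it might lie, the witnessing classical order-zero pseudodifferential operator(s) supplied by Definition \ref{def:wft} have a principal symbol which is continuous and homogeneous of degree zero in its covariable. The non-characteristic locus of such a symbol is therefore open and conic, so the very same witness establishes non-membership on an open neighborhood of $p$ conic under joint rescaling $(\xi_1, \xi_2) \mapsto (t\xi_1, t\xi_2)$, $t > 0$. This yields simultaneously closedness of each $\WFbi{j}$, hence of $\WFbi{}$, together with joint conic invariance.

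\textbf{Swap symmetry.} Setting $\tau(x_1, x_2) = (x_2, x_1)$ and $Au = u \circ \tau$, a direct kernel computation yields the intertwining $(B_1 \otimes B_2) \circ A = A \circ (B_2 \otimes B_1)$ for any pseudodifferential $B_1, B_2$, together with its analogues involving $I$ in one factor. Since $A$ maps $\Sm(\Omega \times \Omega)$ to itself and swaps $\Sm(\Omega_1, \dist(\Omega_2))$ with $\Sm(\Omega_2, \dist(\Omega_1))$, each witness condition of Definition \ref{def:wft} at $(x_1, x_2, \xi_1, \xi_2)$ for $Au$ corresponds through $A$ to the matching condition at $(x_2, x_1, \xi_2, \xi_1)$ for $u$, and vice versa.

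\textbf{Additivity.} For $\WFbi{1}$, given witnesses $B_u, B_v \in \mathrm{L}^0_\mathrm{cl}(\Omega_1)$ non-characteristic at $(x_1^0, \xi_1^0)$ for $u$ and $v$ respectively, I would pick any $A \in \mathrm{L}^0_\mathrm{cl}(\Omega_1)$ non-characteristic there whose essential support lies in the intersection of the non-characteristic loci of $B_u$ and $B_v$. A microlocal parametrix construction then yields decompositions $A = P_u B_u + R_u = P_v B_v + R_v$ with $R_u, R_v \in \mathrm{L}^{-\infty}(\Omega_1)$, whence
\begin{equation*}
(A \otimes I)(u + v) = (P_u \otimes I)(B_u \otimes I) u + (R_u \otimes I) u + (P_v \otimes I)(B_v \otimes I) v + (R_v \otimes I) v,
\end{equation*}
and each summand lies in $\Sm(\Omega_1, \dist(\Omega_2))$ by Lemma \ref{lem:sminonevar}: the first and third because bisingular operators preserve this space, the second and fourth because $R_u \otimes I, R_v \otimes I \in \elbisi{-\infty}{0}$. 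The same strategy handles $\WFbi{2}$ and each of the three conditions defining $\WFbi{12}$.

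\textbf{Multiplication by a smooth function.} This is the most delicate part, and I sketch the argument for $\WFbi{1}$, the other components being analogous. Given a witness $A$ non-characteristic at $(x_1^0, \xi_1^0)$ with tight essential support near that point and $(A \otimes I) u \in \Sm(\Omega_1, \dist(\Omega_2))$, I regard $M_f$ as a bisingular operator of bi-order $(0, 0)$ and apply Theorem \ref{thm:compos}: since $f$ is independent of $\xi$, the symbol $a \# f$ of $(A \otimes I) \circ M_f$ reduces to $af + \tilde r$ with $\tilde r \in \bisi{-1}{0}$, whose $\xi_1$-essential support is contained in that of $a$. The leading term gives $\mathrm{Op}(af) u = M_f (A \otimes I) u \in \Sm(\Omega_1, \dist(\Omega_2))$. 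Using a microlocal parametrix $E$ of $A$ with $EA = I + R'$, $R' \in \mathrm{L}^{-\infty}(\Omega_1)$ smoothing on the essential support of $\tilde r$, one has
\begin{equation*}
\mathrm{Op}(\tilde r) u = \mathrm{Op}(\tilde r)(E \otimes I)(A \otimes I) u - \mathrm{Op}(\tilde r)(R' \otimes I) u,
\end{equation*}
with $\mathrm{Op}(\tilde r)(R' \otimes I) \in \elbisi{-\infty}{0}$; both terms are then in $\Sm(\Omega_1, \dist(\Omega_2))$ by Lemma \ref{lem:sminonevar}. The principal obstacle arises for $\WFbi{12}$, where joint smoothness of $(A_1 \otimes A_2)(fu)$ on $\Omega_1 \times \Omega_2$ must be recovered using all three conditions of Definition \ref{def:wft} simultaneously, by employing parametrices in both variables and exploiting the mapping properties of $\elbisi{-\infty}{0}$ and $\elbisi{0}{-\infty}$ on the one-variable smooth spaces furnished by Lemma \ref{lem:sminonevar}.
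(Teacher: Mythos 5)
Your argument for closedness works \emph{within} each of the three strata $\{\xi_2=0\}$, $\{\xi_1=0\}$, $\{|\xi_1||\xi_2|\neq 0\}$, but it does not address what happens at their common boundary, and that is exactly where the claim is delicate. If $p=(x_1^0,x_2^0,0,\xi_2^0)$ lies on the $\xi_1=0$ face, the witness you have for $p\notin\WFbi{2}(u)$ is a \emph{single} operator $A_2\in\mathrm{L}^0_\mathrm{cl}(\Omega_2)$ satisfying the \emph{one} condition \eqref{eq:wft2}; but a nearby point $(x_1,x_2,\xi_1,\xi_2)$ with $\xi_1$ small and nonzero lies in the $12$-stratum and requires a \emph{pair} $(A_1,A_2)$ satisfying the \emph{three} conditions \eqref{eq:wft31}--\eqref{eq:wft33}. ``The very same witness'' therefore does not establish non-membership on any open neighbourhood of $p$, and the sentence to that effect is the gap. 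Concretely, take $u=\delta\otimes 1$. Then $(I\otimes A_2)u=\delta\otimes A_2 1\in\Sm(\Omega_2,\diprimo(\Omega_1))$ for every $A_2$, so $\WFbi{2}(u)=\emptyset$; yet for every $(0,x_2,\xi_1,\xi_2)$ with $|\xi_1||\xi_2|\neq 0$, condition \eqref{eq:wft32} fails for every admissible $A_1$, since $(A_1\otimes I)u=A_1\delta\otimes 1\notin\Sm(\Omega_1,\diprimo(\Omega_2))$ whenever $A_1$ is non-characteristic at $(0,\xi_1)\in\WFcl(\delta)$ --- the paper itself makes this observation in the counterexample following Lemma~\ref{lem:1microell}. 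Thus $(0,x_2,\xi_1,\xi_2)\in\WFbi{12}(u)$ for all $\xi_1,\xi_2\neq 0$, these points accumulate on $(0,x_2,0,\xi_2)\notin\WFbi{}(u)$, and $\WFbi{}(u)$ fails to be closed. Any correct proof must either argue nontrivially across the strata or rest on a refined definition; the proposal does neither. (Note that the paper's concluding example claims $\WFbi{12}(\delta\otimes 1)=\emptyset$, which is incompatible with its own microellipticity counterexample; so this is a genuinely subtle point and not one to gloss over.)

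The remaining parts are in better shape. Joint conicity is immediate from the homogeneity of the principal symbols of the witnesses, as you say. The swap-symmetry intertwining $(B_1\otimes B_2)\circ A=A\circ(B_2\otimes B_1)$ and the exchange of the one-variable-smooth spaces is correct. The additivity argument by shrinking a common witness $A$ into the intersection of the non-characteristic loci of $B_u$ and $B_v$ and writing $A=P_\bullet B_\bullet+R_\bullet$ with smoothing $R_\bullet$ is the right idea; just make sure the non-characteristic loci (not merely the essential supports) of $B_u$ and $B_v$ overlap on an open conic neighbourhood of the point, which they do since both are non-characteristic there. Finally, the $\WFbi{}(fu)\subset\WFbi{}(u)$ part is essentially a re-derivation of Proposition~\ref{pr:microloc} in the special case $C=M_f\in\elbisi{0}{0}$; if you take that route you should mirror the paper's device in Lemmas~\ref{lem:1microloc}--\ref{lem:12microloc} of enlarging $A$ to an elliptic $A+A'$ and then cutting with an $H$ supported inside the vanishing set of $A'$, rather than speaking of a microlocal parametrix $E$ of the non-elliptic $A$ with $EA=I+R'$ globally smoothing --- that object does not exist, only its localized version does.
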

\begin{remark}
These properties are quite similar to the ones the $SG$-wave front set of \cite{CM03} admits. This is not very surprising, as the bisingular calculus is formally very similar in its definition to the $SG$-calculus, through which the $SG$-wave front set is introduced. We will explore this connection in Section \ref{sec:sg}.
\end{remark}
\begin{lem}\label{lem:1microloc}
Let $C\in \mathrm{L}^{m_1,m_2}(\Omega_1\times\Omega_2)$, $u\in\dist(\Omega_1\times\Omega_2)$. Then we have $\WFbi{1}(Cu)\subset\WFbi{1}(u)$.
\end{lem}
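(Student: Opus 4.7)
The plan is to produce, from the witness $A$ for $1$-regularity of $u$, a new witness for the analogous property of $Cu$. By hypothesis, there exists $A\in\mathrm{L}^0_{\mathrm{cl}}(\Omega_1)$ non-characteristic at $(x_1^0,\xi_1^0)$ with $v:=(A\otimes I)u\in\Sm(\Omega_1,\diprimo(\Omega_2))$. I will choose $\tilde A\in\mathrm{L}^0_{\mathrm{cl}}(\Omega_1)$ non-characteristic at $(x_1^0,\xi_1^0)$ whose essential support is contained in the non-characteristic conic region of $A$, and then construct a bisingular operator $Q\in\elbisi{m_1}{m_2}(\Omega_1,\Omega_2)$ such that
\begin{equation*}
(\tilde A\otimes I)\,C \;-\; Q\,(A\otimes I) \;\in\; \elbisi{-\infty}{m_2}(\Omega_1,\Omega_2).
\end{equation*}

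Granted such a factorisation, the conclusion is immediate: applying both sides to $u$ gives $(\tilde A\otimes I)(Cu)=Qv+Ru$ for some $R\in\elbisi{-\infty}{m_2}$. By Lemma \ref{lem:sminonevar} the bisingular operator $Q$ preserves the space $\Sm(\Omega_1,\diprimo(\Omega_2))$, so $Qv$ lies in it; and $R$, being in $\elbisi{-\infty}{m_2}$, maps $\diprimo(\Omega_1\times\Omega_2)$ into the same space by the same lemma. Hence $(\tilde A\otimes I)(Cu)\in\Sm(\Omega_1,\diprimo(\Omega_2))$, which is exactly the assertion $(x_1^0,x_2^0,\xi_1^0,0)\notin\WFbi{1}(Cu)$.

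The construction of $Q$ is carried out at the symbol level and exploits the structural fact that the symbols $\tilde a\otimes 1$ and $a\otimes 1$ of $\tilde A\otimes I$ and $A\otimes I$ are independent of $(x_2,\xi_2)$. Specialising Theorem \ref{thm:compos} to this case, all terms involving $\partial^{\alpha_2}_{\xi_2}$- or $D^{\alpha_2}_{x_2}$-derivatives of these symbols vanish, and one verifies that the symbols of $(\tilde A\otimes I)C$ and $Q(A\otimes I)$ reduce, modulo $\bisi{-\infty}{m_2}$, to $\tilde a\circ_2 c$ and $q\circ_2 a$ respectively, i.e.\ to the classical $\Omega_1$-pseudodifferential compositions with $(x_2,\xi_2)$ treated as parameters. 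The desired factorisation thus reduces to solving $q\circ_2 a\equiv\tilde a\circ_2 c$ modulo $\bisi{-\infty}{m_2}$. Since $a$ is elliptic on the essential support of $\tilde a$, a classical microlocal parametrix construction in $\Omega_1$ yields $e\in\mathrm{S}^0(\Omega_1)$ with $e\circ_2 a=\chi+r$, where $\chi\equiv 1$ on the essential support of $\tilde a$ and $r\in\mathrm{S}^{-\infty}(\Omega_1)$. Setting $q:=\tilde a\circ_2 c\circ_2 e$ then gives $q\circ_2 a\equiv\tilde a\circ_2 c\circ_2\chi\equiv\tilde a\circ_2 c$ modulo $\bisi{-\infty}{m_2}$, the last step because $\chi-1$ vanishes on the essential support of $\tilde a$ so that its $\circ_2$-composition with $\tilde a\circ_2 c$ is smoothing in the first variable while retaining the $m_2$-growth of the second.

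The main obstacle is the careful bookkeeping guaranteeing that every residual in the construction above actually belongs to $\bisi{-\infty}{m_2}$: bi-orders must degenerate only in the first covariable, while the full $m_2$-growth in $\xi_2$ inherited from $c$ is preserved throughout. This rests precisely on the observation that when one factor in a bisingular composition has a symbol independent of $(x_2,\xi_2)$, every correction term in the asymptotic expansion of Theorem \ref{thm:compos} involves only $\xi_1,x_1$-derivatives, leaving the $\xi_2$-order untouched. Once this bookkeeping is in place, the asymptotic summation closes in the bisingular calculus, the factorisation holds, and the argument is complete.
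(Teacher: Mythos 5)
Your proof is correct and takes essentially the same approach as the paper: both hinge on a conically localised parametrix for $A$ in the $\Omega_1$-calculus and on Lemma \ref{lem:sminonevar} to move between $\Sm(\Omega_1,\diprimo(\Omega_2))$, $\diprimo$, and bisingular operators. The paper packages this by inserting the parametrix identity $I = P(A+A')+R$ on $\Omega_1$ and estimating the three resulting summands of $(H\otimes I)Cu$, whereas you directly construct a factorisation $(\tilde A\otimes I)C \equiv Q(A\otimes I) \pmod{\elbisi{-\infty}{m_2}}$ via $q:=\tilde a\circ_2 c\circ_2 e$; these are equivalent organisations of the same argument.
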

\begin{proof}
Let $(x_1,x_2,\xi_1,0)\notin\WFbi{1}(u)$. By definition there exists a non-char (at $(x_1,\xi_1)$) $A\in\mathrm{L}_\mathrm{cl}^0(\Omega_1)$ such that $(A\otimes I)u\in\Sm(\Omega_1,\diprimo(\Omega_2)$. In particular, by Lemma \ref{lem:sminonevar} we have $\forall B\in  \mathrm{L}^{m,-\infty}(\Omega_1\times\Omega_2)$ that $B(A\otimes I)u$ is smooth. By the standard pseudodifferential calculus we can thus find an $A^\prime\in\mathrm{L}^0(\Omega_1)$ such that $A+A^\prime$ is elliptic in the sense of pseudodifferential operators and such that the symbol of $A^\prime$ vanishes on a conic neighbourhood $\Gamma$ of $(x_1,\xi_1)$. We thus have a parametrix $P\in\mathrm{L}^0(\Omega_1)$ such that $P(A+A^\prime)=I-R$ with $R\in\mathrm{L}^{-\infty}(\Omega_1)$. \\
Take $H\in\mathrm{L}^0(\Omega_1)$ such that $H$ is non-characteristic at $(x_1,\xi_1)$ and such that the symbol of $H$ vanishes outside a proper subcone of $\Gamma$. Then we have:
\begin{align*}
  (H\otimes I)Cu &= (H\otimes I)C\big((P(A+A^\prime)+R)\otimes I\big)u\\
	&= (H\otimes I)C(P\otimes I)(A\otimes I)u+(H\otimes I)C(PA^\prime\otimes I)u+\\
	&+(H\otimes I)C(R\otimes I)u\in\Sm(\Omega_1,\diprimo(\Omega_2)).
\end{align*}
\noindent The first summand is in $\Sm(\Omega_1,\diprimo(\Omega_2))$ due to the definition of $A$, the second as the symbol expansion given in Theorem \ref{thm:compos}, using the support properties of the symbols of $H$ and $A^\prime$, gives an operator in $L^{-\infty,0}$. The third one is in $\Sm(\Omega_1,\diprimo(\Omega_2))$ as $R\in\mathrm{L}^{-\infty,0}$ is already a smoothing operator in the first variable. This proves the claim.
\end{proof}
\begin{lem}\label{lem:12microloc}
Let $C\in \mathrm{L}^{m_1,m_2}(\Omega_1\times\Omega_2)$, $u\in\dist(\Omega_1\times\Omega_2)$. Then we have $\WFbi{12}(Cu)\subset\WFbi{12}(u)$.
\begin{proof}
Let $(x_1,x_2,\xi_1,\xi_2)\notin\WFbi{12}(u)$. Then, by definition, we know there exist $A_i \in \mathrm{L}^0_\mathrm{cl}(\Omega_i)$, non-char at $(x_i,\xi_i)$, $i=1,2$, such that 
\begin{align*} 
(A_1 \otimes A_2) u&\in\Sm(\Omega_1\times\Omega_2),\\ 
(A_1 \otimes I) u&\in\Sm(\Omega_1,\diprimo(\Omega_2)), \\
(I \otimes A_2) u&\in\Sm(\Omega_2,\diprimo(\Omega_1)).
\end{align*}
By the standard pseudodifferential calculus we can thus find $A_i^\prime\in\mathrm{L}^0(\Omega_i)$ such that $A_i+A_i^\prime$ is elliptic in the sense of pseudodifferential operators and such that the symbol of $A_i^\prime$ vanishes on a conic neighborhood $\Gamma_i$ of $(x_i,\xi_i)$. We then have two parametrices $P_i \in\mathrm{L}^0(\Omega_i)$ such that
\begin{equation*}
(P_1 \otimes P_2) \big((A_1 + A_1') \otimes (A_2 + A_2')\big) = I\otimes I - R_1 \otimes I - I \otimes R_2 - \underbrace{R_1 \otimes R_2}_{:=R},
\end{equation*}
with $R_i\in\mathrm{L}^{-\infty}(\Omega_i)$. Now pick $H_i\in\mathrm{L}^0(\Omega_i)$ such that $H_i$ is non-characteristic at $(x_i,\xi_i)$ and such that the symbol of $H_i$ vanishes outside a proper subcone of $\Gamma_i$. Then, recalling that by the standard pseudodifferential calculus if two operators have disjoint support their product is a smoothing operator, and using Lemma \ref{lem:sminonevar},
\begin{align*}
&(H_1 \otimes H_2)C u =  \\
&(H_1 \otimes H_2)C \bigg((P_1 \otimes P_2) \big((A_1 + A_1') \otimes (A_2 + A_2')\big) + R_1 \otimes I + I \otimes R_2 + R\bigg) u\\
&= \underbrace{(H_1 \otimes H_2)C (P_1 \otimes P_2) (A_1 \otimes A_2)u}_{\in \Sm \text{by eq \eqref{eq:wft31}}} + \underbrace{(H_1 \otimes H_2)C (P_1 \otimes P_2) (A_1' \otimes A_2)u}_{\in \Sm \text{by \eqref{eq:wft33} and by the support of}\, H_1, A'_1} \\
&+ \underbrace{(H_1 \otimes H_2)C (P_1 \otimes P_2) (A_1 \otimes A'_2)u}_{\in \Sm \text{by \eqref{eq:wft32} and by the support of}\, H_2, A'_2} + \underbrace{(H_1 \otimes H_2)C (P_1 \otimes P_2) (A_1' \otimes A'_2)u}_{\in \Sm \text{by the support of}\, H_1,H_2, A'_1,A_2'} \\
&+ (H_1 \otimes H_2)C (R_1 \otimes I)u + (H_1 \otimes H_2)C (I \otimes R_2) u + \underbrace{(H_1 \otimes H_2)C R u}_{\in \Sm \text{because}\, R\in\mathrm{L}^{-\infty,-\infty}}\\
&= (H_1 \otimes H_2)C (R_1 \otimes I)u + (H_1 \otimes H_2)C (I \otimes R_2) u \quad {\, \text{mod}\, \Sm.}
\end{align*}
Now, without loss of generality, we proceed with the calculations only for the term $(H_1 \otimes H_2)C (R_1 \otimes I)u$. We have
\begin{align*}
(H_1 \otimes H_2)C (R_1 \otimes I)u&= (H_1 \otimes H_2)C (R_1 \otimes (P_2(A_2 + A_2') + R_2))u \\
&= \underbrace{(H_1 \otimes H_2)C (R_1 \otimes P_2)(I \otimes A_2) u}_{\in \Sm \text{by \eqref{eq:wft33}}} \\
&+ \underbrace{(H_1 \otimes H_2)C (R_1 \otimes P_2)(I \otimes A_2')u}_{\in \Sm \text{by the support of}\, H_2, A'_2} \\
&+ \underbrace{(H_1 \otimes H_2)C (R_1 \otimes I)(I \otimes R_2) u}_{\in \Sm \text{because}\, R\in\mathrm{L}^{-\infty,-\infty}} \in \Sm, 
\end{align*}
therefore $(H_1 \otimes H_2)C u \in \Sm$. With similar computations, one can check that \
	\begin{align*} 
	(H_1 \otimes I) Cu&\in\Sm(\Omega_1,\diprimo(\Omega_2))\\ 
	(I \otimes H_2) Cu&\in\Sm(\Omega_2,\diprimo(\Omega_1))
  \end{align*} 
and this proves the claim.
\end{proof}
\end{lem}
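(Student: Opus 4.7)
The strategy is to mirror the approach of Lemma \ref{lem:1microloc}, but the parametrix construction has to be doubled: for a point $(x_1,x_2,\xi_1,\xi_2)\notin\WFbi{12}(u)$ with $|\xi_1||\xi_2|\neq 0$, I must produce operators $H_i\in\mathrm{L}^0_\mathrm{cl}(\Omega_i)$ non-characteristic at $(x_i,\xi_i)$ such that \emph{all three} conditions \eqref{eq:wft31}--\eqref{eq:wft33} hold for $Cu$. By hypothesis, a pair $(A_1,A_2)$ of such operators already exists for $u$, and the plan is to insert a parametrix identity in each variable in order to transfer the regularity from $u$ to $Cu$.

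First, by the standard pseudodifferential calculus, extend each $A_i$ by some $A'_i\in\mathrm{L}^0(\Omega_i)$ whose symbol vanishes on a conic neighbourhood $\Gamma_i$ of $(x_i,\xi_i)$, so that $A_i+A'_i$ is elliptic with a parametrix $P_i$ satisfying $P_i(A_i+A'_i)=I-R_i$, $R_i\in\mathrm{L}^{-\infty}(\Omega_i)$. Choose $H_i$ non-characteristic at $(x_i,\xi_i)$ with symbol supported in a proper subcone of $\Gamma_i$, so that each $H_i\circ A'_i$ is smoothing. Tensoring the parametrix identities and distributing yields
\begin{equation*}
(H_1\otimes H_2) Cu = (H_1\otimes H_2)\,C\,\bigl[(P_1\otimes P_2)\bigl((A_1+A'_1)\otimes(A_2+A'_2)\bigr)+R_1\otimes I+I\otimes R_2-R_1\otimes R_2\bigr]u.
\end{equation*}
The four ``main'' summands $(H_1\otimes H_2) C (P_1\otimes P_2)(\tilde A_1\otimes \tilde A_2)u$ with $\tilde A_i\in\{A_i,A'_i\}$ are handled as follows: the pure term $A_1\otimes A_2$ is smooth by \eqref{eq:wft31}; the mixed terms $A'_1\otimes A_2$ and $A_1\otimes A'_2$ are smooth by combining \eqref{eq:wft33}, respectively \eqref{eq:wft32}, with the fact that disjoint symbol supports make $H_i\circ A'_i$ smoothing in the $i$-th factor, so that Lemma \ref{lem:sminonevar} promotes one-variable regularity to joint smoothness; the doubly-primed term $A'_1\otimes A'_2$ is smooth purely on support grounds.

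The correction terms $(H_1\otimes H_2)C(R_1\otimes I)u$ and $(H_1\otimes H_2)C(I\otimes R_2)u$ require a further iteration of the parametrix in the untouched variable. After one more expansion, the resulting pieces again split into the three categories above, together with a fully smoothing $R_1\otimes R_2$-term in $\elbisi{-\infty}{-\infty}$ which is harmless. The two remaining defining properties of $\WFbi{12}(Cu)$, namely $(H_1\otimes I)Cu\in\Sm(\Omega_1,\diprimo(\Omega_2))$ and $(I\otimes H_2)Cu\in\Sm(\Omega_2,\diprimo(\Omega_1))$, are treated by the same scheme but with only one of the two parametrix insertions and using only the corresponding one-variable condition among \eqref{eq:wft32}, \eqref{eq:wft33}.

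The main obstacle is exactly the phenomenon stressed in Corollary \ref{coro:comm} and Example \ref{ex:comm}: an operator smoothing in a single tensor factor is \emph{not} smoothing in the bisingular sense, so the familiar pseudodifferential ``commute modulo smoothing'' trick fails. The accounting therefore has to be done by hand, keeping track of which of \eqref{eq:wft31}--\eqref{eq:wft33} is invoked on each cross-term, and checking carefully that the disjoint-support gain combined with Lemma \ref{lem:sminonevar} produces genuine joint smoothness rather than mere one-variable regularity; it is precisely this book-keeping, not any single deep estimate, that constitutes the substance of the proof.
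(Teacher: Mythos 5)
Your proposal is correct and takes essentially the same approach as the paper's proof: you insert the same double parametrix identity built from $A_i+A_i'$ with parametrix $P_i$ and remainder $R_i$, choose $H_i$ with support in a proper subcone of $\Gamma_i$, and carry out the same term-by-term accounting using conditions \eqref{eq:wft31}--\eqref{eq:wft33}, the disjoint-support smoothing of $H_i\circ A'_i$, Lemma \ref{lem:sminonevar}, and a second parametrix iteration to absorb the residual $R_i$-terms.
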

\noindent The preceding Lemmas lead to the following proposition, which asserts that this definition of wave front set is indeed suitable for the calculus of bisingular operators:
\begin{pipi}[\textbf{Microlocality of bisingular operators}]\label{pr:microloc}
Let $C\in \mathrm{L}^{m_1,m_2}(\Omega_1\times\Omega_2)$, $u\in\dist(\Omega_1\times\Omega_2)$. Then we have $\WFbi{}(Cu)\subset\WFbi{}(u)$.
\end{pipi}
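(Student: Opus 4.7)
The plan is to exploit the decomposition $\WFbi{}(u)=\WFbi{1}(u)\cup\WFbi{2}(u)\cup\WFbi{12}(u)$ and prove the inclusion separately on each of the three pieces. Since each component lives on a distinct locus in the covariable space (the axis $\xi_2=0$, the axis $\xi_1=0$, and the open set $|\xi_1||\xi_2|\neq 0$ respectively), this splitting is compatible with the union and the result will then follow by simply taking the union of the three partial inclusions.

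The first inclusion $\WFbi{1}(Cu)\subset\WFbi{1}(u)$ has already been proved in Lemma \ref{lem:1microloc}, and the third inclusion $\WFbi{12}(Cu)\subset\WFbi{12}(u)$ has already been proved in Lemma \ref{lem:12microloc}. Hence what is formally missing is only the inclusion $\WFbi{2}(Cu)\subset\WFbi{2}(u)$. However, Definition \ref{def:wft} is entirely symmetric in its treatment of the two factors $\Omega_1$ and $\Omega_2$: interchanging $(x_1,\xi_1)$ with $(x_2,\xi_2)$, swapping the tensor factors $A\otimes I \leftrightarrow I\otimes A$, and swapping $\Sm(\Omega_1,\diprimo(\Omega_2))$ with $\Sm(\Omega_2,\diprimo(\Omega_1))$, the conditions defining $\WFbi{2}$ are obtained from those defining $\WFbi{1}$. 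The argument of Lemma \ref{lem:1microloc}, which relies only on the symbol expansion of Theorem \ref{thm:compos}, on the standard pseudodifferential parametrix construction in a single factor, and on Lemma \ref{lem:sminonevar} (which is itself symmetric in the two variables), can therefore be transcribed verbatim with the roles of the two variables exchanged to yield $\WFbi{2}(Cu)\subset\WFbi{2}(u)$.

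The substantive content of the proof has in fact already been absorbed into the two preceding lemmas; the present proposition is essentially their concatenation. I do not foresee any real obstacle at this stage: the main technical difficulty, namely that one cannot use the usual commutator tricks because of the weak behaviour of bisingular commutators (see Corollary \ref{coro:comm} and Remark \ref{ex:comm}), was already confronted and bypassed in the proof of Lemma \ref{lem:12microloc}, where the trick was to use the standard pseudodifferential parametrix factor-by-factor, so that the relevant ``remainders'' reduce either to smoothing operators in a single slot (hence controlled by Lemma \ref{lem:sminonevar}) or to operators with disjoint-support symbol factors in one variable, which are again smoothing in that variable by the standard calculus.
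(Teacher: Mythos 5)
Your proposal is exactly what the paper does: the proposition is presented as an immediate consequence of Lemmas \ref{lem:1microloc} and \ref{lem:12microloc}, with the $\WFbi{2}$-inclusion obtained from Lemma \ref{lem:1microloc} by the evident symmetry in the two factors. Your observation that the substantive work (bypassing the failure of the usual commutator trick via factor-wise parametrices and Lemma \ref{lem:sminonevar}) is already contained in those lemmas matches the paper's reasoning.
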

%

\subsection{Microelliptic properties of bisingular operators}


From the previous proposition, we can conclude that bielliptic operators preserve the bi-wave front set:
\begin{coro}
\label{coro:ell}
Let $A 	\in \mathrm{L}^{m_1,m_2}(\Omega_1,\Omega_2)$ be bi-elliptic. Then $\WFbi{}(Au) = \WFbi{}(u)$.
\begin{proof}
One inclusion follows directly form Proposition \ref{pr:microloc}. The other follows proceeding like in Proposition \ref{pipi:equal}.
\end{proof}
\end{coro}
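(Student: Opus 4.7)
The plan is to prove the two inclusions $\WFbi{}(Au)\subseteq\WFbi{}(u)$ and $\WFbi{}(u)\subseteq\WFbi{}(Au)$ separately. The first inclusion is immediate from Proposition \ref{pr:microloc}, which establishes microlocality of every bisingular operator with respect to $\WFbi{}$, applied to $C=A$. So the content of the corollary lies in the reverse inclusion, for which we exploit bi-ellipticity of $A$.

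For the reverse inclusion, the idea is to invoke the parametrix: since $A\in\mathrm{L}^{m_1,m_2}_{\mathrm{cl}}(\Omega_1,\Omega_2)$ is bi-elliptic, there exists $B\in\mathrm{L}^{-m_1,-m_2}_{\mathrm{cl}}(\Omega_1,\Omega_2)$ with $BA=I+K$ for some smoothing operator $K\in\mathrm{L}^{-\infty,-\infty}(\Omega_1,\Omega_2)$. Writing $u=BAu-Ku$, the additivity property from Proposition \ref{pr:wfprop} gives
\begin{equation*}
\WFbi{}(u)=\WFbi{}(BAu-Ku)\subseteq\WFbi{}(BAu)\cup\WFbi{}(Ku).
\end{equation*}
Since $K$ is smoothing, $Ku\in\Sm(\Omega_1\times\Omega_2)$, and Proposition \ref{pipi:regu} then yields $\WFbi{}(Ku)=\emptyset$. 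Applying microlocality (Proposition \ref{pr:microloc}) to the bisingular operator $B$ gives $\WFbi{}(BAu)\subseteq\WFbi{}(Au)$, which closes the argument.

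The proof is essentially a direct transcription of the argument used in Proposition \ref{pipi:equal} to the new wave front set $\WFbi{}$, with the classical version $\widetilde{\WFcl}$ replaced at each step by the bisingular one. All the ingredients required (existence of the parametrix, additivity of $\WFbi{}$ under sums, smoothness characterization $\WFbi{}(\cdot)=\emptyset\Leftrightarrow\Sm$, and microlocality of bisingular operators) have already been established earlier, so no genuine obstacle arises. The only point to keep in mind is that one must apply microlocality to $B$ rather than $A$ in order to pass from $\WFbi{}(BAu)$ to $\WFbi{}(Au)$; this is legitimate because $B$ is itself a (classical) bisingular operator.
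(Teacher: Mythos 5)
Your proof is correct and follows exactly the route the paper has in mind: the forward inclusion is Proposition \ref{pr:microloc} applied to $A$, and the reverse inclusion transcribes the parametrix argument of Proposition \ref{pipi:equal} to $\WFbi{}$, using additivity from Proposition \ref{pr:wfprop}, the global regularity characterization of Proposition \ref{pipi:regu} to dispose of the smoothing remainder, and microlocality applied to the parametrix $B$. You have simply written out the details that the paper's "proceeding like in Proposition \ref{pipi:equal}" leaves implicit.
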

\noindent Next we study the microellipticity properties of bisingular operators. For that we need a suitable definition of a characteristic set. As in Definition \ref{def:biell}, $\Omega_1$ and $\Omega_2$ are considered as compact manifolds.
\begin{deff}
Let $B \in \elbisi{m_1}{m_2}(\Omega_1,\Omega_2)$ and $v_0 = (x_1,\xi_1) \in \Omega_1 \times (\RR^{n_1} \setminus 0)$. We say that $B$ is not $1$-characteristic at $V := \pi_2^{-1} v_0 := \{(x_1,y,\xi_1,0) : y \in \Omega_2\}$ if 
\begin{enumerate}
\item for all $v \in V $ there exists an open conic neighbourhood $\Theta$ of $v$ such that $\sigma^{12} \neq 0$ on $\Theta \setminus (\RR^+ v)$,
\item $\sigma^1(B) \in \mathrm{L}^{m_2}_{\mathrm{cl}}(\Omega_2)$ is invertible with inverse in $\mathrm{L}^{-m_2}_{\mathrm{cl}}(\Omega_2)$ in an open conic neighbourhood $\Gamma$ of $v_0$.
\end{enumerate}
Let $\Charbi{1}(B)$ be the set of all $V$ such that $B$ is $1$-characteristic at $V$. \newline
\noindent We define $\Charbi{2}(B)$ accordingly for $W := \pi_2^{-1} w_0$, $w_0 \in \Omega_2 \times (\RR^{n_2} \setminus 0)$, by exchanging the roles of $\sigma^1(B)$ and $\sigma^2(B)$. \newline
\noindent Finally, we define $\Charbi{12}(B)$ as the set of points $z=(x_1,x_2,\xi_1,\xi_2)$, $|\xi_1||\xi_2| \neq 0$, where $\sigma^{12}(z) = 0$. \newline
\noindent Set
\begin{equation*}
\Charbi{}(B) := \Charbi{1}(B) \cup \Charbi{2}(B) \cup \Charbi{12}(B).
\end{equation*}  
\end{deff}
\begin{remark}
$B$ is bi-elliptic iff $\Charbi{}(B) = \emptyset$.
\end{remark}
\begin{remark}
\label{rem:wfchar}
With this notion Definition \ref{def:wft} can also be expressed in the form 
\begin{align*}
	\WFbi{1}(u)&=\bigcap_{\substack {A\in \mathrm{L}^0_\mathrm{cl}(\Omega_1) \\ (A\otimes I) u\in\Sm(\Omega_1,\diprimo(\Omega_2))}} \underbrace{\Charbi{1}(A\otimes I),}_{\Charcl(A)\times (\Omega_2\times(\RR^{n_2}\setminus 0))}
	\\
		\WFbi{2}(u)&=\bigcap_{\substack {A\in \mathrm{L}^0_\mathrm{cl}(\Omega_2) \\ (I\otimes A) u\in\Sm(\Omega_2,\diprimo(\Omega_1))}} \underbrace{\Charbi{2}(I\otimes A),}_{(\Omega_1\times(\RR^{n_1}\setminus 0)) \times \Charcl(A)}
	\\
	\WFbi{12}(u)&=\bigcap_{\substack {A_i\in \mathrm{L}^0_\mathrm{cl}(\Omega_1) \\ (A_1 \otimes A_2) u\in\Sm \\ (A_1\otimes I) u\in\Sm(\Omega_1,\diprimo(\Omega_2)) \\(I\otimes A_2) u\in\Sm(\Omega_2,\diprimo(\Omega_1))}} \Charbi{12}(A_1 \otimes A_2).
\end{align*}
\end{remark}
\noindent With the definition of $\Charbi{}$ we can review in the following Table \ref{tab:bisichar} the model cases of Table \ref{tab:exbisi}, setting $\RR_0^{n}=\RR^{n}\setminus 0$, $\Omega=\Omega_1\times\Omega_2$ and $\RR_0^{n_{12}}=  \RR_0^{n_1}\times\RR_0^{n_2}$.
\begin{table}[h]
\renewcommand{\arraystretch}{1.2}
\begin{center}
\begin{tabular}{|c|c|c|c|}
\hline 
Operator & $\Charbi{1}$ & $\Charbi{2}$ & $\Charbi{12}$  \\
\hline
$I\otimes I$ & $\emptyset$ & $\emptyset$ & $\emptyset$ \\ 
\hline 
$-\Delta_1 \otimes I+I\otimes (-\Delta_2)$ & $\Omega\times\RR_0^{n_1}\times\{0\}$ & $\Omega\times\{0\}\times\RR_0^{n_2}$ & $\Omega\times \RR_0^{n_{12}}$ \\ 
\hline 
$-\Delta_1 \otimes (-\Delta_2) $ & $\Omega\times\RR_0^{n_1}\times\{0\}$ & $\Omega\times\{0\}\times\RR_0^{n_2}$ & $\emptyset$ \\ 
\hline 
$-\Delta_1 \otimes (-\Delta_2+I) $  & $\Omega\times\RR_0^{n_1}\times\{0\}$  & $\emptyset$ & $\emptyset$ \\  
\hline 
$(-\Delta_1+I) \otimes (-\Delta_2+I) $  & $\emptyset$ & $\emptyset$& $\emptyset$ \\  
\hline 
$(-\Delta_1+I)^{-1} \otimes (-\Delta_2+I)^{-1} $ & $\emptyset$ & $\emptyset$ & $\emptyset$ \\ 
\hline 
\end{tabular} 
\end{center}
\label{tab:bisichar}
\caption{$\Charbi{}$ for model cases of bisingular operators}
\end{table}
\begin{lem}\label{lem:locpar}
Let $C 	\in \mathrm{L}^{m_1,m_2}(\Omega_1,\Omega_2)$ be such that
\begin{equation*}
\Charbi{1}(C) \cap (\Gamma \times \Omega_2 \times \{0\}) = \emptyset.
\end{equation*}
Let $a \in \mathrm{S}^{0}(\Omega_1)$ have support in a closed cone $\Gamma$ and be non-char (in the sense of $\Psi$DO) in $\Gamma^0$. Then there exists a $H \in \mathrm{L}^{-m_1,-m_2}(\Omega_1,\Omega_2)$ such that
\begin{equation*}
HC = A \otimes I - R,
\end{equation*}
where $R \in \elbisi{-\infty}{0}$.
\begin{proof}
The requirements on the support of $a$ mean precisely that $C$ is elliptic with respect to $a$ in the sense of \cite{Cor95}, Theorem 2.3.3. Therefore, by the classical calculus of pseudodifferential operators, we can find a symbol $e\in \bisi{-m_1}{-m_2}$ such that for all fixed $(x_2,\xi_2)$ the operator $E(x_2,\xi_2)=e(x_1,x_2,D_1,\xi_2)$ is a local parametrix with respect to $a$ namely,
\begin{equation*}
E(x_2,\xi_2)\, \sigma^2(C)(x_2,\xi_2) = R(x_2,\xi_2) + (A \otimes 1) (x_2,\xi_2),
\end{equation*}
where $R(x_2,D_2) \in \mathrm{L}^{-\infty,0}(\Omega_1,\Omega_2)$ and $E(x_2,D_2) \in \mathrm{L}^{-m_1,-m_2}(\Omega_1,\Omega_2)$. \newline
\noindent Now define $H$ as the operator with principal symbol
\begin{equation*}
\begin{split}
h = \psi_1(x_1,\xi_1) a_{m_1; \cdot} e_{m_1 ; \cdot} &+ \psi_2(x_2,\xi_2) a_{\cdot ; m_2} c^{-1}_{\cdot ; m_2} \\
& - \psi_1(x_1,\xi_1) \psi_2(x_2,\xi_2) a_{m_1; m_2} c^{-1}_{m_1 ; m_2}.
\end{split}
\end{equation*}
Using the calculus and Theorem \ref{thm:compos}, it is straightforward that $H$ matches the requirements.
\end{proof}
\end{lem}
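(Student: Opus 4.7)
\emph{Strategy.} My plan is to produce $H$ by prescribing its principal symbol as a three-term inclusion--exclusion combination, and then invoke the standard fact that any compatible triple of bi-homogeneous principal-symbol data is realized by a classical bisingular operator. The identity $HC - A\otimes I \in \elbisi{-\infty}{0}$ will then be checked at the level of principal symbols using the composition asymptotics of Theorem~\ref{thm:compos}.

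\emph{Key steps.} The first step is to extract invertibility information from the hypothesis. The assumption $\Charbi{1}(C) \cap (\Gamma \times \Omega_2 \times \{0\}) = \emptyset$ yields invertibility of $\sigma^{1}(C)(x_1,\xi_1)$ in $\mathrm{L}^{m_2}_{\mathrm{cl}}(\Omega_2)$ for $(x_1,\xi_1)\in\Gamma$, and in particular non-vanishing of $\sigma^{12}(C)$ on $\Gamma \times (T^*\Omega_2 \setminus 0)$. Viewed as a $\Psi$DO on $\Omega_1$ of order $m_1$, the operator $\sigma^{2}(C)(x_2,\xi_2)$ has principal symbol $\sigma^{12}(C)$ in $\xi_1$, hence is elliptic with respect to $a$ in Cordes' sense. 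Applying Cordes' Theorem~2.3.3 fibrewise in $(x_2,\xi_2)$ and tracking smoothness in the parameters then produces a symbol $e \in \bisi{-m_1}{-m_2}(\Omega_1,\Omega_2)$ satisfying $E(x_2,\xi_2)\,\sigma^{2}(C)(x_2,\xi_2) = A + R(x_2,\xi_2)$ with $R \in \elbisi{-\infty}{0}$. The second step is to prescribe the principal symbol of $H$ by
\begin{equation*}
h = \psi_1\, a_{m_1 ; \cdot}\, e_{m_1 ; \cdot} + \psi_2\, a_{\cdot ; m_2}\, c^{-1}_{\cdot ; m_2} - \psi_1\psi_2\, a_{m_1 ; m_2}\, c^{-1}_{m_1 ; m_2},
\end{equation*}
where the $c^{-1}$ symbols come from inverting the principal components of $C$ on $\Gamma$. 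The inclusion--exclusion is engineered so that the three bi-homogeneous components share $a_{m_1 ; m_2}\, c^{-1}_{m_1 ; m_2}$ as their common leading term, meeting exactly the compatibility clause of the bi-homogeneous principal symbol definition and so producing a legitimate classical bisingular operator $H$. Applying Theorem~\ref{thm:compos} termwise, the leading $\sigma^{12}$, $\sigma^{1}$ and $\sigma^{2}$ components of $HC$ match those of $A \otimes I$, delivering $HC - (A \otimes I) \in \elbisi{-\infty}{0}$.

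\emph{Main obstacle.} The delicate point is order-bookkeeping. Since bisingular composition decreases order along three independent axes (the pieces $c^1,\ c^2,\ c^{12}$ of Theorem~\ref{thm:compos}), the inclusion--exclusion in $h$ is essential for cancelling spurious contributions that would otherwise keep the remainder at finite order in $\xi_1$. Verifying that the remainder genuinely lies in $\elbisi{-\infty}{0}$---and not in a larger class---requires carefully tracking the three expansion types and exploiting that Cordes' parametrix supplies arbitrary smoothing in the $\xi_1$-direction, consistent with the fact that no invertibility of $\sigma^{2}(C)$ outside $\Gamma$ is assumed.
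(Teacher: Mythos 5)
Your proposal follows essentially the same route as the paper's proof: invoke Cordes' Theorem~2.3.3 fibrewise in $(x_2,\xi_2)$ to obtain a local parametrix symbol $e \in \bisi{-m_1}{-m_2}$ for $\sigma^{2}(C)$ with respect to $a$, then define $H$ via the identical three-term inclusion--exclusion formula for its principal symbol and verify the remainder using Theorem~\ref{thm:compos}. The only difference is that you spell out, a bit more explicitly than the paper does, how the hypothesis $\Charbi{1}(C)\cap(\Gamma\times\Omega_2\times\{0\})=\emptyset$ yields the requisite invertibility of $\sigma^{1}(C)$ and non-vanishing of $\sigma^{12}(C)$ on $\Gamma$; this is a faithful unpacking of the paper's terse first sentence and not a divergent argument.
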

\begin{lem}\label{lem:1microell}
Let $C\in \mathrm{L}^{m_1,m_2}(\Omega_1\times\Omega_2)$, $u\in\dist(\Omega_1\times\Omega_2)$. Then we have
\begin{equation*}
\WFbi{1}(u) \subseteq \Charbi{1}(C) \cup \WFbi{1}( C u).
\end{equation*}
\begin{proof}
Let $(x_1,x_2,\xi_1,0) \notin \Charbi{1}(C) \cup \WFbi{1}(C u)$. Then there exists $A \in \mathrm{L}^{0}(\Omega_1)$ such that we have $(A \otimes I) H C u \in \Sm(\Omega_1,\diprimo(\Omega_2))$, with $H$ as in Lemma \ref{lem:locpar}, due to microlocality (Proposition \ref{pr:microloc}) of $H$. Then we have
\begin{equation*}
\begin{split}
(A^2 \otimes I) u & = (A \otimes I) (R - HC) u \\
&= \underbrace{(A \otimes I) R}_{\in\, \mathrm{L}^{-\infty,0}} u - \underbrace{ (A \otimes I) HC u}_{\in\,\Sm(\Omega_1,\diprimo(\Omega_2))\, \text{by assumption on $A$}} \in \Sm.
\end{split}
\end{equation*}
Using Lemma \ref{lem:sminonevar} on the first summand, this proves the claim.
\end{proof}
\end{lem}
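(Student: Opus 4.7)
The plan is to mimic the classical microellipticity argument for pseudodifferential operators, using the two tools prepared earlier: the local parametrix of Lemma \ref{lem:locpar}, and the general microlocality statement Proposition \ref{pr:microloc}. Fix a point $p = (x_1,x_2,\xi_1,0) \notin \Charbi{1}(C) \cup \WFbi{1}(Cu)$. The goal is to produce an operator $Q \in \mathrm{L}^0_\mathrm{cl}(\Omega_1)$ non-characteristic at $(x_1,\xi_1)$ such that $(Q \otimes I) u \in \Sm(\Omega_1, \diprimo(\Omega_2))$, which by Definition \ref{def:wft} will show that $p \notin \WFbi{1}(u)$.

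First I would invoke Lemma \ref{lem:locpar}. Since $p\notin\Charbi{1}(C)$, it furnishes a symbol $a \in \mathrm{S}^0(\Omega_1)$ non-characteristic at $(x_1,\xi_1)$ and a bisingular operator $H \in \mathrm{L}^{-m_1,-m_2}(\Omega_1,\Omega_2)$ such that, writing $A := \mathrm{Op}(a)$,
\begin{equation*}
HC \; = \; A \otimes I \; - \; R, \qquad R \in \mathrm{L}^{-\infty,0}(\Omega_1,\Omega_2).
\end{equation*}
Rearranging yields $(A \otimes I) u = HCu + Ru$, so the question reduces to understanding $HCu$ and $Ru$ microlocally in the first variable.

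Next, because $H$ is a bisingular operator, Proposition \ref{pr:microloc} applied to $H$ gives $\WFbi{1}(HCu) \subset \WFbi{1}(Cu)$, so in particular $p \notin \WFbi{1}(HCu)$. By Definition \ref{def:wft} there exists $B \in \mathrm{L}^0_\mathrm{cl}(\Omega_1)$ non-characteristic at $(x_1,\xi_1)$ with $(B \otimes I) HCu \in \Sm(\Omega_1, \diprimo(\Omega_2))$. Composing with the parametrix identity yields
\begin{equation*}
(BA \otimes I) u \; = \; (B \otimes I) HCu \; + \; (B \otimes I) Ru.
\end{equation*}
The first summand is smooth in the required sense by the choice of $B$; for the second, $(B \otimes I) R \in \mathrm{L}^{-\infty,0}(\Omega_1,\Omega_2)$ by the composition Theorem \ref{thm:compos}, and then Lemma \ref{lem:sminonevar} gives $(B \otimes I) Ru \in \Sm(\Omega_1, \diprimo(\Omega_2))$. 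Setting $Q := BA \in \mathrm{L}^0_\mathrm{cl}(\Omega_1)$, which is non-characteristic at $(x_1,\xi_1)$ because both factors are, completes the proof.

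The only delicate point is the appeal to Proposition \ref{pr:microloc}: it is precisely what allows us to transfer the hypothesis $p \notin \WFbi{1}(Cu)$ through the bisingular operator $H$, an operation that would not be available in a naive calculus because of the peculiar commutator behaviour recorded in Corollary \ref{coro:comm} and Example \ref{ex:comm}. Once that is in place, the remainder is a straightforward combination of the parametrix identity with the one-variable smoothing property of Lemma \ref{lem:sminonevar}.
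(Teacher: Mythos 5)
Your proof is correct and follows essentially the same route as the paper's: it is built on the local parametrix Lemma \ref{lem:locpar}, the microlocality of the bisingular operator $H$ (Lemma \ref{lem:1microloc} / Proposition \ref{pr:microloc}), and the one-variable smoothing property of Lemma \ref{lem:sminonevar}. The only, and slight, difference is that you keep the two roles played by the zero-order pseudodifferential operator separate: one $A$ coming from Lemma \ref{lem:locpar} (with $HC = A \otimes I - R$), and a second $B$ coming from $(x_1,x_2,\xi_1,0)\notin\WFbi{1}(HCu)$, finally composing them to $Q = BA$. The paper compresses both into a single $A$, obtaining $(A^2\otimes I)u \in \Sm(\Omega_1,\diprimo(\Omega_2))$; this tacitly requires arranging that the same $A$ can serve both as the test operator witnessing $(A\otimes I)HCu\in\Sm(\Omega_1,\diprimo(\Omega_2))$ and as the operator appearing in the parametrix identity of Lemma \ref{lem:locpar}, which is a standard shrinking-of-supports step left implicit there. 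Your version makes this step unnecessary and is therefore a bit cleaner, but the underlying argument is the same.
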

\noindent Using the previous Lemma, we conclude that
\begin{pipi}[\textbf{Microellipticity of bisingular operators with respect to the $1$ and $2$ components of $\WFbi{}$}]
Let $C \in \mathrm{L}^{m_1,m_2}(\Omega_1,\Omega_2)$, $u\in\dist(\Omega_1\times\Omega_2)$. Then
\begin{equation*}
\WFbi{i}(u) \subseteq \Charbi{i}(C) \cup \WFbi{i}( C u),
\end{equation*}
$i=1,2$.
\end{pipi}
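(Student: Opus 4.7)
The plan is straightforward: the case $i=1$ is essentially already established as Lemma \ref{lem:1microell}, so the only thing remaining is to package it as a statement about both components by a symmetry argument. I would begin by observing that the $i=1$ case of the proposition is literally the statement of Lemma \ref{lem:1microell}, so that case needs no further argument.

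For $i=2$, my plan is to reduce to the $i=1$ case by a symmetry of the calculus. The definitions of $\WFbi{2}$ and $\Charbi{2}$ are completely symmetric to those of $\WFbi{1}$ and $\Charbi{1}$ under the interchange of the two factors $(\Omega_1,x_1,\xi_1)\leftrightarrow (\Omega_2,x_2,\xi_2)$. Concretely, the flip operator $Au(x_1,x_2)=u(x_2,x_1)$ considered in Proposition \ref{pr:wfprop} intertwines the two components: $(x_1,x_2,0,\xi_2)\in\WFbi{2}(u)$ if and only if $(x_2,x_1,\xi_2,0)\in\WFbi{1}(A^*u)$, and analogously for $\Charbi{}$. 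Applying Lemma \ref{lem:1microell} to the flipped operator $A^* C A^{*-1}$ then yields the $i=2$ statement; alternatively, and perhaps more cleanly, I would just repeat the proof of Lemma \ref{lem:1microell} verbatim with the indices $1$ and $2$ exchanged (which requires the analogue of the local parametrix construction of Lemma \ref{lem:locpar} with the roles of the two variables swapped, i.e.\ producing an $H$ with $CH=I\otimes A -R'$ where $R'\in\elbisi{0}{-\infty}$).

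The routine verification that Lemma \ref{lem:locpar} has a symmetric counterpart is immediate from the symmetric role played by $\sigma^1$ and $\sigma^2$ in the calculus (in particular, Theorem \ref{thm:compos} is symmetric in the two factors, and the definition of $\Charbi{2}$ is obtained from $\Charbi{1}$ by exchanging the roles of $\sigma^1(B)$ and $\sigma^2(B)$, as explicitly stated). Thus no essentially new computation is needed.

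I do not anticipate a main obstacle: everything is either a citation of Lemma \ref{lem:1microell} or a symmetric repetition. The only mild subtlety is being explicit that the constructions entering Lemma \ref{lem:1microell}, namely the reduction via a local parametrix to the form $HC=A\otimes I - R$ with $R\in\elbisi{-\infty}{0}$ followed by an application of Lemma \ref{lem:sminonevar}, transpose correctly to the $i=2$ case, producing $CH=I\otimes A-R'$ with $R'\in\elbisi{0}{-\infty}$ and using the corresponding smoothing property of $\elbisi{0}{-\infty}$ from Lemma \ref{lem:sminonevar}. Since Lemma \ref{lem:sminonevar} was stated symmetrically in the two variables, this transposition is automatic.
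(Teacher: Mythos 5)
Your proposal is correct and matches the paper's own proof, which simply cites Lemma \ref{lem:1microell} for $i=1$ and leaves the $i=2$ case as a symmetric repetition. One small slip: the transposed version of Lemma \ref{lem:locpar} should give a \emph{left} parametrix $HC = I\otimes A - R'$ with $R'\in\elbisi{0}{-\infty}$ (not $CH = I\otimes A - R'$), since the argument of Lemma \ref{lem:1microell} rewrites $I\otimes A$ as $HC+R'$ so that $HCu$ appears, to which one can apply the non-characteristic operator coming from the hypothesis on $\WFbi{2}(Cu)$ together with microlocality of $H$.
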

\noindent We do, however, not obtain full microellipticity, i.e. with respect to the $\WFbi{12}$-component. This can be seen by the following example:
\begin{ex}
Consider $C=-\Delta\otimes(-\Delta)$, $u=\delta\otimes 1+1\otimes\delta$. Then $\Charbi{12}(C)=\emptyset$ and $Cu=0\in\Sm(\Omega_1\times\Omega_2)$, i.e. $\WFbi{12}(C u) = \emptyset$. But take any $A_1$ non-characteristic at $(0,\xi_1)$, $\xi_1\neq 0$. Then $(A_1\otimes I) u=(A_1\delta)\otimes 1$ which is never in $\Sm(\Omega_1,\diprimo(\Omega_2))$, and this means that $\WFbi{12}(u)$ is non-empty, because the \eqref{eq:wft32}- and, by a similar argument, \eqref{eq:wft33}-requirements fail to hold.
\end{ex}
\begin{remark}
The counterexample to full microellipticity could be circumvented by imposing stronger invertibility conditions in the definition of $\Charbi{12}$. This would, however, break the characterization of the wave front set of Remark \ref{rem:wfchar}, and lead to a loss of local information, while yielding no interesting cases not already covered by Corollary \ref{coro:ell}.
\end{remark}
\noindent With our definition we obtain, however, the following Lemma, which can be regarded as a microellipticity result for the \eqref{eq:wft31}-part of Definition \ref{def:wft}, for operators given by a tensor product.
\begin{lem}\label{lem:tenslem}
Let $C_i\in \mathrm{L}^{m_i}(\Omega_i)$, $u\in\dist(\Omega_1\times\Omega_2)$. If $$(x_1,x_2,\xi_1,\xi_2)\notin \Charbi{12}(C_1\otimes C_2) \cup \WFbi{12}( (C_1\otimes C_2) u)$$
there exist operators  $H_i\in \mathrm{L}^{0}(\Omega_i)$, non-characteristic at $(x_i,\xi_i)$, such that $(H_1\otimes H_2) u\in\Sm(\Omega_1\times\Omega_2)$.
\begin{proof}
By the standard pseudodifferential calculus we can pick $B_i\in \mathrm{L}^{-m_i}(\Omega_i)$ non-characteristic at $(x_i,\xi_i)$. Then the product $B_iC_i\in\mathrm{L}^{0}$ is non-characteristic at $(x_i,\xi_i)$. Proposition \ref{pr:microloc} and the definition of the bi-wave front set guarantee us the existence of $A_i\in\mathrm{L}^{0}$, non-characteristic at $(x_i,\xi_i)$, such that $(A_1\otimes A_2) (B_1C_1\otimes B_2C_2)u\in\Sm(\Omega_1\times\Omega_2)$. Therefore $H_i:=A_iB_iC_i$ fulfils the claim. 
\end{proof}
\end{lem}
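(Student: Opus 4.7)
The plan is to construct $H_i$ as a composition $A_iB_iC_i$, where the factor $B_i$ is inserted purely to restore the overall order to zero and the $A_i$ come from the $\WFbi{12}$-hypothesis after pushing it through the auxiliary bisingular operator $B_1\otimes B_2$. A direct application of Definition~\ref{def:wft} to the hypothesis $(x_1,x_2,\xi_1,\xi_2)\notin\WFbi{12}((C_1\otimes C_2)u)$ would yield non-characteristic $A_i'\in\mathrm{L}^0_{\mathrm{cl}}(\Omega_i)$ with $(A_1'C_1\otimes A_2'C_2)u\in\Sm$, but $A_i'C_i$ has order $m_i$ rather than $0$; a parametrix-like insertion of $B_i$ is the trick that will fix this.

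To set things up, I first use the hypothesis $(x_1,x_2,\xi_1,\xi_2)\notin\Charbi{12}(C_1\otimes C_2)$. Since the $(1,2)$-principal symbol of a tensor product factorises as
\begin{equation*}
\sigma^{12}(C_1\otimes C_2)(x_1,x_2,\xi_1,\xi_2)=\sigma(C_1)(x_1,\xi_1)\,\sigma(C_2)(x_2,\xi_2),
\end{equation*}
this is equivalent to each $C_i$ being non-characteristic at $(x_i,\xi_i)$ in the classical pseudodifferential sense. I may therefore pick any $B_i\in\mathrm{L}^{-m_i}(\Omega_i)$ non-characteristic at $(x_i,\xi_i)$, so that $B_iC_i\in\mathrm{L}^0(\Omega_i)$ is non-characteristic at $(x_i,\xi_i)$ as a composition of two non-characteristic factors.

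Next, I push the $\WFbi{12}$-hypothesis through $B_1\otimes B_2\in\elbisi{-m_1}{-m_2}(\Omega_1,\Omega_2)$. By microlocality of bisingular operators, Proposition~\ref{pr:microloc},
\begin{equation*}
\WFbi{}\big((B_1C_1\otimes B_2C_2)u\big)=\WFbi{}\big((B_1\otimes B_2)(C_1\otimes C_2)u\big)\subseteq\WFbi{}\big((C_1\otimes C_2)u\big),
\end{equation*}
so $(x_1,x_2,\xi_1,\xi_2)$ is missed by $\WFbi{12}\big((B_1C_1\otimes B_2C_2)u\big)$. Applying Definition~\ref{def:wft} to this new distribution produces $A_i\in\mathrm{L}^0_{\mathrm{cl}}(\Omega_i)$, each non-characteristic at $(x_i,\xi_i)$, such that $(A_1\otimes A_2)(B_1C_1\otimes B_2C_2)u\in\Sm(\Omega_1\times\Omega_2)$.

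Setting $H_i:=A_iB_iC_i\in\mathrm{L}^0(\Omega_i)$ finishes the argument: each $H_i$ is non-characteristic at $(x_i,\xi_i)$ as a composition of three operators non-characteristic there, and the rearrangement $(H_1\otimes H_2)u=(A_1\otimes A_2)(B_1C_1\otimes B_2C_2)u$ is smooth by the previous step. No step presents a genuine obstacle; the only conceptual point worth stressing is that microlocality of bisingular operators, combined with the order-reduction trick supplied by $B_i$, is precisely what transports the smoothness information from $(C_1\otimes C_2)u$ back to $u$ after testing against tensor products of operators of order zero.
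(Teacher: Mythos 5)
Your proof is correct and takes essentially the same route as the paper's: reduce the order with $B_i$, push the $\WFbi{12}$-hypothesis through $B_1\otimes B_2$ via Proposition~\ref{pr:microloc}, extract $A_i$ from Definition~\ref{def:wft}, and set $H_i=A_iB_iC_i$. You merely make explicit two steps the paper leaves implicit, namely the factorisation $\sigma^{12}(C_1\otimes C_2)=\sigma(C_1)\sigma(C_2)$ that converts the $\Charbi{12}$-hypothesis into non-characteristicity of each $C_i$, and the fact that microlocality applied to $B_1\otimes B_2$ is what transports the $\WFbi{12}$-condition onto $(B_1C_1\otimes B_2C_2)u$.
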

\section{Comparison with $SG$ calculus} \label{sec:4}
\label{sec:sg}
\noindent In this section we will compare bisingular calculus with $SG$ calculus. $SG$ calculus is a global calculus obtained from the classical calculus by treating the variables and covariables equivalently by imposing on the symbols similar estimates as in bisingular calculus. These a priori formal similarities lead to interesting similarities in the calculus and in the analysis of singularities. However, the two calculi also differ in important aspects, as we will point out throughout the section.
\begin{deff}
A function $a(x,\xi) \in \Sm(\erre^{2n})$ is called a $SG$ symbol belonging to $\SG^{\mu, m}(\erre^n) := \SG^{\mu, m}$ iff for every $\alpha, \beta \in \zetapiu$ there exists a constant $C_{\alpha,\beta} > 0$ such that
\begin{equation*}
|D^\alpha_x D^\beta_\xi a(x,\xi)| \leq C_{\alpha, \beta}\normxi^{\mu - |\beta|}\normics^{m - |\alpha|}
\end{equation*}
for every $x,\xi \in \erre^n$.
A $SG$ pseudodifferential operator is an operator of the form
\begin{equation*}
\begin{split}
Au(x) := \int e^{ix\cdot\xi}\,a(x,\xi)\hat{u}(\xi)\,\dbar\xi,\,\,u \in \esse, 
\end{split}
\end{equation*} 
and the class of operators with symbols in $\SG^{\mu, m}$  is denoted by $\SGL^{\mu,m}$.
\end{deff}
\noindent A symbol $a\in \SG^{m_1,m_2}$ is called $\SG$ classical if it admits a homogeneous expansion with respect to $\xi$, for $|\xi|>>1$, a homogeneous expansion in $x$, for $|x|>>1$, and the two expansions satisfy certain compatibility conditions, we refer here to \cite{Cor95, ES97} for a precise definition of classical symbols. We limit our attention (in this context) to classical operators, i.e. such that their symbols are $SG$ classical.
\noindent As usual one proceeds to develop a calculus for these operators. As every classical $SG$ operator $A$ is also classical pseudodifferential operator, it admits a principal symbol $\sigma^\psi(A)$, homogeneous in the first variable. In addition, by exchanging the roles of the variables and covariables one obtains a symbol $\sigma^e(A)$, homogeneous in the second variable. The two satisfy a compatibility condition, i.e. that there is a third, bihomogeneous symbol $\sigma^{\psi e}(A)$, the leading term of the corresponding expansions of the $\psi$ and $e$-symbols. The \textit{principal homogeneous symbol} of the operator is then the couple $(\sigma^\psi(A),\sigma^e(A))$ which gives rise to the principal symbol 
$$\mathrm{Sym}_p(x,\xi)=\phi_{\psi}(\xi)\sigma^{\psi}(A)+\phi_e(x)\sigma^{e}(A)-\phi_\psi(x)\phi_e(\xi)\sigma^{\psi e}(A),$$
where $\phi_*$ are 0-excision functions. \\
\indent So far, this is very similar to the bisingular calculus, but the expansion formula for the symbol of a product is in fact a lot simpler. The operator compositions arising there are not present, and the composition formula is just the one corresponding to the $c^{12}$-term in Theorem \ref{thm:compos}.\\
This leads to a definition of ellipticity close to our notion of $12$-ellipticity, as no such thing as full invertibility of the symbols as operators is needed in the parametrix construction:
\begin{deff} 
A symbol $a \in \SG^{\mu, m}$ is $SG$-elliptic iff there exist constants $R, C_1,C_2 > 0$ such that
\begin{equation*}
 C_1 \normxi^\mu \normics^m \leq |a(x,\xi)| \leq C_2 \normxi^\mu \normics^m
\end{equation*}
when $|x|+|\xi| \geq R$.
\end{deff}
\noindent With this notion of ellipticity, we have the Fredholm property, i.e. an $SG$-elliptic operator admits a parametrix in the calculus.\\
Another important aspect to note is that in $SG$ calculus we have
\begin{pipi} 
Let $p \in \SG^{m, \mu}(\RR^n), q \in \SG^{r, \nu}(\RR^n)$. Then the commutator $[P,Q] := PQ - QP$ belongs to $\SGL^{m+r-1, \mu+\nu-1}(\RR^n)$.
\end{pipi}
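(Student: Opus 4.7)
The plan is to apply the $SG$ composition formula directly and observe, in contrast to Corollary \ref{coro:comm}, that the scalar nature of the asymptotic expansion causes the leading-order cancellation to improve the order in both entries simultaneously.

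First, I would invoke the $SG$ composition theorem: for $p \in \SG^{m,\mu}$ and $q \in \SG^{r,\nu}$, the product $PQ$ is again an $SG$ operator whose symbol $p \# q$ admits an asymptotic expansion of the form
\begin{equation*}
p \# q(x,\xi) \sim \sum_{\alpha \in \zetapiu} \frac{1}{\alpha!}\, \partial^{\alpha}_\xi p(x,\xi)\, D^{\alpha}_x q(x,\xi),
\end{equation*}
with each summand belonging to $\SG^{m+r-|\alpha|,\,\mu+\nu-|\alpha|}$. As hinted at in the remarks preceding the statement, this is the vastly simplified analogue of Theorem \ref{thm:compos}: only the $c^{12}$-type contributions survive, the genuine operator-valued compositions $\circ_1,\circ_2$ are absent, precisely because the $SG$ symbol estimates are formulated with respect to scalar weights in both $x$ and $\xi$.

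Next, I would apply this expansion to both $PQ$ and $QP$ and subtract. The $\alpha = 0$ contributions $p\cdot q$ and $q\cdot p$ coincide, since $SG$ symbols are scalar-valued functions, so the expansion of the commutator symbol starts with the $|\alpha|=1$ terms, which yield (up to a factor $i$) the ordinary Poisson bracket $\{p,q\}$. By the order arithmetic above, this leading term already lies in $\SG^{m+r-1,\,\mu+\nu-1}$, and all subsequent $|\alpha|\geq 2$ terms lie in strictly lower-order spaces. Reassembling via the standard asymptotic summation in the $SG$ calculus yields a symbol $s \in \SG^{m+r-1,\,\mu+\nu-1}$ with $[P,Q]-Op(s)$ smoothing, which proves the assertion.

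There is no substantial obstacle here: the statement follows by a direct book-keeping argument once the composition formula is in hand. The genuinely conceptual point, worth emphasising immediately after the proof, is the contrast with the bisingular setting of Corollary \ref{coro:comm}. In the $SG$ calculus every term of $p\#q$ is a pointwise product, so the single $\alpha=0$ cancellation improves the order in both indices at once; in the bisingular calculus, by contrast, the presence of the separate $\circ_1,\circ_2$ compositions produces two independent leading contributions, each dropping only one index, whence only the weaker splitting $[A,B] \in \mathrm{L}^{m_1+p_1-1,\,m_2+p_2}+\mathrm{L}^{m_1+p_1,\,m_2+p_2-1}$ holds.
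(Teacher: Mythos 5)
Your proof is correct and is the standard argument for this well-known fact. The paper itself gives no proof — the proposition is quoted as a cited property of the $SG$ calculus (\cite{Cor95}, \cite{CM03}) and serves only as a foil to Corollary \ref{coro:comm} — so there is nothing in the paper to compare against. Your order bookkeeping is exactly right: each term $\frac{1}{\alpha!}\partial_\xi^\alpha p\,D_x^\alpha q$ lies in $\SG^{m+r-|\alpha|,\,\mu+\nu-|\alpha|}$ because a $\xi$-derivative lowers the first order and an $x$-derivative lowers the second; the $\alpha=0$ contributions to the commutator cancel since $SG$ symbols are scalar; and the surviving tail is dominated by the $|\alpha|=1$ terms, giving $[P,Q]\in\SGL^{m+r-1,\,\mu+\nu-1}$ after the usual asymptotic summation. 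The contrast you draw with the operator-valued $\circ_1,\circ_2$ compositions in the bisingular expansion, which prevent a simultaneous drop in both orders, is precisely the structural observation the paper uses this proposition to highlight.
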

\noindent while in the bisingular setting the analogon in fact does not hold true, see Corollary \ref{coro:comm} and Example \ref{ex:comm} which illustrates that the difference stems from an interaction of the $\Omega_1$ and $\Omega_2$-parts of the operators which is not present in the $SG$ case, where variables and covariables are independent.\newline
The notion of $SG$ calculus can be used to introduce a global analysis of singularities. It turns out that this exhibits interesting similarities with the above analysis of bisingular operators. In the following, we refer to \cite{Cor95}, \cite{CM03}; see also \cite{CJT13a}, \cite{CJT13b}. First, we introduce $SG$ characteristic sets and the $SG$ wave front set.

\begin{deff}[\textbf{$SG$ characteristic sets}]
Let $A \in \SGL^{m_1,m_2}$. Define the $SG$ characteristic set of $A$ as
\begin{equation*}
\Char_{\mathrm{SG}} (A) = \Char_{\mathrm{SG}}^{\psi}(A) \cup \Char_{\mathrm{SG}}^{e}(A) \cup \Char_{\mathrm{SG}}^{\psi e}(A),
\end{equation*}
where  
\begin{align*}
\Char_{\mathrm{SG}}^\psi(A) &= \{(x,\xi) \in \RR^n \times (\RR^n \setminus 0) : \sigma^\psi(A) (x, \xi) = 0\}, \\
\Char_{\mathrm{SG}}^e(A) &= \{(x,\xi) \in (\RR^n \setminus 0) \times \RR^n : \sigma^e(A) (x, \xi) = 0\}, \\
\Char_{\mathrm{SG}}^{\psi e}(A) &= \{(x,\xi) \in (\RR^n \setminus 0) \times (\RR^n \setminus 0) : \sigma^{\psi e}(A) (x, \xi) = 0\}.
\end{align*}
\end{deff}
\begin{deff}[\textbf{$SG$ wave front set}]
Let $u \in \esseprimo(\RR^n)$. Define the $SG$ wave front set of $u$ as
\begin{equation*}
\WFsg{} (u) = \WFsg{\psi}(u) \cup \WFsg{e}(u) \cup \WFsg{\psi e}(u),
\end{equation*}
where  
\begin{align*}
\WFsg{\psi}(u)&=\bigcap_{\substack {A\in \SGL^{0,0}(\RR^n) \\  A u\in\esse(\RR^n)}} \Char_{\mathrm{SG}}^\psi(A), \\
\WFsg{e}(u)&=\bigcap_{\substack {A\in \SGL^{0,0}(\RR^n) \\  A u\in\esse(\RR^n)}} \Char_{\mathrm{SG}}^e(A), \\
\WFsg{\psi e}(u)&=\bigcap_{\substack {A\in \SGL^{0,0}(\RR^n) \\  A u\in\esse(\RR^n)}} \Char_{\mathrm{SG}}^{\psi e}(A). 
\end{align*}
\end{deff}
\noindent This notion exhibits the following properties:
\begin{pipi}[\textbf{Properties of the $SG$ wave front set}]
Let $u , v \in \esseprimo(\RR^n)$, $f \in \esse(\RR^n)$. Then:
\begin{enumerate}
\item $\WFsg{}(u)$ is a closed set and $\WFsg{\psi}(u)$ is conical with respect to the variable $x$, $\WFsg{e}(u)$ with respect to the covariable $\xi$ and $\WFsg{}(u)$ with respect to both of them independently,
\item $(x,\xi)\in\WFsg{}(u)\Leftrightarrow(\xi,-x)\in\WFsg{}(\mathcal{F}u)$ \textit{(Fourier Symmetry)},
\item $\WFsg{}(u+v)\subseteq\WFsg{}(u)\cup\WFsg{}(v)$; $\WFsg{}(fu)\subseteq\WFsg{}(u)$,
\item $\WFsg{}(u)=\emptyset\Leftrightarrow u\in\esse(\RR^n)$ \textit{(Global regularity)}.
\end{enumerate}
\end{pipi}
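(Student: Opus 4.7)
My plan is to establish the four properties in turn. The first three follow rather directly from the definitional structure of $\WFsg{}$ and from standard properties of the $SG$ calculus, while property (4), the global regularity characterization, is the substantial step.

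For property (1), closedness is immediate: for each fixed $A \in \SGL^{0,0}$ the sets $\Char_{\mathrm{SG}}^{*}(A)$, $* \in \{\psi, e, \psi e\}$, are zero sets of continuous functions, hence closed, and arbitrary intersections of closed sets are closed. The conicity assertions reduce to the homogeneity of the principal symbols: $\sigma^{\psi}(A)$ is homogeneous in $\xi$ for large $|\xi|$, $\sigma^{e}(A)$ in $x$ for large $|x|$, and $\sigma^{\psi e}(A)$ in both jointly. These properties transfer to the characteristic sets and are preserved by intersection.

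Property (2) is a consequence of the Fourier symmetry of $SG$ calculus. For $A \in \SGL^{0,0}(\RR^n)$ the conjugate $\tilde A := \mathcal{F} A \mathcal{F}^{-1}$ again lies in $\SGL^{0,0}(\RR^n)$, with principal symbols obeying, modulo lower order terms, $\sigma^{\psi}(\tilde A)(x,\xi) = \sigma^{e}(A)(\xi,-x)$ and symmetrically for the remaining components. Since $\mathcal{F} : \esse(\RR^n) \to \esse(\RR^n)$ is a topological isomorphism, $A u \in \esse$ if and only if $\tilde A (\mathcal{F} u) \in \esse$, and the equivalence $(x,\xi) \in \WFsg{}(u) \Leftrightarrow (\xi,-x) \in \WFsg{}(\mathcal{F}u)$ follows by tracing the intersections defining the two sets. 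For property (3), given $(x_0,\xi_0) \notin \WFsg{}(u) \cup \WFsg{}(v)$ I pick $A_u, A_v \in \SGL^{0,0}$ non-characteristic at $(x_0,\xi_0)$ with $A_u u, A_v v \in \esse$, and construct, by a symbolic localization in a narrow conic neighborhood of $(x_0,\xi_0)$ together with an asymptotic summation controlling the commutator $[A_u, A_v] \in \SGL^{-1,-1}$, a single $A \in \SGL^{0,0}$ non-characteristic at $(x_0,\xi_0)$ with $A(u+v) \in \esse$. For the inclusion $\WFsg{}(fu) \subseteq \WFsg{}(u)$, I observe that $M_f$, multiplication by $f \in \esse(\RR^n)$, has the $x$-only symbol $f(x) \in \SG^{0,-\infty}$, so $M_f \in \SGL^{0,-\infty}$, and the standard microlocality of $SG$ operators with respect to $\WFsg{}$ yields the claim.

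The main obstacle is property (4). One direction is trivial: if $u \in \esse(\RR^n)$ then $I u \in \esse$ and $\Char_{\mathrm{SG}}(I) = \emptyset$, whence $\WFsg{}(u) = \emptyset$. For the converse, assuming $\WFsg{}(u) = \emptyset$, I combine a global covering argument with the Fredholm theory of $SG$-elliptic operators. At each point of the three bi-conic strata of the $SG$ phase space, the hypothesis yields some $A \in \SGL^{0,0}$ with $A u \in \esse$ which is non-characteristic in the corresponding $\psi$-, $e$-, or $\psi e$-component. The delicate step is to patch these local data globally: using compactness of the bi-radial compactification of $\RR^n \times \RR^n$ (which faithfully captures the three-component structure of $\Char_{\mathrm{SG}}$), I extract a finite family $\{A_j\}$ whose joint non-characteristic sets cover all three strata, and assemble from them a globally $SG$-elliptic operator $E \in \SGL^{0,0}$, for instance via $E := \sum_j A_j^{*} A_j$ with symbolic modifications, still satisfying $E u \in \esse$ by $\esse$-preservation of every ingredient. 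The $SG$-parametrix construction then provides $B \in \SGL^{0,0}$ and smoothing $K \in \SGL^{-\infty,-\infty}$ with $B E = I - K$, and since smoothing $SG$ operators map $\esseprimo(\RR^n)$ into $\esse(\RR^n)$, I conclude $u = B(E u) + K u \in \esse(\RR^n)$.
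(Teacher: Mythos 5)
The paper does not prove this proposition; it is quoted from the $SG$ literature (with explicit pointers to \cite{Cor95} and \cite{CM03}), so there is no internal proof to compare your attempt against. Your sketch broadly follows the standard route used in those references. Parts (1), (2) and (4) are argued along the correct lines: closedness and conicity follow from the corresponding properties of the characteristic sets, the Fourier symmetry comes from conjugating operators by $\mathcal{F}$ which interchanges the $\psi$- and $e$-principal symbols, and the nontrivial direction of (4) is obtained by a finite covering of the compactified phase space, followed by assembling a globally $SG$-elliptic $E=\sum A_j^* A_j$ with $Eu\in\esse$ and applying the parametrix.

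The one place that is genuinely hand-wavy is the sum rule in (3). The claim that ``an asymptotic summation controlling the commutator $[A_u, A_v] \in \SGL^{-1,-1}$'' yields a single $A\in\SGL^{0,0}$ non-characteristic at $(x_0,\xi_0)$ with $A(u+v)\in\esse$ is not a complete argument: knowing $[A_u,A_v]$ is one order lower does not give $[A_u,A_v]v\in\esse$, and it is not clear how the commutator produces the required operator at all. The clean way to close this is by a microlocal parametrix/cut-off argument: construct a microlocal parametrix $P_u$ of $A_u$ near $(x_0,\xi_0)$ so that $P_u A_u = I - R_u$ with the symbol of $R_u$ vanishing on a bi-conic neighborhood of $(x_0,\xi_0)$, choose $\Phi\in\SGL^{0,0}$ non-characteristic at $(x_0,\xi_0)$ whose symbol is supported inside the non-characteristic region of both $A_u$ and $A_v$ (and inside the vanishing regions of $R_u,R_v$), and observe $\Phi u = \Phi P_u (A_u u) + \Phi R_u u\in\esse$, similarly $\Phi v\in\esse$, hence $\Phi(u+v)\in\esse$. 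The multiplication statement via $M_f\in\SGL^{0,-\infty}$ and microlocality is fine, modulo the fact that microlocality of $SG$ operators itself (stated as a separate proposition later in the section) then has to be invoked as an input.
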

\noindent Already we see the similarities, but also apparent differences, with the bisingular notion: 
\begin{itemize}
\item Fourier transformation (i.e. exchange of variables and covariables) corresponds to the exchange of variables $x_1$ and $x_2$ in Proposition \ref{pr:wfprop}.
\item The conical properties of the individual components of the wave front sets correspond to the homogeneity properties of the corresponding principal symbol part.
\item The global (i.e.) $\esse$-regularity is of course due to the fact that $SG$-calculus imposes bounds on the variables also.
\end{itemize}
Moreover, $SG$ operators satisfy $SG$ microlocalty and microellipticity, i.e. (just as in the bisingular case):
\begin{pipi}[\textbf{$SG$ Microlocality and $SG$ Microellipticity}]
Let $u \in \esseprimo(\RR^n)$ and $C \in \SGL^{m,\mu}$. Then we have the double inclusion 
\begin{equation*}
\WFsg{}( C u)\subseteq\WFsg{}(u) \subseteq \Char_{\mathrm{SG}}(C) \cup \WFsg{}( C u).
\end{equation*}
\end{pipi}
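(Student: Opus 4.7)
The plan is to prove both inclusions componentwise in the triple decomposition into $\psi$-, $e$-, and $\psi e$-parts, in each case adapting the parametrix arguments used in Lemmas \ref{lem:1microloc} and \ref{lem:1microell} for the bisingular setting. The $SG$ case is in fact technically simpler than the bisingular one: the composition formula only contains the analogue of the $c^{12}$-term of Theorem \ref{thm:compos}, with no operator-valued compositions arising, so that disjointness of essential symbol supports translates directly into $SG$-smoothness of the composition without the extra subtleties illustrated by Corollary \ref{coro:comm}.

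For microlocality $\WFsg{}(Cu) \subseteq \WFsg{}(u)$, fix a component $* \in \{\psi, e, \psi e\}$ and a point $(x_0,\xi_0) \notin \WFsg{*}(u)$. By definition there exists $A \in \SGL^{0,0}$ with $Au \in \esse$ and $(x_0,\xi_0) \notin \Char_{\mathrm{SG}}^{*}(A)$. By the standard $SG$ calculus one constructs $A' \in \SGL^{0,0}$ whose $*$-principal symbol vanishes on a conic neighborhood $\Gamma$ of $(x_0,\xi_0)$ and such that $A+A'$ is $SG$-elliptic; the $SG$-parametrix construction then yields $P \in \SGL^{0,0}$ with $P(A+A') = I - R$, where $R \in \SGL^{-\infty,-\infty}$. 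Pick any $B \in \SGL^{0,0}$ non-characteristic at $(x_0,\xi_0)$ in the $*$-sense whose $*$-symbol is supported in a proper subcone of $\Gamma$, and decompose
\begin{equation*}
BCu = BCPAu + BCPA'u + BCRu.
\end{equation*}
The first term lies in $\esse$ because $Au \in \esse$ and every $SG$ operator preserves $\esse$; the third because $BCR \in \SGL^{-\infty,-\infty}$; the second because the essential $*$-supports of $B$ and $A'$ are disjoint, which together with the simple $SG$ composition expansion forces $BCPA' \in \SGL^{-\infty,-\infty}$. Hence $(x_0,\xi_0) \notin \WFsg{*}(Cu)$.

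For microellipticity $\WFsg{}(u) \subseteq \Char_{\mathrm{SG}}(C) \cup \WFsg{}(Cu)$, fix a component $*$ and take $(x_0,\xi_0) \notin \Char_{\mathrm{SG}}^{*}(C) \cup \WFsg{*}(Cu)$. The hypothesis $(x_0,\xi_0) \notin \WFsg{*}(Cu)$ produces $B \in \SGL^{0,0}$ non-characteristic at $(x_0,\xi_0)$ in $*$ with $BCu \in \esse$. Choose any $SG$-elliptic $Q \in \SGL^{-m_1,-m_2}$ and set $A := QBC$; then $A \in \SGL^{0,0}$ and multiplicativity of the three principal symbols yields
\begin{equation*}
\sigma^{*}(A)(x_0,\xi_0) = \sigma^{*}(Q)(x_0,\xi_0)\,\sigma^{*}(B)(x_0,\xi_0)\,\sigma^{*}(C)(x_0,\xi_0) \neq 0,
\end{equation*}
since all three factors are non-zero by $SG$-ellipticity of $Q$, the choice of $B$, and the assumption on $C$. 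As $Q$ preserves $\esse$, we also have $Au = Q(BCu) \in \esse$, so $(x_0,\xi_0) \notin \WFsg{*}(u)$.

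The only step requiring genuine care is the middle term $BCPA'u$ in the microlocality argument: vanishing of the leading $*$-symbol is immediate, but one must propagate this through all lower-order terms of the $SG$ composition expansion. Since $B$ and $A'$ are constructed with essential $*$-supports lying in strictly disjoint conic sets, a standard iteration on the symbol expansion gives the required full $SG$-smoothness, in close analogy with the support argument used in the proof of Lemma \ref{lem:1microloc}. Outside of this, the argument is purely organizational: one tracks the three components and invokes the multiplicativity of principal symbols under $SG$ composition.
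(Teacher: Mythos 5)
The paper itself does not prove this proposition; it is stated in Section~\ref{sec:sg} as background from the $SG$ literature, with a pointer to \cite{Cor95}, \cite{CM03}. So there is no internal proof to compare against, and the question is whether your argument is sound on its own terms.

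Your microellipticity argument is correct and quite clean: setting $A:=QBC$ with $Q\in\SGL^{-m,-\mu}$ (you wrote $\SGL^{-m_1,-m_2}$, but $C\in\SGL^{m,\mu}$, so this should be $\SGL^{-m,-\mu}$) elliptic, using multiplicativity of all three principal symbols, and observing $Au=Q(BCu)\in\esse$ is exactly the right move given the intersection characterisation of $\WFsg{*}$.

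The microlocality half, however, has a genuine gap precisely at the step you flag yourself, and the fix is \emph{not} the one you invoke. In the bisingular Lemma~\ref{lem:1microloc}, disjointness of the $\Omega_1$-supports of $H$ and $A'$ only yields an operator in $\mathrm{L}^{-\infty,0}$, and that suffices because the target regularity is the weak space $\Sm(\Omega_1,\diprimo(\Omega_2))$. In the $SG$ definition the target is always the much stronger $\esse(\RR^n)$, for \emph{every} component of $\WFsg{}$. If you only arrange disjointness of the $*$-supports of $B$ and $A'$, the composition expansion gives $BCPA'$ of $*$-order $-\infty$, i.e.\ an element of $\SGL^{-\infty,\,\cdot}$ or $\SGL^{\cdot,\,-\infty}$, not of $\SGL^{-\infty,-\infty}$; such an operator does not map $\esseprimo$ into $\esse$, so you cannot conclude $BCu\in\esse$. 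To close the gap you must additionally choose $B$ residual at the remaining boundary faces of the $SG$-compactification (e.g.\ for $*=\psi$, take $B$ with full symbol compactly supported in $x$, so $B\in\SGL^{0,-\infty}$; for $*=\psi e$ one must control both cones). With that strengthening the disjoint-support argument does give $BCPA'\in\SGL^{-\infty,-\infty}$ and the proof goes through; as written, ``the essential $*$-supports are disjoint'' is not enough.
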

\noindent This again stems from an individual double inclusion with respect to each wave front set component.
A capital difference lies, however, in the structure of the wave front set. For the components of the $SG$ wave front set have
\begin{equation*}
\begin{split}
\WFsg{}(u)\subset \big(\RR^{n}\times (\RR^{n}\setminus 0)\big)\cup\big((\RR^{n}\setminus 0)\times \RR^{n}\big)\cup\big((\RR^{n}\setminus 0)\times (\RR^{n}\setminus0)\big).
\end{split}
\end{equation*}
Here, the $\big(\RR^n\times(\RR^n\setminus 0)\big)\ni(x,\xi)$ component corresponds exactly to singularities at finite arguments $x$ with propagation direction $\xi$, the $\big(\RR^n\times(\RR^n\setminus 0)\big)$ component yields the same interpretation in the Fourier transformed space (growth singularities of $u$ become differential singularities of $\hat u$) and the $\big((\RR^n\setminus 0)\times(\RR^n\setminus 0)\big)$ component corresponds to high oscillations present at infinite arguments.\\
In the bisingular case, new phenomena are present. The $12$-component has the classical interpretation, in the sense that it includes all the `classical' singularities (see Lemma \ref{lem:wfcl}), but the other components lose some amount of localization. This is reflected in the structure of the ($1$- and $2$-components) of the bi-wave front set. In fact,
\begin{equation*}
\begin{split}
\WFbi{}(u)\subset (\Omega_1\times\Omega_2)\times(\RR^{n_1+n_2}\setminus 0)=\ &(\Omega_1\times\Omega_2)\times\big((\RR^{n_1}\setminus 0)\times \{0\}\big)\\
\sqcup\ &(\Omega_1\times\Omega_2)\times\big(\{0\}\times(\RR^{n_2}\setminus 0)\big)\\
\sqcup\ &(\Omega_1\times\Omega_2)\times\big((\RR^{n_1}\setminus 0)\times(\RR^{n_2}\setminus 0)\big),
\end{split}
\end{equation*}
where if e.g. for one $x_1$ we have $(x_1,x_2,0,\xi_2)$ present in the wave front set, all $(y,x_2,0,\xi_2)$ are present in the wave front set. This is due to the fact that bi-ellipticity involves true invertibility, i.e. a non-local requirement.\\
Another difference arises as follows: the $1$ and $2$-component can be understood as the boundary faces of the $12$-component, whereas in the $SG$ case the $\psi e$-component is interpreted as the corner of the wave front space where the $e$- and $\psi$-component meet, i.e. the roles as boundaries are interchanged, see Figure \ref{fig:sgbicomparison}.\\
\begin{figure}[!ht]
\begin{center}
\begin{tikzpicture}[
        scale=1.0,
        MyPoints/.style={draw=black,fill=white,thick},
        Segments/.style={draw=black!50!red!70,thick},
        ]
    \draw[->] (-0.5,0)--(4.5,0) node[right]{$x$};
    \draw[->] (0,-0.5)--(0,4.5) node[above]{$\xi$};

    \draw[->] (5.5,0)--(10.5,0) node[right]{$\xi_1$};
    \draw[->] (6,-0.5)--(6,4.5) node[above]{$\xi_2$};
    
	\node at (2,-0.8) {$\WFsg{}$};    
	\node at (8,-0.8) {$\WFbi{}$}; 
    
    \draw[-] (0,4)--(4,4) node[above,midway] {$\WFsg{\psi}$};
    \draw[-] (4,0)--(4,4) node[right,midway] {$\WFsg{e}$};
    
    \draw[-] (6,4)--(10,4) node[above right]{$\WFbi{12}$};
    \draw[-] (10,0)--(10,4);    

    \coordinate (A) at (4,4);
    \coordinate (B) at (6,4);
    \coordinate (C) at (10,0);

    \fill[MyPoints] (A) circle (0.8mm) node[above right]{$\WFsg{\psi e}$};
    \fill[MyPoints] (B) circle (0.8mm) node[above right]{$\WFbi{2}$};
    \fill[MyPoints] (C) circle (0.8mm) node[above right]{$\WFbi{1}$};
\end{tikzpicture}
\caption{A schematic comparison of the components of $\WFsg{}$ and $\WFbi{}$}
\label{fig:sgbicomparison}
\end{center}
\end{figure}
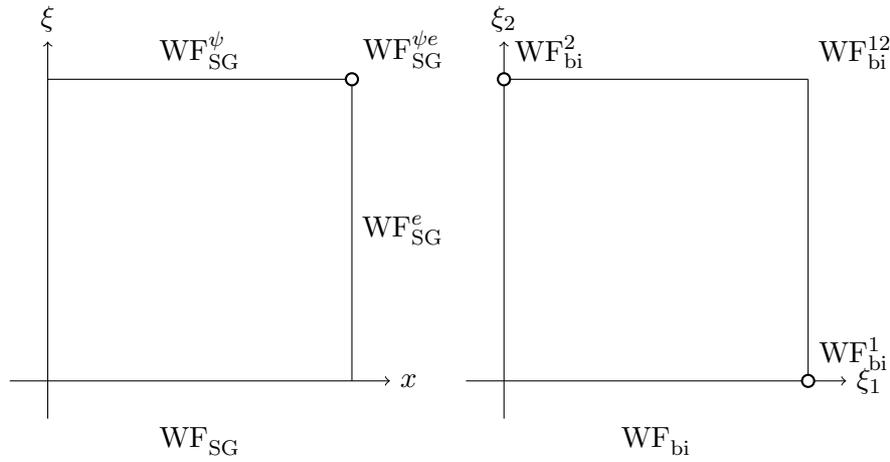
\noindent This has the following consequence:
\begin{ex}
Consider the one dimensional case. Following here Example 2.7. in \cite{CM03}, there exists a distribution $u(x) = e^{i x^2/2}$, $x \in \RR$, such that $\WFsg{\psi} (u) = \emptyset = \WFsg{e}(u)$ and $\WFsg{\psi e} (u) = (\RR \setminus 0) \times (\RR \setminus 0)$. \newline
\noindent However, there cannot exist a distribution $v \in \eeprimo(\Omega_1 \times \Omega_2)$, $\Omega_1, \Omega_2 \subset \RR$, such that $\WFbi{1} (v) = \emptyset = \WFbi{2} (v)$ and $\WFbi{12} (v) = \Omega_1 \times \Omega_2 \times (\RR\setminus 0) \times (\RR \setminus 0)$. This is because $\WFbi{12} (v) = \Omega_1 \times \Omega_2 \times (\RR\setminus 0) \times (\RR \setminus 0)$ is an open set, and $\WFbi{}(v)$ has to be closed. \newline
Similarly $\tilde v= \delta \otimes 1$ has $\WFbi{1}(\tilde v) = (\{0\} \times \Omega_2)\times ((\RR\setminus 0)\times \{0\})$, $\WFbi{12}(\tilde v)= \emptyset =\WFbi{2}(\tilde v)$, but there cannot exist a distribution $\tilde u$ such that $\WFsg{e} (\tilde u)=\RR\times(\RR\setminus 0)$ but $\WFsg{\psi e} (\tilde u)=\emptyset$, again due to closedness.
\end{ex}
\bibliography{Bibliografia}
\bibliographystyle{amsalpha}
\end{document}